\tikzset{commutative diagrams/.cd}
\newtheorem{theorem}{Theorem}[subsection]
\newtheorem{corollary}[theorem]{Corollary}
\newtheorem{lemma}[theorem]{Lemma}
\newtheorem{proposition}[theorem]{Proposition}
\theoremstyle{definition}
\newtheorem{definition}[theorem]{Definition}
\newtheorem{claim}[theorem]{Claim}
\newtheorem{example}[theorem]{Example}
\newtheorem{remark}[theorem]{Remark}
\newtheorem{notation}[theorem]{Notation}
\newcommand{\E}{\mathcal{E}}
\newcommand{\D}{\mathcal{D}}
\newcommand{\Q}{\mathbf{Q}}
\newcommand{\Sc}{\op{Sch}}
\newcommand{\Z}{\mathcal{Z}}
\newcommand{\T}{\mathcal{T}}
\renewcommand{\P}{\mathbf{P}}
\newcommand{\op}[1]{\operatorname{#1}}
\newcommand{\U}{\mathcal{U}}
\newcommand{\Schf}{\op{Sch}_{fd}}
\newcommand{\F}{\mathcal{F}}
\renewcommand{\O}{\mathcal{O}}
\newcommand{\niI}{\underline n}
\newcommand{\ep}{\epsilon}
\newcommand{\dd}{\delta}
\newcommand{\I}{\mathscr{I}}
\newcommand{\A}{\mathbf{A}}
\newcommand{\Ca}{\mathcal{C}}
\newcommand{\X}{\mathcal{X}}
\newcommand{\bb}{\bullet}
\numberwithin{subsection}{section}
\newcommand{\Sp}{\op{Spec}}
\newcommand{\Set}{\op{Sets}}
\newcommand{\J}{\mathcal{J}}
\newcommand{\M}{\mathcal{M}}
\newcommand{\sset}{\op{Set}_{\Delta}}
\newcommand{\V}{\mathcal{V}}
\renewcommand{\U}{\mathcal{U}}
\newcommand{\bx}{\square}
\newcommand{\Nst}{N^D_{\bb}(\op{Nis-locSt})}
\newcommand{\Nstalg}{N(\op{Algsp})}
\newcommand{\SHe}{\mathcal{SH}_{\op{ext}}}
\newcommand{\Sheshr}{\mathcal{SH}_{\op{ext}!}}
\newcommand{\Sheshrup}{\mathcal{SH}^!_{\op{ext}}}
\newcommand{\Shext}{{\mathcal{SH}}^{\otimes}_{\op{ext}}}
\newcommand{\SAlg}{{\mathcal{SH}}^{\otimes}_{\op{Algsp}}}
\newcommand{\Sho}{\mathcal{SH}^{\otimes}}
\newcommand{\SH}{\mathcal{SH}}
\newcommand{\Y}{\mathcal{Y}}
\title{Motivic Homotopy Theory of Algebraic Stacks}
\author{Chirantan Chowdhury}
\date{\today}
\begin{document}

\maketitle{}
\begin{abstract}
 	 The aim of this paper is to extend the definition of motivic homotopy theory from schemes to a large class of algebraic stacks and establish a six functor formalism. The class of algebraic stacks that we consider includes many interesting examples: quasi-separated algebraic spaces, local quotient stacks and moduli stacks of vector bundles. We use the language of $\infty$-categories developed by Lurie. Morever, we use the so-called 'enhanced operation map' due to Liu and Zheng to extend the six functor formalism from schemes to our class of algebraic stacks. We also prove that six functors satisfy properties like homotopy invariance, localization and purity.
	
\end{abstract}	
\tableofcontents

\section{Introduction}
	
	The six functor formalism was formulated by Grothendieck to give a framework for the basic operations and duality statements for cohomology theories. In brief, a six functor formalism is a theory of coefficient systems relative to any scheme with a collection of six functors $f^*,f_*,f^!,f_!,\otimes, \op{Hom}$ which satisfy a set of relations. This formalism is usually formulated in the language of triangulated categories.  In \cite{MorVod}, Morel and Voevodsky define the general theory of $\A^1$-homotopy theory of schemes which incorporates homotopy theory in the field of algebraic geometry. To a scheme $S$, they associate a triangulated category $\op{SH}(S)$ which is defined by applying $\A^1$-localization and $\P^1$-stabilization to the category of simplicial Nisnevich sheaves. Voevodsky and Ayoub (\cite{Ay1} and \cite{Ay2}) constructed a six functor formalism of $\A^1$-homotopy theory. In this paper, we extend the definition of $\SH$  to a large class of algebriac stacks and provide a six functor formalism for $\SH$ using the language of $\infty$-categories developed by Lurie (\cite{HTT} and \cite{HA}).\\
	
	In order to motivate the need of language of $\infty$-categories, let us recall the six functor formalism of derived categories of $\ell$-adic sheaves over an algebraic stack. To an algebraic stack $\X$, one can try to define the derived category of the algebraic stack $\X$ as derived category of $\ell$-adic \'etale sheaves over $\X$. Let $\X =B\mathbf{G}_m$. With this definition, we would get that the derived category of $B\mathbf{G}_m$ is the derived category of $\mathbf{G}_m$-equivariant $\ell$-adic \'etale sheaves over a point. As the connected group $\mathbf{G}_m$ cannot act non-trivially on locally constant sheaves, this is equivalent to the category of sheaves over a point. This definition does not give the result as one expects for the classifying stack of bundles as we have  \[ H^*(B\mathbf{G}_m) \cong \mathbf{Q}_l[c] \] where $c$ is in degree $2$ (\cite{totarochowring}).\\
	
	In \cite{lazloolsson1} and \cite{lazloolsson2}, Laszlo and Olsson define derived categories of algebraic stacks and construct the six functor formalism using the lisse-\'etale topos. They use simplicial methods to to construct the derived category that gives the expected answer for the cohomology of $B\mathbf{G}_m$. The fact that the lisse-\'etale topos is not functorial makes the construction of derived pullback technical. The language of $\infty$-categories allows us to circumvent this problem. \\
	
	In \cite{liu2017enhanced}, Liu and Zheng construct a six functor formalism of derived $\infty$-categories of $\ell$-adic sheaves for any algebraic stack. To any scheme $X$, the derived $\infty$-category $\D_{et}(X,\Q_l)$ is the $\infty$-categorical enhancement of the usual derived category. The major advantage of the $\infty$-categorical language is that the derived $\infty$-category satisfies \'etale descent. For any algebraic stack $\X$, the $\infty$-category $\D_{et}(\X,\Q_l)$ constructed by Liu and Zheng is isomorphic to the limit of derived $\infty$-categories over \v{C}ech nerve of any atlas $x: X \to \X$. In other words, we have
	
	\begin{equation}\label{derivedequation}
		\D_{et}(\X,\Q_l) \cong \op{lim}\Big( \begin{tikzcd}
			\D_{et}(X,\Q_l) \arrow[r, shift left =2] \arrow[r, shift right =2] & \D_{et}(X \times_{\X} X,\Q_l) \arrow[l] \arrow[r, shift right =4] \arrow[r] \arrow[r, shift left =4] & \D_{et}(X \times_{\X} X \times_{\X} X,\Q_l) \arrow[l, shift left=2] \arrow[l, shift right =2] & \cdots
		\end{tikzcd}  \Big)
	\end{equation} 
	where the maps in the limit are the derived pullback maps.
	Their construction uses abstract descent theory of the language of $\infty$-categories. This also allows to construct the pullback functor in a canonical way. Morever, they prove that their formalism agrees with the one introduced by Laszlo and Olsson once one passes to homotopy categories of the derived $\infty$-categories. Thus the language of $\infty$-categories seem advantageous to extend $\infty$-sheaves from schemes to algebraic stacks. We shall use a similar technique in our setting of motivic homotopy theory but in this case extra care is needed because motivic invariants usually do not satisfy \'etale descent.\\
	
	To a scheme of finite Krull dimension $S$, the motivic stable homotopy category $\Sho(S)$ is a presentable stable symmetric monoidal $\infty$-category (we refer to \cite{robalo2013noncommutative} for the notations). The functorial assignment makes $\Sho$ into a functor 
	\begin{equation}
		\Sho: N(\Schf)^{op} \to \op{CAlg}(\op{Pr}^L_{stb})
	\end{equation}
	where the target is the $\infty$-category of stable presentable symmetric monoidal $\infty$-categories. As mentioned above, we cannot use equation \cref{derivedequation} as a definition of $\Sho$ for an algebraic stack because $\Sho$ does not satisfy \'etale descent and thus \cref{derivedequation} would depend on the choice of the atlas $X$. We resolve this problem by specifying a class of smooth atlases for which we can prove descent.
	The  resulting class of $(2,1)$-category of algebraic stacks \textbf{$\op{Nis-locSt}$} consists of algebraic stacks which admit an atlas admitting Nisnevich-local sections. This includes all quasi-separated algebraic spaces, quotient stacks $[X/G]$ where $G$ is an affine algebraic group, local quotient stacks, the moduli stack of vector bundles $\op{Bun}_n$, the moduli stack of $G$-bundles $\op{Bun}_G$ and moduli space of stable maps. Using the formulation of enhanced operation map (\cite{liu2017enhanced}), we also manage to extend the six functor formalism from schemes to $\op{Nis-locSt}$. Our main result is as follows (see \cref{thmsixopext} for a complete statement):\\
	\begin{theorem}
		The functor $\Sho(-)$ extends to a functor  \[ \Shext: N^D_{\bb}(\op{Nis-locSt})^{op} \to \op{CAlg}(\op{Pr}^L_{stb}). \] Morever,
		\begin{enumerate}
			\item For any $ \X \in \op{Nis-locSt}$, there exist functors $\otimes, \op{Hom}: \SHe(\X) \times \SHe(\X) \to \SHe(\X)$.
			\item For any morphism  $ f: \X \to \Y$ in $\op{Nis-locSt}$, there is a pair of adjoint functors 
			\[ f^*: \SHe(\Y) \to \SHe(\X) ~,~ f_*: \SHe(\X) \to \SHe(\Y). \]
			\item For a morphism $ f: \X \to \Y$ in $\op{Nis-locSt}$ which is separated of finite type and representable by algebraic spaces, there is a pair of adjoint functors 
			\[ f_!: \SHe(\X) \to \SHe(\Y) ~,~ f^!: \SHe(\Y) \to \SHe(\X). \] 
		\end{enumerate}  
		These functors restrict to the known functors on the category of schemes. Furthermore, the projection formula, base change, localization, homotopy invariance and purity extend to $\op{Nis-locSt}$. 
	\end{theorem}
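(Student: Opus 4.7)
The proof strategy closely parallels the construction of $\ell$-adic sheaves on stacks carried out in \cite{liu2017enhanced}, but with the étale topology replaced throughout by the Nisnevich topology. The key starting point is that $\Sho$, regarded as a presheaf of symmetric monoidal presentable stable $\infty$-categories on $N(\Schf)^{op}$, satisfies Nisnevich descent; equivalently, for any Nisnevich hypercover $U_\bullet \to X$ the canonical map $\Sho(X) \to \op{lim}_{\Delta} \Sho(U_\bullet)$ is an equivalence in $\op{CAlg}(\op{Pr}^L_{stb})$. This is what replaces the étale descent used by Liu and Zheng and dictates our choice of the class $\op{Nis-locSt}$.

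Given this, the plan for defining $\Shext$ on objects is to set, for $\X \in \op{Nis-locSt}$ with a chosen smooth atlas $x \colon X \to \X$ admitting Nisnevich-local sections,
\[ \Shext(\X) := \op{lim}\Big( \Sho(X) \rightrightarrows \Sho(X \times_\X X) \to \cdots \Big), \]
the limit being taken over the Čech nerve of $x$. I would first verify, using Nisnevich descent together with the defining condition of $\op{Nis-locSt}$, that this limit is independent of the chosen atlas and depends functorially on $\X$; this is precisely where the Nisnevich-local sections hypothesis is essential, because refining any two such atlases by their fibre product is Nisnevich-local on either side. Making this assignment functorial in morphisms of stacks is not pointwise: one should construct $\Shext$ as a right Kan extension of $\Sho$ along the inclusion $N(\Schf)^{op} \hookrightarrow N^D_\bb(\op{Nis-locSt})^{op}$, with the Kan extension computed along Čech nerves of atlases. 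The $\infty$-categorical formalism of \cite{HTT} furnishes the required coherences.

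To obtain the full six functor formalism rather than only $f^*$, I would extend not $\Sho$ alone but the enhanced operation map $\op{EO}(\Sho)$ of \cite{liu2017enhanced}: a single functor out of a certain $\infty$-category of correspondences in $\Schf$ that simultaneously packages $f^*$, $f_*$, $f_!$, $f^!$ together with the symmetric monoidal structure and all compatibilities. Applying the same descent/Kan extension procedure to $\op{EO}(\Sho)$ along Čech nerves of Nisnevich-local atlases yields the extended enhanced operation map on $\op{Nis-locSt}$; extracting its components produces each of the six functors with the projection formula and base change built in from the shape of the correspondence category. The restriction of the exceptional adjoint pair $(f_!, f^!)$ to morphisms that are separated of finite type and representable by algebraic spaces is dictated by the source of $\op{EO}$ and by the fact that such morphisms are preserved by Nisnevich-local base change.

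The main obstacle, and the step requiring the most care, is ensuring that the descent data assembled by the enhanced operation map is genuinely compatible with the classes of morphisms appearing in the source of $\op{EO}$ (smooth, separated of finite type, representable by algebraic spaces). Unlike the étale situation of Liu–Zheng, one has to keep track of when a Nisnevich-local section of an atlas survives the fibre products that show up in the Čech nerve of a morphism of stacks; this combinatorial bookkeeping is what forces the class $\op{Nis-locSt}$ to be stable under the relevant operations. Once descent is in place, homotopy invariance and localization on stacks follow essentially formally by reducing to the atlas level, while purity requires upgrading the scheme-level purity equivalence in a way compatible with the descent diagram; I expect purity to be the technical heart of the argument on stacks.
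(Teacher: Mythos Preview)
Your overall strategy is correct and matches the paper's: extend $\Sho$ to $\op{Nis-locSt}$ via \v{C}ech nerves of atlases admitting Nisnevich-local sections, then extend the enhanced operation map to obtain $f_!,f^!$ together with base change and projection formula, and finally verify localization, homotopy invariance, and purity by reduction to atlases.

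Two points of difference are worth flagging. First, the paper does not construct $\Shext$ as a right Kan extension along $N(\Schf)^{op}\hookrightarrow N^D_{\bb}(\op{Nis-locSt})^{op}$. Instead it introduces an auxiliary $\infty$-category $\op{Cov}(\op{St}_{\Ca})$ of \v{C}ech nerves of atlases, proves that the projection $p\colon \op{Cov}(\op{St}_{\Ca})\to N^D_{\bb}(\op{St}_{\Ca})$ is a localization at refinements of coverings, and then descends the functor through this localization. The paper does afterwards show that the result agrees with the limit-extended construction of Khan--Ravi over the lisse site $\op{Lis}_{\X}$, so your Kan-extension picture is not wrong in spirit; but note that a naive right Kan extension along the inclusion of \emph{all} schemes (not just smooth ones over $\X$) would not obviously recover the \v{C}ech-nerve limit, so your parenthetical ``computed along \v{C}ech nerves of atlases'' is doing real work that you would still need to justify. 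Similarly, the enhanced operation map in the paper is not framed via correspondences but via a bisimplicial/multi-tiled construction $\dd^*_{2,\{2\}}\op{Fun}(\Delta^1,N(\Schf))^{\op{cart}}_{F,\op{ALL}}\to\op{Mod}(\op{Pr}^L_{stb})$, and its extension to stacks again goes through a localization-of-coverings argument rather than a Kan extension.

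Second, your assessment of where the difficulty lies is inverted. In the paper, purity is \emph{not} the technical heart: once one knows that pullback along an atlas $x\colon X\to\X$ is conservative (an immediate consequence of the limit description), the purity transformation $\rho_f$ is constructed exactly as for schemes and checked to be an equivalence by pulling back to $X$. Localization and homotopy invariance are handled the same way. The genuine technical weight is in the two extension steps: proving that $p$ is a localization (so that $\Shext$ is well-defined and functorial), and then repeating this for the enhanced operation map, where the source is no longer an ordinary category but a diagonal of a cartesian bisimplicial set. That bookkeeping, rather than purity, is where the argument earns its keep.
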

	
Hoyois defines $\op{SH}$ for quotient stacks of the form $[X/G]$ where $G$ is tame (\cite{Equsixop}). His construction apriori depends on the presentation of the stack. Our construction allows us to drop the tameness assumption for quotient stacks and provides a version of $\SH$ that does not depend on the choice of a presentation. \\
Khan and Ravi define limit-extended cohomology theories for derived stacks (\cite[Section 12]{khan2021generalized}). We believe that our construction agrees with their limit extended construction (see \cref{limitextendedcomparison} for more details).\\
	
	We now give a brief outline of the chapters in the thesis.
	
	\begin{enumerate}
		
		\item In Section 2, we enhance the motivic stable homotopy functor from schemes to algebraic stacks. This follows from \cref{thmmain} which is set in an abstract setup of categories of stacks admitting $\T$-local sections (\cref{stacklocsecdef}). The key example of categories of stacks admitting $\T$-local sections is the $(2,1)$-category $\op{Nis-locSt}$. It is important to note that \cref{thmmain} is a special case of \cite[Proposition 4.1.1]{liu2017enhanced}. We give a new proof of the theorem which is partly inspired from the proof of Liu and Zheng. The extension theorem also allows to construct the four functors $f_*,f^*, \op{Hom}$ and $\otimes$. 
		\item In Section 3, we construct the exceptional functors $f_!$, $f^!$ and also prove base change and projection formulas. This is proved by the so called enhanced operation map introduced by Liu and Zheng in \cite{liu2017enhanced}. In short, we extend the enhanced operation from schemes to $\op{Nis-locSt}$ in the setting of categories of stacks admitting $\T$-local sections (\cref{thmmain2}). The extension is a special case of the DESCENT program (\cite[Chapter 4]{liu2017enhanced}) and we give a new proof of the extension of the enhanced operation from schemes $\op{Nis-locSt}$ which gives us the exceptional functors, projection formula and base change.
		\item In Section 4, we prove the other relations of the six functors namely smooth and proper base change, localization, homotopy invariance and purity. We then collect all the results in a single theorem (\cref{thmsixopext}).
		
	\end{enumerate} 
		\subsection*{Acknowledgments}
		The work presented in this paper was carried during author's PhD thesis at the University of Duisburg-Essen supported by SPP 1786 "Homotopy Theory and Algebraic Geometry". The author would like to thank his supervisor Jochen Heinloth for his invaluable advice, continuous support and patience during the PhD study. The author would also like to thank Marc Levine and Markus Spitzweck for valuable comments regarding the paper. The author expresses his gratitude to all members of ESAGA group in Essen. The paper was completed while the author was a Post Doc at the University of Duisburg-Essen, supported by the ERC Grant "Quadratic Refinements in Algebraic Geometry".

\section{Enhancement of sheaves along coverings with local sections}
In this section, we prove \cref{thmmain} which enables us to extend sheaves from schemes to algebraic stacks. This key point is that for cohomology theories like $\SH$, descent may not be true for general smooth morphisms, which are used as atlases for algebraic stacks. But if the atlas admits local sections in a topology that is coarser than the smooth topology, then it is plausible to extend sheaves from schemes to a large class of Artin stacks.\\
At first, we introduce the notion of $\T$- local sections associated to a site $(\Ca,\T)$.  Then we define a $(2,1)$-category called "category of stacks admitting $\T$-local sections". This allows us to introduce the $(2,1)$-category $\op{Nis-locSt}$ for which we extend cohomology theories like $\SH$. The abstract formalism of category of stacks admitting $\T$-local sections help us to prove the \cref{thmmain} and define the stable homotopy theory on $\op{Nis-locSt}$.

\subsection{Morphisms admitting $\T$-local sections}
	
	Let $\Ca$ be a category admitting products and small coproducts equipped with a Grothendieck topology $\T$. For any $Y \in \Ca$, let $\op{Cov}(Y)$ be the collection of coverings of $Y$.

	\begin{definition}
		A morphism $f: X \to Y$ in $\Ca$ admits \textit{$\T$-local sections}  if there exists a family $\{ p_i: Y_i \to Y\}_{i \in I} \in \op{Cov}(Y)$  and morphisms $s_i: Y_i \to X$ such that the  diagram 
		
		\begin{equation}
			\begin{tikzcd}
				{} & X \arrow[d,"f"] \\
				\coprod_i Y_i \arrow[ur,"\coprod s_i"] \arrow[r,"\coprod p_i"] & Y
			\end{tikzcd}
		\end{equation}
		commutes.

	\end{definition}

	\begin{example}\label{smthetale}
		In the category of schemes equipped with  the \'etale topology, any smooth surjective morphism admits \'etale local sections. This follows from \cite[\href{https://stacks.math.columbia.edu/tag/055U}{Tag 055U}]{stacks-project}.
	\end{example}
	
	\begin{notation}
	    We shall abbreviate the coverings $\coprod_i Y_i$ by $\tilde{Y}$ in the proofs. 
	\end{notation}
	
	\begin{lemma}\label{tlocalpullbackinsch}
		Morphisms admitting $\T$-local sections are stable under pullbacks and compositions. 
	\end{lemma}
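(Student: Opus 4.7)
The plan is to treat the two stability properties separately, constructing the required coverings and sections by routine applications of the universal property of the pullback and of the axioms for a Grothendieck topology. Let me denote a covering witnessing that a morphism admits $\T$-local sections by the pair (covering, sections).

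For pullback stability, suppose $f \colon X \to Y$ admits $\T$-local sections via $\{p_i \colon Y_i \to Y\}_{i \in I}$ with sections $s_i \colon Y_i \to X$, and let $g \colon Z \to Y$ be arbitrary with pullback $f' \colon X \times_Y Z \to Z$. Since $\T$ is a Grothendieck topology, the family $\{p'_i \colon Y_i \times_Y Z \to Z\}_{i \in I}$ obtained by pulling back along $g$ is a covering of $Z$. For each $i$, the universal property of $X \times_Y Z$ applied to the pair of morphisms $s_i \circ \op{pr}_{Y_i} \colon Y_i \times_Y Z \to X$ and $\op{pr}_Z \colon Y_i \times_Y Z \to Z$ (which agree over $Y$ because $f \circ s_i = p_i$ matches the other projection to $Y$) produces a canonical lift $s'_i \colon Y_i \times_Y Z \to X \times_Y Z$. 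One checks directly that $f' \circ s'_i = p'_i$, so $(\{p'_i\}, \{s'_i\})$ exhibits the required local sections for $f'$.

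For stability under composition, suppose $f \colon X \to Y$ admits $\T$-local sections via $(\{p_i \colon Y_i \to Y\}, \{s_i\})$ and $g \colon Y \to Z$ admits $\T$-local sections via $(\{q_j \colon Z_j \to Z\}, \{t_j\})$. Pulling the covering $\{p_i\}$ back along each $t_j$ yields, for every $j$, a covering $\{Y_i \times_Y Z_j \to Z_j\}_{i \in I}$. Composing with $\{q_j\}$ and invoking the axiom that Grothendieck topologies are stable under composition of coverings, the collection $\{\pi_{ij} \colon Y_i \times_Y Z_j \to Z\}_{(i,j) \in I \times J}$ is a covering of $Z$. Define $\sigma_{ij} := s_i \circ \op{pr}_{Y_i} \colon Y_i \times_Y Z_j \to X$. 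Then
\[
g \circ f \circ \sigma_{ij} \;=\; g \circ p_i \circ \op{pr}_{Y_i} \;=\; g \circ t_j \circ \op{pr}_{Z_j} \;=\; q_j \circ \op{pr}_{Z_j} \;=\; \pi_{ij},
\]
where the second equality uses the commutativity of the pullback square defining $Y_i \times_Y Z_j$ (both projections become the same map to $Y$) and the third equality uses $q_j = g \circ t_j$. Thus $(\{\pi_{ij}\}, \{\sigma_{ij}\})$ witnesses that $g \circ f$ admits $\T$-local sections.

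There is no serious obstacle; the entire argument is universal-property bookkeeping. The only point that must be invoked about $\T$ (beyond being a topology) is stability of coverings under pullback and closure under composition, both of which are part of the definition of a Grothendieck topology. The existence of fibre products needed to form $Y_i \times_Y Z$ and $Y_i \times_Y Z_j$ is guaranteed by the ambient assumption that $\Ca$ admits products and small coproducts together with the fact that the relevant spans are given in $\Ca$.
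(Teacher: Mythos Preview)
Your proof is correct and follows essentially the same strategy as the paper: pull back the covering and use the universal property of the fibre product to produce the required sections. The only organizational difference is that for the composition case the paper first invokes the already-proven pullback stability (applying it to the pullback of $f$ along the section $t_j$) and then composes the two resulting coverings, whereas you unfold this step by hand and produce the doubly-indexed covering $\{Y_i \times_Y Z_j \to Z\}$ directly; the two arguments are the same once unpacked. One small imprecision: products and coproducts alone do not guarantee the fibre products you use, but the paper's own argument relies on the same fibre products without further comment, so you are on equal footing here.
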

	\begin{proof}
		\textbf{Stable under pullbacks}: Let $f: X \to Y$ be a morphism  admitting $\T$-local section and let $ g: Y' \to Y$ be a morphism in $\Ca$. We denote the pullback of $f$ along $g$ by $f': X':= X \times_Y Y' \to Y'$. We need to show that $f'$ admits a $\T$-local section. \\
		As $f$ admits $\T$-local sections, there exists a covering $p: \tilde{Y} \to Y$ and a morphism $s: \tilde{Y} \to X$ such that $ p= f \circ s$.\\
		Then  $p':\tilde{Y'}:= Y' \times_Y \tilde{Y} \to Y'$ is a covering and $s':=\op{id} \times s: \tilde{Y'} \to X'$ satisfies $p' =f' \circ s'$. Thus $f'$ admits $\T$-local sections.\\
		
		\textbf{Stable under compositions}: Let $ f: X \to Y$ and $ g: Y \to Z$ be morphisms admitting $\T$-local sections. We need to show that $ g \circ f$ admits $\T$-local sections. As $g$ admits $\T$-local sections, we have a covering $q:\tilde{Z} \to Z$ and a section $s: \tilde{Z} \to Y$.\\
		By $(1)$, the morphism $f':= (Z'= \tilde{Z}\times_Y X) \to \tilde{Z}$ admits $\T$-local sections. So there exists a covering $p: \tilde{Z''} \to \tilde{Z}$ admitting a section $s'': \tilde{Z''} \to \tilde{Z'}$.\\
		
		These morphisms give rise to a commutative diagram 
		\begin{equation}
			\begin{tikzcd}
				{} & {} & X \arrow[d,"f"] \\
				{} & \tilde{Z'}  \arrow[d,"f'"] \arrow[ur,"s'"] & Y \arrow[d,"g"]\\
				\tilde{Z''} \arrow[r,"p"] \arrow[ur,"s''"] & \tilde{Z} \arrow[ur,"s"] \arrow[r,"q"] & Z,  	
			\end{tikzcd}
		\end{equation} 
		i.e. $s' \circ s''$ is a $\T$-local section of $g \circ f$.
		
	\end{proof}
	\begin{corollary}
		The category $\Ca$ with the set of coverings as
		\[\op{Cov}_{\op{\T-{loc}}}(X):= \{ \{x_i: X_i \to X\} | \coprod_i x_i ~\text{admits}~\T-\text{local sections} \} \] defines a site.
	\end{corollary}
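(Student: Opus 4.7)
The plan is to verify the three Grothendieck pretopology axioms for $\op{Cov}_{\op{\T-{loc}}}$: that every isomorphism gives a covering, that coverings are stable under pullback, and that coverings compose. The isomorphism axiom is immediate, since for $f \colon X' \to X$ an isomorphism one takes $\{\op{id}_X\}$ as a $\T$-covering of $X$ and $f^{-1}$ as the section. For pullback stability, given a covering $\{X_i \to X\}_{i \in I} \in \op{Cov}_{\op{\T-{loc}}}(X)$ with $\coprod_i X_i \to X$ admitting $\T$-local sections, and a morphism $g \colon Y \to X$, I use that pullback distributes over small coproducts in $\Ca$ to identify $\coprod_i (X_i \times_X Y) \cong (\coprod_i X_i) \times_X Y$, after which the pullback-stability clause of \cref{tlocalpullbackinsch} delivers the required $\T$-local sections for $\coprod_i (X_i \times_X Y) \to Y$.

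For transitivity, given $\{X_i \to X\}_{i \in I} \in \op{Cov}_{\op{\T-{loc}}}(X)$ and, for each $i$, a covering $\{Y_{ij} \to X_i\}_{j \in J_i} \in \op{Cov}_{\op{\T-{loc}}}(X_i)$, I factor the total map as
\[ \coprod_{i,j} Y_{ij} \longrightarrow \coprod_i X_i \longrightarrow X. \]
The right-hand arrow admits $\T$-local sections by hypothesis. The left-hand arrow is the coproduct over $i$ of the maps $\coprod_j Y_{ij} \to X_i$, each of which admits $\T$-local sections; disjointly unioning the associated $\T$-coverings and their sections shows that such a coproduct of morphisms admitting $\T$-local sections again admits $\T$-local sections. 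The composition clause of \cref{tlocalpullbackinsch} then finishes the argument.

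The only real subtlety is the coproduct bookkeeping: namely, confirming that pullback distributes over coproducts in $\Ca$, and that a coproduct of morphisms admitting $\T$-local sections still admits them. Both are formal once unwound, using that coproducts of $\T$-coverings are $\T$-coverings (a standard feature of the motivating topologies) and that $\T$-local sections may be assembled componentwise. With these routine checks in hand, the corollary reduces directly to \cref{tlocalpullbackinsch}.
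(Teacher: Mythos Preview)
Your proof is correct and follows the same route as the paper: both reduce the site axioms to \cref{tlocalpullbackinsch}. The paper's proof is a two-line sketch (identities admit $\T$-local sections; stability under pullback and composition handles the rest), whereas you spell out the coproduct bookkeeping---distributivity of pullback over coproducts and the componentwise assembly of $\T$-local sections for a coproduct of morphisms---that the paper leaves implicit.
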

	\begin{proof}
		By definition, identity morphisms admit $\T$-local sections. As morphisms admitting $\T$-local sections are stable under pullbacks and compositions (\cref{tlocalpullbackinsch}), we see that $\Ca$ with the coverings $\op{\T-loc}$ forms a site.
	\end{proof}
 It turns out that sheaves satisfy descent along morphisms that admit $\T$-local sections. In order to prove such a statement, let us recall the notion of $F$-descent.
		\begin{definition} \label{fdescent} \cite[Defintion 3.1.1]{liu2017enhanced}
		Let $\Ca$ be an $\infty$-category which admits pullbacks. $F: \Ca^{op} \to \D$ be a functor of $\infty$-categories and $f: X_0^{+} \to X_1^+$ be a morphism in $\Ca$. Then $f$ satisfies \textit{$F$-descent} if 
		\[ F \circ (X^{+}_{\bb})^{op}: N(\Delta_+) \to \D  \] is a limit diagram where $X^{+}_{\bb}$ is the \v{C}ech nerve of $f$.
	\end{definition}
	The following lemma shall be used to prove descent statements.
		\begin{lemma}\label{equivprodlemma}\cite[Lemma 4.2.2]{liu2017enhanced}
		Let $\D$ be an $\infty$-category which admits products. Let 
		\begin{center}
			\begin{tikzcd}
				D_3 \arrow[r,"f_{32}"] \arrow[d,"f_{31}"] & D_2 \arrow[d,"f_{20}"] \\D_1 \arrow[r,"f_{10}"] & D_0 
			\end{tikzcd}
		\end{center}
		be a commutative diagram in $\Ca$.  Let $F: \D^{op} \to \D'$ be a functor of infinity categories. Assume the following:
		\begin{enumerate}
			\item $f_{32}$ and $f_{31}$ satisfies $F$-descent. 
			\item $f_{20}$ satisfies $F$-descent.
		\end{enumerate}
		Then $f_{10}$ satisfies $F$-descent. Also, we have:
		
		\begin{equation}\label{equivprodlim}\op{lim}_{n \in \Delta^{op}} F(D_{1n}) \xrightarrow{\cong} \op{lim}_{n \in \Delta^{op}}F(D_{3n})  \xleftarrow{\cong} \op{lim}_{m \in \Delta^{op}} F(D_{2m}) 
		\end{equation}
		where $D_{1n}:= D_1^{\times n}$ over $D_0$, $D_{2m}:= D_2^{\times m} $ over $D_0$ and $D_{3n}:= D_{1n} \times_{D_0} D_{2n}$.
	\end{lemma}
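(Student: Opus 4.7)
The plan is to establish the chain of equivalences \cref{equivprodlim}, from which $F$-descent of $f_{10}$ follows immediately by combining with hypothesis $(2)$. The central construction is a bisimplicial diagram $D_{\bullet,\bullet}$ defined by $D_{n,m} := D_{1n} \times_{D_0} D_{2m}$, whose bottom row recovers $D_{1\bullet}$, whose leftmost column recovers $D_{2\bullet}$, and whose diagonal is $D_{3\bullet}$. The projections $D_{3n} \to D_{1n}$ and $D_{3n} \to D_{2n}$ built into the statement correspond to the two natural projections from this fiber product.

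First I would identify, for each fixed $n$, the column $D_{n,\bullet}$ with the \v{C}ech nerve of the base change of $f_{20}$ along the canonical morphism $D_{1n} \to D_0$, and symmetrically for rows. The substantive intermediate step is then to argue that each such base change of $f_{20}$ satisfies $F$-descent: for $n = 0$ this is hypothesis $(2)$; for $n = 1$ it would come from the $F$-descent of $f_{31}: D_3 \to D_1$ combined with the canonical comparison $D_3 \to D_1 \times_{D_0} D_2$; and for $n \geq 2$ one can iterate using the decomposition $D_{1n} \cong D_{1,n-1} \times_{D_0} D_1$. Symmetrically, the hypothesis on $f_{32}$ should yield $F$-descent of the base change of $f_{10}$ along $D_{2m} \to D_0$ for every $m$.

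Once these pointwise descent statements are in place, applying $F$ to $D_{\bullet,\bullet}$ and invoking Fubini for limits indexed by $\Delta^{op} \times \Delta^{op}$ yields
\[ \lim_n F(D_{1n}) \simeq \lim_{(n,m)} F(D_{n,m}) \simeq \lim_m F(D_{2m}). \]
The middle expression in \cref{equivprodlim} is then identified with these via a cofinality argument for the diagonal $\Delta^{op} \hookrightarrow \Delta^{op} \times \Delta^{op}$ specialized to the \v{C}ech-nerve situation, a standard fact in $\infty$-category theory. Combining the resulting equivalences with the $F$-descent of $f_{20}$, one deduces $F(D_0) \simeq \lim_n F(D_{1n})$, i.e. $F$-descent of $f_{10}$.

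The main obstacle I anticipate is the bootstrap step: the hypotheses supply descent of $f_{31}, f_{32}$ only at the ``first level'', while the argument needs descent of the base change of $f_{20}$ along every $D_{1n} \to D_0$ (and dually for $f_{10}$). Propagating descent upward along these iterated fiber products is where one must carefully exploit the multiplicative structure of \v{C}ech nerves and the stability of $F$-descent under this particular class of base changes; an inductive argument (perhaps even invoking a simpler instance of the lemma itself) seems unavoidable.
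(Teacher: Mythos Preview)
The paper does not supply its own proof of this lemma: it is quoted verbatim from \cite[Lemma 4.2.2]{liu2017enhanced} and used as a black box. So there is no in-paper argument to compare against.

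That said, your bisimplicial strategy---form $D_{n,m}=D_{1n}\times_{D_0}D_{2m}$, apply $F$, use Fubini for $\Delta\times\Delta$-limits, and invoke cofinality of the diagonal $\Delta\hookrightarrow\Delta\times\Delta$ to recover $\lim_n F(D_{3n})$---is exactly the standard proof of this type of statement and is certainly what Liu--Zheng do. Your identification of the columns (resp.\ rows) with \v{C}ech nerves of base changes of $f_{20}$ (resp.\ $f_{10}$) is correct, and in the Cartesian case $D_3\cong D_1\times_{D_0}D_2$ the $n=0$ column is literally the \v{C}ech nerve of $f_{31}$ and the $m=0$ row that of $f_{32}$, so hypothesis (1) handles those directly.

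The bootstrap concern you raise is genuine and well spotted: $F$-descent is \emph{not} stable under arbitrary base change, so passing from descent of $f_{31}$ to descent of its pullback along $D_{1n}\to D_1$ for $n\ge 1$ requires something more. In the paper's actual applications (\cref{covlocsecdesc}, the proof of \cref{thmmain}) the square is always Cartesian and the morphisms in play admit $\T$-local sections, a property stable under base change; hence every $g_n\colon D_{1n}\times_{D_0}D_2\to D_{1n}$ inherits $F$-descent for the same reason $f_{31}$ does, and the induction is immediate. If you want a proof valid at the generality literally stated here, you should either strengthen hypothesis (1) to universal $F$-descent of $f_{31},f_{32}$ (which is how the result is typically formulated, and how it is used in this paper), or consult Liu--Zheng directly for their precise hypotheses.
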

	
	\begin{proposition} \label{covlocsecdesc}
		Let $F: N(\Ca)^{op} \to \D$ an $\infty$-sheaf. Then $F$ satisfies descent along morphisms that admit $\T$-local sections.
	\end{proposition}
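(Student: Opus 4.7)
The plan is to apply \cref{equivprodlemma} to the pullback square arising from the $\T$-local section. Let $f: X \to Y$ be a morphism admitting $\T$-local sections, so we have a $\T$-covering $p: \tilde Y \to Y$ and a section $s: \tilde Y \to X$ with $f \circ s = p$. Form the pullback square
\[
\begin{tikzcd}
\tilde Y \times_Y X \arrow[r,"f'"] \arrow[d,"p'"] & \tilde Y \arrow[d,"p"] \\
X \arrow[r,"f"] & Y
\end{tikzcd}
\]
and invoke \cref{equivprodlemma} with $D_0 = Y$, $D_1 = X$, $D_2 = \tilde Y$, $D_3 = \tilde Y \times_Y X$, $f_{10}=f$, $f_{20}=p$, $f_{31}=p'$, $f_{32}=f'$. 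The goal is to verify the two hypotheses of the lemma, namely that $f_{20}$ satisfies $F$-descent and that $f_{31}, f_{32}$ satisfy $F$-descent, after which the lemma will give $F$-descent for $f = f_{10}$.

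First I would handle $f_{20} = p$ and $f_{31} = p'$. Since $p$ is a $\T$-covering and $F$ is an $\infty$-sheaf, $p$ satisfies $F$-descent directly. The morphism $p'$ is the base change of $p$ along $f$, hence is also a $\T$-covering (Grothendieck topologies are stable under pullback), so again the sheaf hypothesis on $F$ yields $F$-descent for $p'$.

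The step I expect to be the main obstacle is $F$-descent for $f_{32} = f'$, since $f'$ need not be a $\T$-covering. Here I would use the section: by the universal property of the pullback, the pair $(\op{id}_{\tilde Y}, s)$ defines a morphism $\sigma: \tilde Y \to \tilde Y \times_Y X$ with $f' \circ \sigma = \op{id}_{\tilde Y}$ (using precisely the equality $p = f \circ s$). Thus $f'$ is a split epimorphism, and the augmented \v{C}ech nerve of a split epimorphism admits an extra degeneracy, making it a split augmented simplicial object. Split augmented simplicial objects are absolute colimit diagrams, so after applying the contravariant $F$ one obtains an absolute limit diagram in $\D$; in particular $f'$ satisfies $F$-descent for any functor $F$ whatsoever.

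With all three hypotheses verified, \cref{equivprodlemma} immediately yields that $f_{10} = f$ satisfies $F$-descent, which is the desired conclusion.
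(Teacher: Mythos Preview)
Your proof is correct and follows essentially the same approach as the paper: form the pullback square along the covering $p$, observe that the pulled-back map $f'$ has a section and hence a split \v{C}ech nerve (so satisfies $F$-descent for any $F$), use the sheaf condition for $p$ and its base change $p'$, and then apply \cref{equivprodlemma} to conclude $F$-descent for $f$. The only difference is notational.
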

	\begin{proof}
		Let $f: X \to Y$ be a morphism in $\Ca$ which admits $\T$-local sections. Thus there exists a covering $\varphi: \tilde{Y} '\to Y$  and a section $ s': \tilde{Y} \to \tilde{X}:= \tilde{Y} \times_{Y} X$  of $f': \tilde{X} \to \tilde{Y}$ such that the diagram
		
		\begin{center}
			\begin{tikzcd}
				\tilde{X} \arrow[r,"\varphi'"] \arrow[d,"f'"] & X \arrow[d,"f"]\\
				\tilde{Y}\arrow[r,"\varphi"] \arrow[u,bend left =80, "s'"] & Y \\
			\end{tikzcd}
		\end{center}
		commutes in $\Ca$.\\
		As $f'$ admits a section $s'$, the \v{C}ech nerve of $f$ is a split-simplicial object (\cite[Definition 4.7.2.2]{HA}). As split-simplicial objects are colimit diagrams (\cite[Lemma 6.1.3.16]{HTT}), $f'$ satisfies $F$-descent . \\
		As $F$ is  an $\infty$-sheaf, the horizontal arrows have $F$-descent. As $f',\varphi$ and $\varphi'$ satisfy $F$-descent, by \cref{equivprodlemma} we get that $f$ satisfies $F$-descent.
	\end{proof}
	\begin{remark}
		When $\Ca = \op{Sch}$ and $\T =\text{\'et}$, the above proposition gives us that the category of \'etale sheaves and category of smooth sheaves are equivalent. 
	\end{remark}
	\subsection{Categories of stacks admitting $\T$-local sections.}
	
	We want to extend sheaves on schemes to the $(2,1)$-category of algebraic stacks. This extension is a two step process. We first extend from schemes to algebraic spaces and then from algebraic spaces to algebraic stacks. In order to formalize the statements in a coherent manner, we define an abstract $(2,1)$-category $\op{St}_{\Ca}$ which incorporates the properties of algebraic spaces and algebraic stacks.\\
	
	\begin{definition}\label{stacklocsecdef}
		Let $\Ca$ be a category equipped with a Grothendieck topology $\T$. A \textit{category of stacks admitting $\T$-local sections} is a $(2,1)$-category $\op{St}_{\Ca}$ together with a fully faithful inclusion $i_{\Ca}: \Ca \hookrightarrow \op{St}_{\Ca}$ satisfying the following properties:
		\begin{enumerate}
			\item $\op{St}_{\Ca}$ admits fiber products and small coproducts.
			\item Given any object $ \X \in \op{St}_{\Ca}$, there exists a morphism $x: X \to \X$ with $X \in \Ca$ such that for any morphism $ y: X' \to \X$ in $\Ca$, the fiber product $ x': X' \times_{\X} X \to X'$ admits $\T$-local sections. We say that $x$ as an \textit{atlas admitting $\T$-local sections}.
			\item The diagonal $\X \to \X \times \X$ is representable in $\Ca$. 
		\end{enumerate}
		
	\end{definition}
	\begin{remark}
		Here representablity is understood as for algebraic stacks i.e. a morphism $ f: \X \to \Y$ in $\op{St}_{\Ca}$ if for all $ Y \to \Y$ with $Y \in \Ca$, the fiber product $\X \times_{\Y} Y$ is in $\Ca$. With this definition, the diagonal map being representable is equivalent of saying that any morphism $x: X \to \X$ where $X \in \Ca$ is representable. \\ As in the case of algebraic stacks, representable morphisms are stable under pullbacks.
	\end{remark}	
	
	\begin{definition}\label{repandtlocdef}
		Let $\op{St}_{\Ca}$ be a category of stacks admitting $\T$-local sections.\\
		A morphism $ f: \X \to \Y$ is said to admit \textit{$\T$-local sections} if there exists a an atlas $ y: Y \to \Y$ and a morphism $ s: Y \to \X$ such that $f \circ s = y$.	
		
	\end{definition}
	
	The following gives a simpler definition for morphisms admitting $\T$-local sections in the setting of representable morphisms.
	\begin{lemma}\label{tlocrep}
		A representable morphism $f: \X \to \Y$ in $\op{St}_{\Ca}$ admits $\T$-local sections iff for any morphism $ v: V \to \Y$ with $V \in \Ca$, the base change morphism $ f': V \times_{\Y} \X \to V$ is a morphism admitting $\T$-local sections in $\Ca$.
	\end{lemma}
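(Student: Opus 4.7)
The plan is to verify both implications directly, using only the universal property of pullbacks together with \cref{tlocalpullbackinsch}. Representability of $f$ enters implicitly, in that for any $V \to \Y$ with $V \in \Ca$ the fiber product $V \times_{\Y} \X$ lies in $\Ca$, so the phrase ``admits $\T$-local sections in $\Ca$'' for $f'$ makes sense.

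For the ``only if'' direction, assume $f: \X \to \Y$ admits $\T$-local sections, so by \cref{repandtlocdef} there is an atlas $y: Y \to \Y$ and a morphism $s: Y \to \X$ with $f \circ s = y$. Given any $v: V \to \Y$ with $V \in \Ca$, pulling back along $v$ yields the Cartesian square with $f': V \times_{\Y} \X \to V$ and $y_V: V \times_{\Y} Y \to V$, and the universal property produces $\sigma: V \times_{\Y} Y \to V \times_{\Y} \X$ satisfying $f' \circ \sigma = y_V$. Since $y$ is an atlas, $y_V$ admits $\T$-local sections in $\Ca$; composing any such section with $\sigma$ gives a $\T$-local section of $f'$.

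For the ``if'' direction, fix any atlas $y: Y \to \Y$, which exists by \cref{stacklocsecdef}(2). By hypothesis, $f': Y \times_{\Y} \X \to Y$ admits $\T$-local sections, so there is a $\T$-covering $p: \tilde{Y} \to Y$ together with $s: \tilde{Y} \to Y \times_{\Y} \X$ such that $f' \circ s = p$. I claim that $y \circ p: \tilde{Y} \to \Y$ is again an atlas. For any $v: V \to \Y$ with $V \in \Ca$, the base change $V \times_{\Y} \tilde{Y} \to V$ factors as $V \times_{\Y} \tilde{Y} \to V \times_{\Y} Y \to V$; the first map is the pullback of the $\T$-covering $p$ (hence admits $\T$-local sections), and the second admits $\T$-local sections because $y$ is an atlas. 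By \cref{tlocalpullbackinsch}, the composite admits $\T$-local sections, establishing the claim. The pullback identity $f \circ \operatorname{pr}_2 = y \circ f'$ then gives $f \circ (\operatorname{pr}_2 \circ s) = y \circ p$, so $\operatorname{pr}_2 \circ s: \tilde{Y} \to \X$ is a section of $f$ over the atlas $y \circ p$, as required.

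The proof is essentially formal and I expect no real obstacle; the only mild subtlety is that in the ``if'' direction one cannot reuse the initially chosen atlas $y$ directly, since the candidate section is produced only over the refinement $\tilde{Y}$, and one must verify that this refinement is itself an atlas in the sense of \cref{stacklocsecdef}(2) — which is immediate from the stability properties in \cref{tlocalpullbackinsch}.
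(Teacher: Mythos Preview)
Your proof is correct and follows essentially the same approach as the paper: both directions use the same pullback-and-compose argument, with the atlas pulled back and the induced section composed with a local section of the pulled-back atlas. You are in fact slightly more careful than the paper in the ``if'' direction, explicitly verifying via \cref{tlocalpullbackinsch} that the refinement $y \circ p$ is again an atlas, a point the paper leaves implicit.
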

	\begin{proof} Let $f$ be a representable morphism admitting $\T$-local sections. Let $ v: V \to \Y$ be a morphism where $V \in \Ca$. We want to show that the base change morphism $f': X':= V \times_{\Y} \X \to V$ is a morphism admitting $\T$-local sections. By definition, there exists an atlas admitting $\T$-local sections $ v': V' \to \Y$ and a section $s: V' \to \X$. Then the base change morphism $v'': V'':= V' \times_{\Y} V \to V$ admits $\T$-local sections. \\
		On the other hand, the section $s$ induces a map $s': V'' \to X'$. As $v''$ admits $\T$-local sections, there exists a covering $ \tilde{v}: \tilde{V} \to V$ and a section $s'': \tilde{V} \to V''$. \\
		Thus there exists a covering $\tilde{v}$ and a section $s' \circ s''$ implying that $f'$ admits $\T$-local sections. \\
		
		For the other direction, let $ v: V \to \Y$ be an atlas admitting $\T$-local sections. The assumptions says that the base change morphism $f': X':= \X \times_{\Y} V$ is a morphism admitting $\T$-local sections. Thus there exists a covering $v': V' \to V$ and a section $s': V' \to X'$. Then the compositions $v \circ v'$ and $x' \circ s'$ imply that $f$ admits $\T$-local sections (here $x'$ is the pullback of $v$ along $f$).  
	\end{proof}
	\begin{lemma}
		The pullback of an atlas along any morphism in $\op{St}_{\Ca}$ is a morphism admitting $\T$-local sections.
		
	\end{lemma}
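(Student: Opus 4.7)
The plan is to reduce the claim to the atlas property (2) of \cref{stacklocsecdef} via \cref{tlocrep}. Let $f: \Y \to \X$ be any morphism in $\op{St}_{\Ca}$ and $x: X \to \X$ an atlas; I want to show that the pullback $x': X \times_{\X} \Y \to \Y$ admits $\T$-local sections in the sense of \cref{repandtlocdef}.

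First I would observe that $x$ is representable in $\Ca$: this follows from property (3) of \cref{stacklocsecdef} together with the remark that the diagonal being representable is equivalent to any morphism out of an object of $\Ca$ being representable. Since representable morphisms are stable under pullback (as noted after the remark), the pullback $x': X \times_{\X} \Y \to \Y$ is also representable. This lets me apply \cref{tlocrep}: it suffices to check that for every morphism $v: V \to \Y$ with $V \in \Ca$, the further base change
\[ V \times_{\Y} (X \times_{\X} \Y) \longrightarrow V \]
is a morphism in $\Ca$ admitting $\T$-local sections.

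Next I would apply the pasting lemma for pullbacks: the iterated fiber product $V \times_{\Y} (X \times_{\X} \Y)$ is canonically isomorphic to $V \times_{\X} X$, formed with respect to the composite $f \circ v : V \to \X$. Under this identification, the projection to $V$ agrees with the pullback of $x$ along $f \circ v$. Since $V \in \Ca$ and $x$ is an atlas in the sense of \cref{stacklocsecdef}(2), the morphism $V \times_{\X} X \to V$ is a morphism in $\Ca$ admitting $\T$-local sections, which is exactly what was needed.

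The argument is essentially a bookkeeping exercise, so there is no real obstacle; the only thing one has to be careful about is not to confuse the two notions of admitting $\T$-local sections, one for morphisms in $\Ca$ (used in the atlas definition) and one for morphisms in $\op{St}_{\Ca}$ (which is what the lemma asks for). \cref{tlocrep} is precisely the bridge between the two, and the pasting of pullback squares is what allows us to reduce the second to the first.
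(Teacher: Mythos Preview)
Your proof is correct and follows essentially the same route as the paper: reduce to the representable case via \cref{tlocrep}, then use the pasting of pullback squares to identify the base change with a pullback of the atlas along a morphism from an object of $\Ca$, which admits $\T$-local sections by the atlas property. Your write-up is in fact slightly more careful than the paper's, as you explicitly justify representability of the atlas and distinguish the two notions of admitting $\T$-local sections.
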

	\begin{proof}
		Let $y: Y \to \Y$ be an atlas admitting $\T$-local sections and let $f: \X \to \Y$ be a morphism in $\op{St}_{\Ca}$. We want to show that $y': \X':=Y \times_{\Y} \X \to \X$ admits $\T$-local sections. As $v'$ is representable, by \cref{tlocrep} it suffices to show that for any morphism $ x: X \to \X$ where $X \in \Ca$, the fiber product $y'': X \times_{\X} \X' \to X$ admits $\T$-local sections. This follows from the fact that $y''$ is pullback of $y$ along $ y \circ x$ and $y$ is an atlas. 
	\end{proof}
	\begin{lemma} \label{covpullbandcomp}
		Morphisms in $\op{St}_{\Ca}$ admitting $\T$-local sections are stable under pullbacks and compositions.
	\end{lemma}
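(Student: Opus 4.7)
The plan is to construct the required atlas and section explicitly in each case, using the atlas–section data furnished by hypothesis together with the preceding lemma (pullback of an atlas admits $\T$-local sections) for the pullback case, and a direct construction from the two given atlases plus \cref{tlocalpullbackinsch} for the composition case.

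For pullback stability, let $f : \X \to \Y$ admit $\T$-local sections via an atlas $y : Y \to \Y$ and section $s : Y \to \X$ (so $f \circ s = y$), and let $g : \Y' \to \Y$ be arbitrary; set $f' : \X' := \X \times_{\Y} \Y' \to \Y'$. The pullback $y_g : Y \times_{\Y} \Y' \to \Y'$ admits $\T$-local sections by the preceding lemma, so there exist an atlas $p : \tilde{Y}' \to \Y'$ with $\tilde{Y}' \in \Ca$ and a section $\sigma : \tilde{Y}' \to Y \times_{\Y} \Y'$ satisfying $y_g \circ \sigma = p$. The relations $f \circ s = y$ and $y \circ \op{pr}_Y = g \circ \op{pr}_{\Y'}$ produce a canonical morphism $\tau : Y \times_{\Y} \Y' \to \X'$ with $f' \circ \tau = y_g$, and $\tau \circ \sigma : \tilde{Y}' \to \X'$ is the desired section over the atlas $p$.

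For composition stability, assume further that $g : \Y \to \Z$ admits $\T$-local sections via an atlas $z : Z \to \Z$ and section $t : Z \to \Y$ (so $g \circ t = z$). Form the fiber product $\tilde{Z} := Z \times_{\Y} Y$ via $t$ and $y$; representability of $y$ (from the representable-diagonal axiom) gives $\tilde{Z} \in \Ca$, and the projection $q : \tilde{Z} \to Z$ admits $\T$-local sections in $\Ca$ by the atlas property of $y$. To verify that $z \circ q : \tilde{Z} \to \Z$ is an atlas, for any $W \to \Z$ with $W \in \Ca$ we factor $\tilde{Z} \times_{\Z} W \cong \tilde{Z} \times_Z (Z \times_{\Z} W)$: the inner map admits $\T$-local sections by the atlas property of $z$, and the outer map is a pullback in $\Ca$ of $q$, hence admits $\T$-local sections as well, so \cref{tlocalpullbackinsch} finishes the check. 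Setting $\tilde{\sigma} := s \circ \op{pr}_Y : \tilde{Z} \to \X$, the pullback square defining $\tilde{Z}$ gives $(g \circ f) \circ \tilde{\sigma} = g \circ y \circ \op{pr}_Y = g \circ t \circ q = z \circ q$, so $\tilde{\sigma}$ is a section over the atlas $z \circ q$.

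The one technical point is bookkeeping: representability of the atlases $y$ and $z$ is what keeps intermediate fiber products like $Y \times_{\Y} \Y'$ and $\tilde{Z} \times_{Z}(Z \times_{\Z} W)$ inside $\Ca$, so that \cref{tlocalpullbackinsch} can be invoked where needed. Beyond this, only universal-property chases are required.
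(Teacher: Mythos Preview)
Your argument is correct and follows essentially the same route as the paper: for pullbacks you use the preceding lemma on pullbacks of atlases and the universal property of $\X'$, and for compositions you form the same fiber product $\tilde Z = Z\times_{\Y} Y$ and check that $z\circ q$ is an atlas with section $s\circ \op{pr}_Y$ (indeed you verify this atlas property more carefully than the paper, which simply asserts it). One small slip in your closing commentary: $Y\times_{\Y}\Y'$ need not lie in $\Ca$ since $\Y'$ is arbitrary in $\op{St}_{\Ca}$, but your proof never actually uses that claim, so nothing is affected.
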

	\begin{proof}
		\begin{enumerate}
			\textbf{Stable under pullbacks}: Consider a pullback square 
			\begin{equation}
				\begin{tikzcd}
					\X' \arrow[r] \arrow[d,"f'"] & \X \arrow[d,"f"] \\
					\Y' \arrow[r] & \Y
				\end{tikzcd}
			\end{equation}
			where $f$ admits $\T$-local sections. Thus, there exists an atlas $y: Y \to \Y$ and a morphism $ g: Y \to \X$ such that $f \circ g = y$. Then the base change morphism $\tilde{y}: \Y'':= Y \times_{\Y} \Y' \to \Y'$ admits $\T$-local sections. As $\Y'$ admits a morphism to $\X$ (via $g$), we have a unique morphism $ g'': \Y'' \to \X'$ such that $f' \circ g'' = \tilde{y}$. We denote $y': Y' \to \Y'' \xrightarrow{\tilde{y}} \Y''$ to be the composition where $Y'$ is an atlas of $\Y''$. Note that $y'$ admits $\T$-local sections. Denote $g'$ to be composition $ g': Y' \to \Y'' \xrightarrow{g''} \X''$. Then we have that $f' \circ g' = y'$ thus proving the fact that $f'$ admits $\T$-local sections. \\
			
			\textbf{Stable under compositions}: Let $ f: \X \to \Y$ and $g: \Y \to \Z$ be morphisms admitting $\T$-local sections. Thus there exists atlases $ z: Z \to \Z$, $y: Y \to \Y$ and morphisms $ p: Z \to Y$, $ q: Y \to \X$ such that $ g \circ p = z$ and $ f \circ q = y$. Let  $p':  Z':= Y \times_{\Y} Z \to Y$ and $y': Z' \to Z$ be the base change of $p$ and $y$ respectively. Thus $y'$ admits $\T$-local sections. We denote the compositions by $z':= z \circ y': Z' \to \Z$ and $q':= q \circ p': Z' \to \X$.  Thus we get that $ q' \circ(g \circ f) =z'$ implying  that $ g\circ f$ admits $\T$-local sections (as $z'$ is an atlas of $\Z$). 
		\end{enumerate}
	\end{proof}
	The following lemma gives us another definition of morphisms admitting $\T$-local sections which shall help us to prove the sheaf condition in \cref{thmmain}. 
	\begin{lemma}\label{tlocalsectionanotherformulation}
		A morphism $f: \X \to \Y$ admits $\T$-local sections iff there exists a commutative diagram 
		\begin{equation}
			\begin{tikzcd}
				X \arrow[r,"x"] \arrow[d,"f'"] & \X \arrow[d,"f"] \\
				Y \arrow[r,"y"] & \Y
			\end{tikzcd}
		\end{equation}
		where $f'$ admits $\T$-local sections in $\Ca$ and $x$, $y$ are atlases admitting $\T$-local sections.
		
	\end{lemma}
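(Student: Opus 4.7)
I would prove the two implications separately. The backward direction is a short verification; the forward direction requires constructing the witnessing square, and this is where the main work lies.

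\textbf{Backward direction.} Suppose the square is given. I would first argue that the composite $y \circ f' : X \to \Y$ is itself an atlas of $\Y$ in the sense of \cref{stacklocsecdef}. Indeed $X \in \Ca$, and for any test morphism $z : Z \to \Y$ with $Z \in \Ca$, the pullback $Z \times_\Y X \to Z$ factors through $Z \times_\Y Y \to Z$: the second leg admits $\T$-local sections by the atlas property of $y$, and the first leg is a pullback of $f'$, hence admits $\T$-local sections by \cref{tlocalpullbackinsch}. Composing (again by \cref{tlocalpullbackinsch}) shows that $Z \times_\Y X \to Z$ admits $\T$-local sections in $\Ca$. The given identity $f \circ x = y \circ f'$ then exhibits $x$ as a section of $f$ over the atlas $y \circ f'$, so $f$ admits $\T$-local sections by \cref{repandtlocdef}.

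\textbf{Forward direction.} Assume $f$ admits $\T$-local sections, witnessed by an atlas $y : Y \to \Y$ and a morphism $s : Y \to \X$ with $f \circ s = y$. Choose any atlas $x_0 : X_0 \to \X$ of $\X$ and form the pullback $X := Y \times_\Y X_0$ along $y$ and $f \circ x_0$. Since $y$ is representable, $X \in \Ca$; and the atlas property of $y$ applied to $f \circ x_0 : X_0 \to \Y$ shows that $\op{pr}_2 : X \to X_0$ admits $\T$-local sections in $\Ca$. I would take the candidate square
\[
\begin{tikzcd}
X \arrow[r, "x_0 \circ \op{pr}_2"] \arrow[d, "\op{pr}_1"'] & \X \arrow[d, "f"] \\
Y \arrow[r, "y"] & \Y,
\end{tikzcd}
\]
which commutes by the defining property of $X$. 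Setting $x := x_0 \circ \op{pr}_2$, I would check that $x$ is an atlas by factoring, for any $z : Z \to \X$ with $Z \in \Ca$, the pullback $Z \times_\X X \to Z$ through $Z \times_\X X_0 \to Z$: the second map admits $\T$-local sections by the atlas property of $x_0$, and the first is a pullback of $\op{pr}_2$, hence admits $\T$-local sections by \cref{tlocalpullbackinsch}.

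The main obstacle is verifying that $f' := \op{pr}_1 : X \to Y$ admits $\T$-local sections in $\Ca$. For this I would use the given section $s$: the atlas property of $x_0$ applied to $s : Y \to \X$ produces a covering $\tilde{Y} \to Y$ together with a lift $\tilde{Y} \to Y \times_\X X_0$. Post-composing the defining $2$-cell of $Y \times_\X X_0$ with $f$ and invoking $f \circ s = y$ yields a canonical morphism $Y \times_\X X_0 \to Y \times_\Y X_0 = X$ that is compatible with the projections to $Y$ and to $X_0$. The composite $\tilde{Y} \to Y \times_\X X_0 \to X$ therefore projects back to the covering $\tilde{Y} \to Y$, giving the desired $\T$-local section of $f'$ and completing the proof.
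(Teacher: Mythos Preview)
Your proof is correct. Both directions go through as written.

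In the backward direction your argument is actually a bit cleaner than the paper's: you observe directly that $y\circ f'$ is an atlas of $\Y$ and that $x$ is then a section of $f$ over it. The paper instead first unwinds the $\T$-local section of $f'$ to produce a covering $y':Y'\to Y$ and a lift $s':Y'\to X$, and then uses $y\circ y'$ as the atlas with section $x\circ s'$; this is one extra unpacking step with no additional content.

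In the forward direction the approaches genuinely diverge. The paper pulls back $f$ along an atlas $y:Y\to\Y$ to obtain $\X' = \X\times_{\Y}Y$, invokes \cref{covpullbandcomp} to see that both legs $\X'\to Y$ and $\X'\to\X$ admit $\T$-local sections, and then simply chooses any atlas $X\to\X'$; the required properties of the resulting square follow immediately from stability under composition. You instead fix an independent atlas $x_0:X_0\to\X$, form $X = Y\times_{\Y}X_0$, and verify the local-section property of $\op{pr}_1$ by hand via the canonical comparison map $Y\times_{\X}X_0 \to Y\times_{\Y}X_0$ induced by the witness section $s$. The paper's route is shorter because it delegates all the work to the already-proved stability lemma; your route is more explicit and makes the role of the section $s$ visible, at the cost of the extra fiber-product manipulation. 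Both constructions produce valid witnessing squares, and neither has a gap.
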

	\begin{proof}
		Let $f$ be a morphism admitting $\T$-local sections. Let $y: Y \to \Y$ be an atlas admitting $\T$-local sections. Then the base change morphisms $f'': \X':= \X \times_{\Y} Y \to Y$ and $x': \X' \to \X$ admit $\T$-local sections.  Let $x'': X \to \X'$ be an atlas of $\X'$. Then the compositions $ f':= f'' \circ x''$ and $x:= x' \circ x''$ admit $\T$-local sections giving us the commutative diagram that was needed.\\
		For the other direction, consider a commutative square 
		\begin{equation}
			\begin{tikzcd}
				X \arrow[r,"x"] \arrow[d,"f'"] & \X \arrow[d,"f"] \\
				Y \arrow[r,"y"] & \Y
			\end{tikzcd}
		\end{equation}	
		where $f'$ admits $\T$-local sections and $x$, $y$ are atlases. By definition there exists a covering $y': Y' \to Y$ in the topology $\T$ and a morphism $s': Y' \to X$ such that $f \circ s = y'$.  Defining $s = x \circ s'$ and $y'': Y' \to Y \to \Y$ we get that $ f \circ s = y''$ where $y''$ is an atlas of $\Y$. Hence $f$ admits $\T$-local sections.
		
	\end{proof}
	
	For any object $\X \in \op{St}_{\Ca}$, define $\op{Cov}(\X)$ as the set of families of the form $\{x_i: \X_i \to \X \}_{i \in I}$ such that $ x:= \coprod_i x_i: \coprod_i \X_i \to \X $  admits $\T$-local sections. \\
	
	\begin{lemma}\label{stcsitelemma}
		The family of coverings admitting $\T$-local sections $\op{Cov}(\X)$ for every object $\X \in \op{St}_{\Ca}$ defines a Grothendieck topology on $\op{St}_{\Ca}$. We will write $(\op{St}_{\Ca},\op{\T-\op{loc}})$ for the corresponding site.
	\end{lemma}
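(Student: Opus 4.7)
The plan is to verify the three axioms of a Grothendieck pretopology on $\op{St}_\Ca$: that identities form singleton coverings, that coverings are stable under base change, and that coverings compose. The engine of the proof is \cref{covpullbandcomp}, together with the existence of small coproducts and fiber products in $\op{St}_\Ca$ guaranteed by \cref{stacklocsecdef}.

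For any $\X$, the singleton $\{\op{id}_\X\}$ lies in $\op{Cov}(\X)$ since any atlas of $\X$ tautologically witnesses $\op{id}_\X$ as admitting $\T$-local sections. For stability under base change, given a covering $\{x_i: \X_i \to \X\}_{i \in I}$ and a morphism $g: \Y \to \X$, I would use the universal properties of coproducts and fiber products to produce a canonical isomorphism
\[ \coprod_i (\X_i \times_\X \Y) \cong \Big( \coprod_i \X_i \Big) \times_\X \Y, \]
identifying the base-changed family with the pullback of $\coprod_i x_i$ along $g$. By \cref{covpullbandcomp} this pullback admits $\T$-local sections, so $\{\X_i \times_\X \Y \to \Y\}_{i} \in \op{Cov}(\Y)$.

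For composition, given $\{x_i: \X_i \to \X\}_i \in \op{Cov}(\X)$ and $\{x_{ij}: \X_{ij} \to \X_i\}_j \in \op{Cov}(\X_i)$ for each $i$, I would factor $\coprod_{i,j}(x_i \circ x_{ij})$ as
\[ \coprod_{i,j} \X_{ij} \xrightarrow{\varphi} \coprod_i \X_i \xrightarrow{\coprod_i x_i} \X, \]
where $\varphi$ is assembled from the family $\{\coprod_j x_{ij}\}_i$. The right arrow admits $\T$-local sections by hypothesis. For $\varphi$, each $\coprod_j x_{ij}$ admits $\T$-local sections, yielding atlases $v_i: V_i \to \X_i$ together with sections $s_i: V_i \to \coprod_j \X_{ij}$; taking coproducts, $\coprod_i v_i$ serves as an atlas of $\coprod_i \X_i$ and $\coprod_i s_i$ as a compatible section for $\varphi$. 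Invoking \cref{covpullbandcomp} for composites then yields that $\coprod_{i,j}(x_i \circ x_{ij})$ admits $\T$-local sections.

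The main obstacle is the coproduct step inside the composition axiom: namely, verifying that $\coprod_i v_i$ is genuinely an atlas admitting $\T$-local sections of $\coprod_i \X_i$ in the sense of \cref{stacklocsecdef}(2). This requires that any morphism $W \to \coprod_i \X_i$ from $W \in \Ca$ decomposes, at least after refinement in $\T$, along the components of the coproduct, so that the base-change condition defining an atlas reduces to the separate atlases $v_i$. This compatibility between coproducts in $\Ca$ and in $\op{St}_\Ca$ is automatic in the intended examples (such as $\op{Nis-locSt}$, where coproducts are disjoint unions) and is the only place where slightly more than the bare axioms of \cref{stacklocsecdef} is implicitly invoked.
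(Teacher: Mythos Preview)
Your proposal is correct and follows the same route as the paper: reduce the three pretopology axioms to the statement that morphisms admitting $\T$-local sections contain identities and are closed under pullback and composition, then cite \cref{covpullbandcomp}. The paper's own proof is a three-line invocation of exactly these facts and does not spell out the passage between families $\{x_i\}$ and the single morphism $\coprod_i x_i$ at all.

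You have gone further than the paper by isolating the coproduct issue in the composition axiom, namely whether $\coprod_i v_i : \coprod_i V_i \to \coprod_i \X_i$ is again an atlas admitting $\T$-local sections. Your diagnosis is accurate: this step needs that coproducts in $\op{St}_\Ca$ behave well with respect to morphisms from objects of $\Ca$ (e.g.\ are disjoint and universal), which is not part of \cref{stacklocsecdef}. The paper silently relies on this and only makes the relevant hypothesis explicit slightly later, at the start of the subsection on the extension theorem, where it assumes the conditions of \cite[Proposition~A.3.3.1]{SAG} (coproducts disjoint and finite coproducts universal). So your ``main obstacle'' paragraph is not a defect of your argument but a gap in the paper's own presentation that you have correctly flagged; the fix is precisely the standing assumption the paper imposes a few lines further on.
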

	\begin{proof}
		The identity morphisms in $\op{St}_{\Ca}$ admit $\T$-local sections. By \cref{covpullbandcomp}, $\T$-local sections are stable under pullbacks and compositions. Thus the $(2,1)$-category $\op{St}_{\Ca}$ defines a site.
		
	\end{proof}
	
	\subsection{The $(2,1)$-category $\op{Nis-locSt}$.}\label{Nislocdefinition}
	
	In this section, we introduce the class of stacks for which we can extend cohomology theories that satisfy descent with respect to the Nisnevich topology. This class of stacks will be called $\op{Nis-locSt}$ and we will explain that many interesting Artin stacks are contained in this class. In particular, we show that this contains all local quotient stacks and quasi-separated algebraic spaces.\\
	
	At first, we consider $\Ca =\op{Sch}$, the category of schemes equipped with the Nisnevich topology. Then any quasi-separated algebraic space has an atlas admitting Nisnevich-local sections (\cite[Chapter 2, Theorem 6.3]{knu}). So we can consider $\op{St}_{\Ca} = \Nstalg$ to be the category of quasi-separated algebraic spaces.
	\begin{remark}\label{algspcover}
		By the above discussion, given any quasi-separated algebraic space $\X$, there exists a Nisnevich covering $x: X \to \X$ where $X$ is a scheme.
	\end{remark}
	
	\begin{notation}
		The category $\op{Nis-locSt}$ is the category of algebraic stacks for which there exists a smooth atlas admitting Nisnevich-local sections. In the terminology introduced in \cref{stacklocsecdef}, this is the category of stacks admitting Nisnevich-local sections for the category $\Nstalg$ of quasiseparated algebraic spaces.
	\end{notation}

	Before listing some examples of algebraic stacks in $\op{Nis-locSt}$, let us verify some properties of the category $\op{Nis-locSt}$. 
	
	\begin{lemma}\label{nislocstproperty1}
		The $(2,1)$-category $\op{Nis-locSt}$ admits fiber products.
	\end{lemma}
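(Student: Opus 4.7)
The plan is to reduce the existence of fiber products in $\op{Nis-locSt}$ to fiber products in the ambient $(2,1)$-category of all algebraic stacks: given a cospan $\X \to \Z \leftarrow \Y$ in $\op{Nis-locSt}$, the fiber product $\W := \X \times_{\Z} \Y$ already exists as an algebraic stack, so the real content is to exhibit a smooth atlas $p \colon T \to \W$ with $T \in \Nstalg$ that admits Nisnevich-local sections. The natural guess for $T$ is the fiber product over $\Z$ of atlases chosen for $\X$ and $\Y$, and the strategy is to (i) check that this $T$ is still a quasi-separated algebraic space, (ii) check smoothness of $p$ by writing it as an iterated base change, and (iii) use \cref{tlocrep} together with \cref{tlocalpullbackinsch} inside $\Nstalg$ to verify the Nisnevich-local-sections condition.

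Concretely, I would pick smooth atlases $x \colon X \to \X$ and $y \colon Y \to \Y$ admitting Nisnevich-local sections, with $X, Y \in \Nstalg$, and set $T := X \times_{\Z} Y$. Since $\Z \in \op{Nis-locSt}$ is in particular an algebraic stack, its diagonal $\Z \to \Z \times \Z$ is representable in $\Nstalg$; the cartesian square presenting $T = (X \times Y) \times_{\Z \times \Z} \Z$ then shows that $T \to X \times Y$ is representable in $\Nstalg$, so $T$ is itself a quasi-separated algebraic space (using that $\Nstalg$ is closed under fiber products along representable morphisms). The canonical map $p \colon T \to \W$ factors as
\[
T \;\longrightarrow\; X \times_{\Z} \Y \;\longrightarrow\; \W,
\]
where the first morphism is the base change of $y$ along $X \times_{\Z} \Y \to \Y$ and the second is the base change of $x$ along $\W \to \X$; since each of $x,y$ is smooth and smoothness is preserved under base change of stacks, $p$ is smooth.

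For the Nisnevich-local-sections property of $p$, since $p$ is representable I invoke \cref{tlocrep} and test on an arbitrary $v \colon V \to \W$ with $V \in \Nstalg$. The base change of $p$ along $v$ identifies with
\[
(V \times_{\X} X) \times_V (V \times_{\Y} Y) \;\longrightarrow\; V.
\]
By the defining property of the atlases $x$ and $y$, both legs $V \times_{\X} X \to V$ and $V \times_{\Y} Y \to V$ admit Nisnevich-local sections in $\Nstalg$, and \cref{tlocalpullbackinsch} tells me this class is closed under pullback and composition inside $\Nstalg$; factoring the above map as the pullback of one leg along the other, followed by that other leg, delivers the required Nisnevich-local section.

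The main obstacle is step (i), verifying that the naive fiber product $T = X \times_{\Z} Y$ is still a quasi-separated algebraic space rather than just an algebraic stack; this is exactly where the representability of the diagonal of $\Z$ (property (3) of \cref{stacklocsecdef}) is indispensable. The remaining ingredients are essentially formal once one is allowed to pass to $\Nstalg$ via \cref{tlocrep} and close up under pullbacks and compositions via \cref{tlocalpullbackinsch}.
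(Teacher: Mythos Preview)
Your proof is correct but takes a genuinely different route from the paper's.

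The paper first chooses an atlas $x_0\colon X_0 \to \X_0$ of the \emph{base} of the cospan, then proves an auxiliary claim (\cref{covnonemptyclaim}) allowing it to lift the structure maps $\X_1 \to \X_0$ and $\X_2 \to \X_0$ to maps $X_1 \to X_0$ and $X_2 \to X_0$ between atlases. Its candidate atlas for $\X_3 = \X_1 \times_{\X_0} \X_2$ is then $X_3 := X_1 \times_{X_0} X_2$, a fiber product taken entirely inside $\Nstalg$, so the question of whether the candidate lies in $\Nstalg$ never arises. The verification step then identifies $V \times_{\X_3} X_3$ with $X'_1 \times_{X'_0} X'_2$, involving all three pulled-back atlases.

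You instead ignore the atlas of the base entirely and form $T = X \times_{\Z} Y$ directly over the stack $\Z$. This is more economical but forces you to check that $T$ is still a quasi-separated algebraic space, which is precisely where you invoke the diagonal representability (property~(3) of \cref{stacklocsecdef}). In exchange, your verification of the local-sections condition is cleaner: the pullback $T \times_{\W} V$ is simply $(V \times_{\X} X) \times_V (V \times_{\Y} Y)$, which visibly factors as a pullback of one leg followed by the other, so \cref{tlocalpullbackinsch} applies immediately without tracking a third factor over $X'_0$.

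Both arguments are valid; yours is shorter and avoids the auxiliary \cref{covnonemptyclaim}, while the paper's has the minor advantage that its candidate atlas lies in $\Nstalg$ by construction.
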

	\begin{proof}
		Let 
		\begin{equation}
			\begin{tikzcd}
				{} & \X_2 \arrow[d] \\
				\X_1 \arrow[r] & \X_0 
			\end{tikzcd}
		\end{equation}
		be a diagram where $\X_0,\X_1$ and $\X_2$ are objects in $\op{Nis-locSt}$. We want to show that $\X_3:=\X_1 \times_{\X_0} \X_2$ is algebraic stack which has an atlas admitting Nisnevich-local sections. \\
		Let $ x_0: X_0 \to \X_0$ be an atlas admitting Nisnevich-local sections. At first, we prove the following claim
		\begin{claim}\label{covnonemptyclaim}
			Let $ f: \X \to \Y$ be a morphism in $\op{Nis-locSt}$, then for every atlas $y:Y \to Y$, there exists of a 2-commutative square 
			\begin{equation}
				\begin{tikzcd}
					X \arrow[r,"x"] \arrow[d,"f'"] & \X \arrow[d,"f"] \\
					Y \arrow[r,"y"] & \Y
				\end{tikzcd}
			\end{equation}
			where $f'$ is a morphism of schemes and $x$ is  an atlas admitting Nisnevich-local sections.
		\end{claim}
		\begin{proof}[Proof of claim]
			Let $x: X_0 \to \X$ be an atlas of  $\X$. Then the base change $x': X':= \X \times_{\Y} X \to \X$ is a morphism admitting $\T$-local sections. Therefore $x:X:= X_0  \times_{\X} X' \to X' \xrightarrow{x'} \X$ is an atlas of $\X$ fitting in the diagram 	
			\begin{equation}
				\begin{tikzcd}
					X \arrow[r,"\tilde{y}"] \arrow[d,"f'"] & \X \arrow[d,"f"] \\
					Y \arrow[r,"y"] & \Y
				\end{tikzcd}
			\end{equation}
			where $f$ lies in $\Ca$ and $y,x$ are morphisms admitting $\T$-local sections.\\
		\end{proof}
		
		Applying the claim, we get that there exist atlases $x_1: X_1 \to \X_1$ and $ x_2: X_2 \to \X_2$ such that the following diagrams 
		\begin{equation}
			\begin{tikzcd}
				X_1 \arrow[r] \arrow[d,"x_1"] & X_0 \arrow[d,"x_0"] \\
				\X_1 \arrow[r] & \X_0,
			\end{tikzcd}
			\begin{tikzcd}
				X_2 \arrow[r] \arrow[d,"x_2"] & X_0 \arrow[d,"x_0"] \\
				\X_2 \arrow[r] & \X_0,
			\end{tikzcd}
		\end{equation}
		commute. This induces a natural map $x_3: X_3:= X_1 \times_{X_0} X_2 \to \X_3$. We claim that $x_3$ is an atlas admitting Nisnevich-local sections. Let $v_3: V \to \X_3$ be a morphism where $V$ is a scheme. Then it induces maps $v_1: V \to \X_1$, $v_0: V  \to \X_0$ and $v_2: V \to \X_2$. Thus the base change morphisms $x'_1: X'_1:=V \times_{\X_1} X_1 \to V$, $x'_2:X'_2:=V \times_{\X_2} X_2 \to V$ and $x'_0:X'_0:=V \times_{\X_0} X_0 \to V$ admit Nisnevich-local sections. As the fiber product $V \times_{\X_3} X_3 \cong X'_1 \times_{X'_0} X'_2$, the morphism $x'_3: V \times_{\X_3} X_3 \to V$ admits Nisnevich-local sections. Thus $x_3$ is an atlas admitting Nisnevich-local sections. 
	\end{proof}
	\begin{lemma}\label{nislocproperty2}
		Let $ f: \X \to \Y$ be a  morphism of algebraic stacks representable by algebraic spaces such that $\Y \in \op{Nis-locSt}$, then $\X \in \op{Nis-locSt}$.
	\end{lemma}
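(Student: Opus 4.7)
The plan is to build an atlas of $\X$ by base change of a chosen atlas of $\Y$, and then check that this pulled-back atlas inherits the Nisnevich-local section property. By hypothesis $\Y \in \op{Nis-locSt}$, so I fix a smooth atlas $y \colon Y \to \Y$ admitting Nisnevich-local sections, where $Y$ is a quasi-separated algebraic space. I form the cartesian square
\begin{equation*}
\begin{tikzcd}
X \arrow[r,"x"] \arrow[d,"g"] & \X \arrow[d,"f"] \\
Y \arrow[r,"y"] & \Y
\end{tikzcd}
\end{equation*}
with $X := Y \times_{\Y} \X$. Since $f$ is representable by algebraic spaces and $Y$ is a quasi-separated algebraic space, $X$ is a quasi-separated algebraic space, i.e.\ an object of $\Nstalg$. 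Moreover $x$ is smooth as the base change of the smooth morphism $y$.

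Next I want to show that $x \colon X \to \X$ is an atlas admitting Nisnevich-local sections. Since $x$ is representable (its source lies in $\Nstalg$), by \cref{tlocrep} it suffices to verify that for every morphism $v \colon V \to \X$ with $V \in \Nstalg$, the base change $V \times_{\X} X \to V$ admits Nisnevich-local sections in $\Nstalg$. Computing the fiber product in two steps gives
\[
V \times_{\X} X \;=\; V \times_{\X}\bigl(\X \times_{\Y} Y\bigr) \;\cong\; V \times_{\Y} Y,
\]
where the map $V \to \Y$ is $f \circ v$. The projection $V \times_{\Y} Y \to V$ admits Nisnevich-local sections by the defining property of the atlas $y$, so the same is true of $V \times_{\X} X \to V$. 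This shows that $x$ is a smooth atlas of $\X$ admitting Nisnevich-local sections, and hence $\X \in \op{Nis-locSt}$.

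I do not anticipate a genuine obstacle here; the content is a direct application of the definition combined with the standard fact that a representable morphism $f$ allows one to pull atlases of the target back to atlases of the source. The only points that warrant a moment of care are (i) making sure that the pulled-back $X$ really lies in $\Nstalg$ (which uses representability of $f$ by algebraic spaces and quasi-separatedness of $Y$), and (ii) identifying the double fiber product $V \times_{\X}(\X \times_{\Y} Y)$ with $V \times_{\Y} Y$, after which the Nisnevich-local section property transports along for free.
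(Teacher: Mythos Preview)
Your argument is correct and follows essentially the same route as the paper: pull back the chosen atlas $y \colon Y \to \Y$ along $f$, then use the identification $V \times_{\X} (\X \times_{\Y} Y) \cong V \times_{\Y} Y$ to transport the Nisnevich-local section property. If anything, your write-up is slightly more careful than the paper's (you invoke \cref{tlocrep} explicitly and test against $V \in \Nstalg$, whereas the paper tests against schemes and implicitly uses \cref{algspcover} to bridge the gap).
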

	\begin{proof}
		Let $ y: Y \to \Y$ be an atlas admitting Nisnevich local sections. By \cref{algspcover}, it suffices to show that the base change morphism $x: X:=Y \times_{\Y} \X \to \X$ admits Nisnevich-local sections where $X$ is an algebraic space. Let $x': X' \to \X$ be a morphism where $X$ is a scheme. As $X'':= X' \times_{\X} X \cong X' \times_{\Y} Y$, the base change morphism $y': X'' \to Y$ is a morphism of scheme admitting Nisnevich-local sections. 
	\end{proof}
	Before proving the next corollary, let us recall that an algebraic stack $\X$ is a local quotient stack if it admits a open covering by quotient stacks of the form $[X/G]$ where $G$ is an affine algebraic group ( \cite[A.2.2]{Freed_2011}).  
	\begin{corollary}\label{quotstackinnst}
		All local quotient stacks are contained in $\op{Nis-locSt}$.
	\end{corollary}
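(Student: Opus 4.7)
The plan is to reduce the statement in two stages: first establish that every quotient stack $[X/G]$ with $G$ an affine algebraic group lies in $\op{Nis-locSt}$, then assemble a global atlas for a general local quotient stack by gluing the atlases provided by the open cover.

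For a single quotient stack $\X = [X/G]$ with $G$ affine, the naive atlas $X \to \X$ is a $G$-torsor, and for a general affine $G$ such torsors are only \'etale-locally trivial, which is too coarse. The trick is to change the structure group: choose a closed embedding $G \hookrightarrow \op{GL}_n$ and form the associated space $Y := X \times^G \op{GL}_n = (X \times \op{GL}_n)/G$, where $G$ acts freely via $g \cdot (x,h) = (gx, hg^{-1})$. Since the action is free, the quotient exists as a quasi-separated algebraic space, and the natural identification $[X/G] \simeq [Y/\op{GL}_n]$ turns the quotient map $y \colon Y \to [X/G]$ into a smooth atlas by an algebraic space. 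For any morphism $v \colon V \to [X/G]$ with $V \in \Nstalg$, the fibre product $V \times_{[X/G]} Y \to V$ is a $\op{GL}_n$-torsor; by Hilbert 90 it is Zariski-locally trivial, so the base change admits Zariski-local (hence Nisnevich-local) sections. Consequently $[X/G] \in \op{Nis-locSt}$.

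For a local quotient stack $\X$ with a covering by open substacks $\U_i \hookrightarrow \X$ of the form $[X_i/G_i]$, choose for each $i$ an atlas $y_i \colon Y_i \to \U_i$ admitting Nisnevich-local sections as above, and form the combined morphism $y := \coprod_i y_i \colon Y := \coprod_i Y_i \to \X$. This is smooth and surjective because each $y_i$ is smooth and the $\U_i$ cover $\X$. To verify that $y$ admits Nisnevich-local sections, take any $v \colon V \to \X$ with $V \in \Nstalg$; the preimages $V_i := V \times_{\X} \U_i$ form a Zariski open cover of $V$, and
\[
V \times_{\X} Y \;\cong\; \coprod_i V_i \times_{\U_i} Y_i.
\]
Each $V_i \times_{\U_i} Y_i \to V_i$ admits Nisnevich-local sections since $y_i$ does, so we can find Nisnevich covers $V_i' \to V_i$ together with sections $s_i \colon V_i' \to V_i \times_{\U_i} Y_i$. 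The disjoint union $\coprod_i V_i' \to V$ is a Nisnevich cover of $V$ (a Nisnevich refinement of the Zariski cover $\{V_i\}$), and the $s_i$ combine into a section into $V \times_{\X} Y$. Hence $y$ is an atlas of $\X$ admitting Nisnevich-local sections, so $\X \in \op{Nis-locSt}$.

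The only genuine obstacle is the first step: proving that $[X/G]$ admits an atlas for which torsors trivialise in the Nisnevich topology. The replacement of $X \to [X/G]$ by the associated $\op{GL}_n$-bundle $Y \to [X/G]$ is what makes the result true -- it is the point at which Hilbert 90 enters. Everything else is a formal manipulation of fibre products and open covers, which is why the conclusion for local quotient stacks follows immediately from the quotient-stack case.
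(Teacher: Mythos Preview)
Your argument is correct and rests on the same key input as the paper: $\op{GL}_n$ is special, so $\op{GL}_n$-torsors are Zariski-locally trivial. The organization differs slightly. The paper proceeds structurally: it shows $\op{BGL}_n \in \op{Nis-locSt}$ directly, then uses \cref{nislocproperty2} (representable morphisms to stacks in $\op{Nis-locSt}$ land in $\op{Nis-locSt}$) applied to $\op{BG} \to \op{BGL}_n$ and then to $[X/G] \to \op{BG}$. You instead construct the $\op{GL}_n$-atlas $Y = X \times^G \op{GL}_n \to [X/G]$ by hand; this is exactly the atlas the paper unpacks in the remark following the proof, so the two arguments are really the same computation viewed from different angles. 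Your treatment of the passage from quotient stacks to local quotient stacks spells out explicitly what the paper compresses into the single observation that Zariski covers admit Zariski-local sections; both are fine. The paper's route has the advantage of isolating a reusable lemma (\cref{nislocproperty2}), while yours is more self-contained.
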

	\begin{proof}
		As Zariski open coverings admit Zariski-local sections, it suffices to prove that quotient stacks lie in $\op{Nis-locSt}$.	
		At first, we see that $\op{BGL_n}$ lies in $\op{Nis-locSt}$. This is because the atlas $\op{pt} \to \op{BGL_n}$ is a $\op{GL}_n$-torsor. As $\op{GL}_n$ is special, $\op{GL}_n$-torsors are Zariski-locally trivial. Thus $\op{pt} \to \op{BGL_n}$ is an atlas admitting Nisnevich-local sections.\\
		If $G$ is an affine algebraic group, then the inclusion $i: G \hookrightarrow \op{GL}_n$ induces a representable morphism $i: \op{BG} \to \op{BGL_n}$. As $\op{BGL_n} \in \op{Nis-locSt}$, by \cref{nislocproperty2} we get that $\op{BG} \in \op{Nis-locSt}$.\\
		The map $[X/G] \to \op{BG}$ is representable. Applying \cref{nislocproperty2}, we get that $[X/G] \in \op{Nis-locSt}$. \\
		
	\end{proof}
	\begin{remark}
		Note that unless $G$ is a special group, the standard atlas $X \to [X/G]$ of a quotient stack may not admit Nisnevich-local sections. In the proof above, this atlas is replaced by a scheme $X''$ which is a Nisnevich cover of the algebraic space $X \times^G \op{GL}_n$. Lets us explain this in detail.\\
		We can write $[X/G]$ as $ [X \times^G GL_n/ GL_n]$. The object $X \times^G GL_n$ exists as an algebraic space. Thus the morphism $x':X \times^G \op{GL_n} \to [X/G]$ is a $\op{GL}_n$-torsor and hence admits Nisnevich-local sections. By \cref{algspcover}, we get that there exists a Nisnevich cover $x'':X'' \to X \times^G \op{GL}_n$ where $X''$ is a scheme. Hence, we get that the morphism $x' \circ x'': X'' \to [X/G]$ is an atlas admitting Nisnevich-local sections. \\
		
	\end{remark}
	Recall that Totaro and Gross explained that the property of being a quotient stack is closely related to the resolution property (\cite{totaroresolution} and\cite{grosstensorgenerator}).
	\begin{corollary}
		Let $\X$ be a quasi-compact and quasi-separated algebraic stack which has affine stabilizers at closed points and satisfies the resolution property. Then $\X \in \op{Nis-locSt}$.
	\end{corollary}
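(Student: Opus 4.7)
The plan is to reduce to the previous corollary by invoking the Totaro--Gross characterization of the resolution property. Specifically, by the main theorem of \cite{totaroresolution} (and its extension by \cite{grosstensorgenerator} to the positive-characteristic / algebraic-space setting), a quasi-compact and quasi-separated algebraic stack $\X$ with affine stabilizers at closed points has the resolution property if and only if there exists an integer $n \geq 1$, a quasi-affine scheme $X$, and an action of $\op{GL}_n$ on $X$ such that $\X \cong [X/\op{GL}_n]$. So the first (and essentially only) step is to invoke this theorem to produce such a presentation.

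Once $\X$ is written as $[X/\op{GL}_n]$ for a scheme $X$, the result is immediate from \cref{quotstackinnst}: every quotient stack $[Y/G]$ with $G$ an affine algebraic group lies in $\op{Nis-locSt}$, and $\op{GL}_n$ is certainly affine algebraic. One can also note that since $\op{GL}_n$ is a special group, the standard atlas $X \to [X/\op{GL}_n]$ in this case already admits Nisnevich-local sections (indeed it is Zariski-locally trivial), so there is no need to pass to the auxiliary cover $X \times^G \op{GL}_n$ constructed in the proof of \cref{quotstackinnst}.

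The only genuine content lies in the Totaro--Gross step, which is a nontrivial external input; the passage from ``quotient stack by $\op{GL}_n$'' to ``lies in $\op{Nis-locSt}$'' is formal given the results already proved in this section. I would expect the main obstacle, if any, to be checking that the hypotheses (quasi-compact, quasi-separated, affine stabilizers at closed points, resolution property) match exactly the hypotheses of the version of the Totaro--Gross theorem being cited, and in particular that the atlas $X$ may be taken to be a scheme (not merely an algebraic space), so that the previous corollary applies verbatim.
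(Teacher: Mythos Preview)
Your proposal is correct and matches the paper's proof essentially verbatim: invoke Totaro--Gross to write $\X \cong [U/\op{GL}_n]$ with $U$ quasi-affine, then apply \cref{quotstackinnst}. Your additional remark that $\op{GL}_n$ is special (so the standard atlas already admits Zariski-local sections) is a correct refinement not made explicit in the paper's one-line proof.
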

	
	\begin{proof}
		Under the assumptions, we get that $\X \cong [U/\op{GL}_n]$ where $U$ is a quasi-affine scheme (\cite[Theorem 1.1]{totaroresolution} and \cite[Theorem A]{grosstensorgenerator}). Thus \cref{quotstackinnst} implies that $\X \in \op{Nis-locSt}$.
	\end{proof}
	
	We now explain how local constructions like blow ups and deformation to normal cone (\cite[Chapter 14]{Laumon2000}) also lie in $\op{Nis-locSt}$. Before stating the corollary, let us briefly recall the notions. Let $\X$ be an algebraic stacks and $ z:\Z \hookrightarrow \X$ be a closed substack and $\I \subset \O_{\X}$ be the ideal sheaf of $\Z$.  For any scheme $T$, we shall denote $T'$ to be the fiber product of $T$ along $z$.
	\begin{notation}\label{blowupdeforstacknotation}

		\begin{enumerate}
			\item The blowup of $\X$ along $\Z$ is the algebraic stack $\op{Bl}_{\Z}(\X):= \op{Proj}(\oplus_{n \geq 0} \I_{\Z}^n)$  which admits a morphism representable by schemes $\op{pr}_{\op{bl}}: \op{Bl}_{\Z}(\X) \to \X$ such that for any morphism $ T \to \X$, the fiber product $\op{Bl}_{\Z}(\X) \times_{\X} T$ is isomorphic to $\op{Bl}_{T'}T$. 
			\item The normal cone $N_{\Z}(\X):=\Sp(\oplus_{n \geq 0} \I^n/\I^{n+1})$ is the algebraic stack which admits a morphism $\op{pr}_{\op{n}}: N_{\Z}(\X) \to \Z$ representable by schemes.
			\item The deformation to the normal cone $D_{\Z}(\X)$ is the analog of deformation space in the setting of schemes. Let us recall the definition in the setting of schemes (\cite[Chapter 6]{Fulton1984}). Let $X$ be a scheme and $Z$ be a closed subscheme of $X$. We have the deformation space
			\[ D_ZX:=  (\widetilde{D_ZX}:= \op{Bl}_{Z \times \{0\}}(X \times \mathbf{A}^1))  -\op{Bl}_ZX\to X \times \mathbf{A}^1 . \]
		     The fiber over $ t= 0$ on $\widetilde{D_ZX}$ ( $t$ being the coordinate of $\A^1$) is the union of two schemes: $\op{Bl}_Z X$ and the projective completion of the normal bundle $\mathbf{P}(N_XY\oplus 1)$ glued along the Cartier divisor $E_X Y$. In this way, the blow up $\op{Bl}_{Z}X$ is realized as a subscheme of $\widetilde{D_ZX}$. \\
		     The algebraic stack $D_{\Z}(\X)$ is defined in the similar way as we do it for schemes (see \cite[Section 6.1]{CycKr} for details) . The algebraic stack $D_{\Z}(\X)$ admits a schematic representable morphism $\op{pr}_d: D_{\Z}(\X) \to \X \times \mathbf{A}^1$ such that the fibers of $\op{pr}_d$ over $\X \times \{0\}$ and $\X \times \{1\}$  are the normal cone $N_{\Z}(\X)$ and the algebraic stack $\X$ respectively. 
			
		\end{enumerate}
	\end{notation}
	\begin{corollary}\label{blownordefnisloc}
		Let $\X \in \op{Nis-locSt}$ and let $\Z$ be a closed substack of $\X$. Then $\op{Bl}_{\Z}(\X),N_{\Z}(\X)$ and $D_{\Z}(\X)$ belong to $\op{Nis-locSt}$.
	\end{corollary}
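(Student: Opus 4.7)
The plan is to reduce all three cases to \cref{nislocproperty2}, exploiting the fact that each of $\op{Bl}_{\Z}(\X)$, $N_{\Z}(\X)$ and $D_{\Z}(\X)$ is equipped with a natural morphism, representable by schemes, to a stack we already know lies in $\op{Nis-locSt}$. Since morphisms representable by schemes are in particular representable by algebraic spaces, \cref{nislocproperty2} applies directly once the target of each structure morphism is verified to be in $\op{Nis-locSt}$.

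First I would handle the blowup: by \cref{blowupdeforstacknotation}(1), the projection $\op{pr}_{\op{bl}}: \op{Bl}_{\Z}(\X) \to \X$ is representable by schemes, and $\X \in \op{Nis-locSt}$ by assumption, so \cref{nislocproperty2} immediately yields $\op{Bl}_{\Z}(\X) \in \op{Nis-locSt}$. Next, for the normal cone, I need to know that $\Z \in \op{Nis-locSt}$; since the closed immersion $z: \Z \hookrightarrow \X$ is representable by schemes (closed immersions are schematic), \cref{nislocproperty2} gives $\Z \in \op{Nis-locSt}$, and then \cref{blowupdeforstacknotation}(2) together with \cref{nislocproperty2} produces $N_{\Z}(\X) \in \op{Nis-locSt}$.

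The deformation case is almost identical, except that before applying \cref{nislocproperty2} to the schematic morphism $\op{pr}_d: D_{\Z}(\X) \to \X \times \mathbf{A}^1$ of \cref{blowupdeforstacknotation}(3), I need to check that $\X \times \mathbf{A}^1 \in \op{Nis-locSt}$. This is where \cref{nislocstproperty1} enters: writing $\X \times \mathbf{A}^1 = \X \times_{\Sp \zz} \mathbf{A}^1$, both factors and the base lie in $\op{Nis-locSt}$ (since $\mathbf{A}^1$ and $\Sp \zz$ are schemes), so the product does too. Then \cref{nislocproperty2} applied to $\op{pr}_d$ gives $D_{\Z}(\X) \in \op{Nis-locSt}$.

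There is no real obstacle here — the statement is essentially a formal corollary of \cref{nislocproperty2} and \cref{nislocstproperty1}. The only subtlety worth flagging is that one must remember to first establish $\Z \in \op{Nis-locSt}$ (for the normal cone) and $\X \times \mathbf{A}^1 \in \op{Nis-locSt}$ (for the deformation space) before the representable projections from \cref{blowupdeforstacknotation} can be fed into \cref{nislocproperty2}.
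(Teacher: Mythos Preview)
Your proposal is correct and follows exactly the same approach as the paper: apply \cref{nislocproperty2} to each of the three structure morphisms from \cref{blowupdeforstacknotation}. The paper's proof is a single sentence that omits the verifications that $\Z$ and $\X \times \mathbf{A}^1$ lie in $\op{Nis-locSt}$; your version is simply more careful in spelling out these intermediate steps via \cref{nislocproperty2} and \cref{nislocstproperty1}.
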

	\begin{proof}
		As the morphisms $\op{pr}_{\op{bl}}: \op{Bl}_{\Z}(\X) \to \X,\op{pr}_n: N_{\Z}(\X) \to \Z,\op{pr}_d: D_{\Z}(\X) \to (\X \times \mathbf{A}^1)$ are representable, \cref{nislocproperty2} gives us that these algebraic stacks also lie in $\op{Nis-locSt}$. 
	\end{proof}
	
	\begin{corollary}
		\begin{enumerate}
			\item For any projective variety $X$, the stack of vector bundles $\op{Bun}_n$ and the stack of $G$-bundles $\op{Bun}_G$ for an affine algebraic group $G$ are in $\op{Nis-locSt}$. The same result holds for stacks of Higgs bundles $\op{Higgs}_G$.
			
			\item The moduli spaces of stable maps are in $\op{Nis-locSt}$. 
		\end{enumerate}
	\end{corollary}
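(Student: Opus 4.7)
The plan is to reduce both assertions to the two structural results already established: \cref{quotstackinnst} (local quotient stacks lie in $\op{Nis-locSt}$) and \cref{nislocproperty2} (stacks that are representable by algebraic spaces over an object of $\op{Nis-locSt}$ again lie in $\op{Nis-locSt}$). Each of the stacks listed falls into one of these two categories, so the bulk of the work is really to identify the correct presentation in each case and cite the appropriate boundedness/quotient presentation from the literature.

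For $\op{Bun}_n$, I would use the standard exhaustion of the moduli stack by open substacks of finite type: fix an ample line bundle $\O_X(1)$, and consider, for each sufficiently large integer $m$, the open substack $\op{Bun}_n^{(m)}$ of rank $n$ bundles $E$ with $E(m)$ globally generated and higher cohomology vanishing. By the standard construction via a Quot scheme of quotients of $\O_X^N(-m)$, one has $\op{Bun}_n^{(m)} \cong [Q^{(m)}/\op{GL}_N]$ with $Q^{(m)}$ a locally closed subscheme of a Quot scheme. As $m$ varies the $\op{Bun}_n^{(m)}$ form an open cover of $\op{Bun}_n$, exhibiting it as a local quotient stack. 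For $\op{Bun}_G$ with $G$ affine, I would run the same argument: pick a faithful representation $G \hookrightarrow \op{GL}_n$, use the associated $\op{GL}_n$-bundle to compare with $\op{Bun}_n$, and invoke the fact (due essentially to Behrend--Dhillon or Heinloth in the curve case, and more generally from Alper--Hall--Rydh type results) that $\op{Bun}_G$ is locally of quotient type by $\op{GL}_N$. Then \cref{quotstackinnst} gives membership in $\op{Nis-locSt}$.

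For the Higgs variant, the cleanest route is the forgetful morphism $\op{Higgs}_G \to \op{Bun}_G$ sending $(E,\varphi)$ to $E$. This morphism is representable by affine schemes, since the fiber over a bundle $E$ is the $k$-vector space $H^0(X, \op{ad}(E)\otimes \omega_X)$ realized relatively as a cone over $\op{Bun}_G$. Combining this with the previous paragraph and \cref{nislocproperty2} yields $\op{Higgs}_G\in \op{Nis-locSt}$.

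For part (2), the moduli of stable maps $\overline{M}_{g,n}(X,\beta)$ is by a result of Fulton--Pandharipande (and extended by Kresch) a global quotient stack of the form $[R/\op{GL}_N]$, where $R$ is a locally closed subscheme of a Hilbert/Quot scheme parametrizing rigidified stable maps; hence it is covered by \cref{quotstackinnst}. The main obstacle I anticipate is not in the logical structure of the argument but in the verification of the quotient/local-quotient presentations: for $\op{Bun}_G$ over a higher-dimensional base, and for $\op{Higgs}_G$, one has to be careful that the boundedness results producing $[R/\op{GL}_N]$-presentations apply in the generality stated. Once those are granted, the membership in $\op{Nis-locSt}$ is purely formal from \cref{quotstackinnst,nislocproperty2}.
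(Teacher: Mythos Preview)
Your proposal is correct and follows essentially the same strategy as the paper: reduce everything to \cref{quotstackinnst} and \cref{nislocproperty2} via Quot-scheme presentations for $\op{Bun}_n$, representability of the forgetful map for $\op{Higgs}_G$, and the Fulton--Pandharipande quotient presentation for stable maps.

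The one place where the paper is cleaner than your outline is $\op{Bun}_G$. You propose to show directly that $\op{Bun}_G$ is a local quotient stack, which (as you yourself flag) requires some care about boundedness in higher dimensions. The paper instead observes that the extension-of-structure-group morphism $\op{Bun}_G \to \op{Bun}_n$ induced by a faithful representation $G \hookrightarrow \op{GL}_n$ is representable by algebraic spaces, and then applies \cref{nislocproperty2} to the already-established case of $\op{Bun}_n$. This is simpler and sidesteps the extra boundedness input you were worried about; since you already introduced the faithful representation and the comparison to $\op{Bun}_n$, you were one sentence away from this argument.
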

	\begin{proof}
		\begin{enumerate}
			\item The stack of vector bundles $\op{Bun}_n$ can be written as a union of $\op{Bun}^{\le m}_n$ where $\op{Bun}^m_n$ is the open substack of vector bundles of bounded maximal slope $m$. The stack $\op{Bun}^m_n$ is a locally closed substack of a quotient stack by Quot scheme construction (\cite[Theorem 3.3.7 and Section 4.3]{huybrechts_lehn_2010}). Thus $\op{Bun}_n$ is a local quotient stack and hence by \cref{quotstackinnst} lies in $\op{Nis-locSt}$.\\
			As the morphism $\op{Bun}_G \to \op{Bun}_n$ is representable, we get that $\op{Bun}_G$ is in $\op{Nis-locSt}$ (\cref{nislocproperty2}).
			The same argument holds for Higgs bundles as $\op{Higgs}_G \to \op{Bun}_G$ is representable. 
			\item  The moduli space of stable maps is isomorphic to a quotient stack of the form $[J/\op{PGL}_n]$ where $J$ is a quasi-projective variety(\cite[Section 2.4]{fultonpandharipande}). Hence by \cref{quotstackinnst}, the moduli space of stable maps lies in $\op{Nis-locSt}$.
		\end{enumerate}
		
	\end{proof}

	\subsection{Extension of sheaves from schemes to algebraic stacks.}
	In this section, we state and prove the theorem which helps us to extend $\infty$-sheaves from schemes to algebraic stacks. As we will use \v{C}ech nerves to verify the sheaf condition, we will from now on assume that the categories $\Ca$ and $\op{St}_{\Ca}$ satisfies the conditions in \cite[Proposition A.3.3.1]{SAG} i.e. coproducts are disjoint and finite coproducts are universal. These conditions are satisfied in all of the examples in the previous section. \cref{thmmain} is a special case of \cite[Proposition 4.1.1]{liu2017enhanced}. As this result is crucial for our construction of $\Shext(-)$ and the special case allows for a shorter proof, we give a self-contained proof of the theorem. Before formulating the result, let us recall that an $\infty$-category $\D$ admits geometric realizations if any simplicial object of $\D$ admits a colimit in $\D$.

	\begin{theorem}\label{thmmain}
		Let $(C,\T)$ be a site and $\op{St}_{\Ca}$ a category of stacks admitting $\T$-local sections. Let $F: N(\Ca^{op}) \to \D$ be an $\infty$-sheaf  where $\D^{op}$ is an $\infty$-category admitting geometric realizations. Then $F$ can be extended to an $\infty$-sheaf $F_{ext}$ on $(N^D_{\bb}(\op{St}_{\Ca}), \op{\T-loc})$. \\
		
		In particular given any object $\X \in \op{St}_{\Ca}$ and an atlas $ x: X \to \X$ admitting $\T$-local sections, $F_{ext}(\X)$ can be computed as a limit over the \v{C}ech nerve $X^{+}_{\bb,x}$ over $x$. In other words, 
		\begin{equation}\label{keydefofFext} F_{ext}(\X) \cong \op{lim} (\begin{tikzcd}
				F(X) \arrow[r,shift left =2] \arrow[r,shift right =2]  & F(X \times_{\X} X)  \arrow[l,dotted] \arrow[r,shift left =4] \arrow [r, shift right =4] \arrow [r]   &  \arrow[l,shift left=2, dotted] \arrow[l, shift right=2,dotted]\cdots 
			\end{tikzcd} ).
		\end{equation}
		
	\end{theorem}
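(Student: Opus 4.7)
The plan is to define $F_{ext}$ on objects via the Čech-nerve formula for a chosen atlas, establish independence of that choice using a bisimplicial descent argument, extend to morphisms via compatible atlases, and finally verify the sheaf condition on $(\op{St}_{\Ca}, \op{\T-loc})$. The key input throughout is \cref{covlocsecdesc}, which tells us that $F$, as an $\infty$-sheaf on $\Ca$, automatically satisfies descent along any morphism in $\Ca$ admitting $\T$-local sections.

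First, for each $\X \in \op{St}_{\Ca}$, I fix an atlas $x : X \to \X$ admitting $\T$-local sections. Condition $(3)$ of \cref{stacklocsecdef} makes $x$ representable, so the terms $X^{[n]} := X \times_{\X} \cdots \times_{\X} X$ of its Čech nerve all lie in $\Ca$, and since $\D^{op}$ admits geometric realizations the limit $F_{ext}(\X) := \op{lim}_{[n] \in \Delta^{op}} F(X^{[n]})$ exists in $\D$.

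To show independence of $x$, I compare two atlases $x_1 : X_1 \to \X$ and $x_2 : X_2 \to \X$ via the bisimplicial object $W_{m,n} := X_1^{[m]} \times_{\X} X_2^{[n]}$ of $\Ca$. By \cref{tlocrep}, the projection $W_{m,n} \to X_1^{[m]}$ is the pullback of the structure map $X_2^{[n]} \to \X$, which admits $\T$-local sections as an iterated composition of pullbacks of $x_2$; symmetrically for $W_{m,n} \to X_2^{[n]}$. Applying \cref{covlocsecdesc} and then \cref{equivprodlemma} row-wise and column-wise identifies $\op{lim}_{(m,n)} F(W_{m,n})$ with each of $\op{lim}_m F(X_1^{[m]})$ and $\op{lim}_n F(X_2^{[n]})$. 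For a morphism $f : \X \to \Y$, I use \cref{tlocalsectionanotherformulation} to realize $f$ by a commutative square in $\op{St}_{\Ca}$ with atlases as its verticals and top edge in $\Ca$; the induced map of Čech nerves yields a candidate $F_{ext}(\Y) \to F_{ext}(\X)$, and the bisimplicial argument again handles independence. The sheaf condition for $(\op{St}_{\Ca}, \op{\T-loc})$ reduces along the same lines to descent in $\Ca$, which is \cref{covlocsecdesc}.

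The main obstacle is not any of these individual identifications but assembling them into a genuine functor of $\infty$-categories $F_{ext} : N^D_{\bb}(\op{St}_{\Ca})^{op} \to \D$. Each object of $\op{St}_{\Ca}$ carries a nontrivial groupoid of atlases, and morphisms lift to morphisms of atlases only after refinement, so strict functoriality of the Čech-nerve construction must be encoded through the enhanced partial simplicial nerve $N^D_{\bb}$ of Liu–Zheng. I expect the proof to handle this by building an intermediate category of pairs $(\X, x : X \to \X)$ on which the Čech-nerve assignment is strictly functorial, and then showing that forgetting the atlas becomes an equivalence after passing to $N^D_{\bb}$. The restrictive atlas hypothesis in \cref{stacklocsecdef} is precisely what should make this equivalence accessible without invoking the full generality of \cite[Proposition 4.1.1]{liu2017enhanced}.
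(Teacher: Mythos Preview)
Your proposal is correct and follows essentially the same route as the paper. The paper implements exactly what you anticipate in your final paragraph: it introduces the $\infty$-category $\op{Cov}(\op{St}_{\Ca})$ of pairs $(\X, x:X\to\X)$ (realized as \v{C}ech nerves), defines $\phi:\op{Cov}(\op{St}_{\Ca})^{op}\to\D$ by applying $F$ levelwise and taking the limit, and then obtains $F_{ext}$ by factoring $\phi$ through the forgetful map $p:\op{Cov}(\op{St}_{\Ca})\to N^D_{\bb}(\op{St}_{\Ca})$.

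One point of precision: the forgetful map $p$ is not itself an equivalence, and the paper does not argue that it becomes one ``after passing to $N^D_{\bb}$''. Rather, \cref{covislocalization} shows that $p$ exhibits $N^D_{\bb}(\op{St}_{\Ca})$ as the localization of $\op{Cov}(\op{St}_{\Ca})$ at the class $R$ of refinement morphisms (maps of atlases over a fixed $\X$). Your bisimplicial/\cref{equivprodlemma} argument is then exactly what shows $\phi$ inverts $R$, so the universal property of localization produces $F_{ext}$. The sheaf condition is verified just as you describe, via \cref{tlocalsectionanotherformulation} and \cref{equivprodlemma}. So the only adjustment needed is to replace ``becomes an equivalence'' with ``is a localization at refinements'', which is the content of \cref{covislocalization}.
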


	\textbf{Idea of constructing the functor $F_{ext}$:}\label{Fextidea} 
	Given any zero simplex $\sigma_0$ i.e. an object $\X \in N^D_{\bb}(\op{St}_{\Ca})$, we would like to define $F_{ext}(\X)$ by \cref{keydefofFext}. As this definition depends on the atlas $x$, we start with an intrinsic description considering all \v{C}ech nerves of atlases of objects of $\op{St}_{\Ca}$.\\
	Let $\op{Cov}(\op{St}_{\Ca})$ be the subctegory of the $\infty$-category $\op{Fun}(N(\Delta_+)^{op},N^D_{\bb}(\op{St}_{\Ca}))$ spanned by objects which are \v{C}ech nerves of atlases admitting $\T$-local sections of objects of $\op{St}_{\Ca}$. We shall denote the objects of $\op{Cov}(\op{St}_{\Ca})$ by pairs $(\X,x: X \to \X)$ where $\X \in \op{St}_{\Ca}$ and $x: X \to \X$ is an atlas admitting $\T$-local sections.\\
	
	The inclusion $[-1] \hookrightarrow N(\Delta_+)$ induces the morphism 
	\[ p: \op{Cov}(\op{St}_{\Ca})^{op} \to N^D_{\bb}(\op{St}_{\Ca})^{op}. \]
	
	As every object in $\op{St}_{\Ca}$ admits a cover, the morphism $p$ is surjective on the level of objects.
	\begin{claim}\label{covnonempty}
		The morphism $p: \op{Cov}(\op{St}_{\Ca})  \to N^D_{\bb}(\op{St}_{\Ca})$ is surjective on $n$-simplicies. More precisely, let $\sigma_n$ be an $n$-simplex of $N^D_{\bb}(\op{St}_{\Ca})$ where $ n \ge 1$. Then there exists a map
		\[ \sigma^1_n: \Delta^1 \times \Delta^n \to N^D_{\bb}(\op{St}_{\Ca}) \] 
		such that 
		\begin{enumerate}
			\item $\sigma^1_n|_{[0] \times \Delta^n}$ factors through $N(\Ca) \subset N^D_{\bb}(\op{St}_{\Ca})$,
			\item $\sigma^1_n([1] \times \Delta^n) = \sigma_n$ and
			\item $\sigma^1_n(\Delta^1 \times [j]) $ is a morphism admitting $\T$-local sections for all $0 \le j \le n$.
		\end{enumerate}
	\end{claim}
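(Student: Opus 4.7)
The proof proceeds by explicit construction. Unwinding the definition of $n$-simplex in the Duskin nerve $N^D_{\bb}(\op{St}_{\Ca})$, $\sigma_n$ corresponds to a sequence of objects $\X_0,\ldots,\X_n$ in $\op{St}_{\Ca}$, morphisms $f_{ij}\colon \X_i \to \X_j$ for $i \le j$, and coherent invertible 2-cells $\alpha_{ijk}\colon f_{jk} \circ f_{ij} \Rightarrow f_{ik}$ on the 2-faces. My goal is to build a parallel chain of objects $X_0,\ldots,X_n$ in $\Ca$ with atlases $x_j\colon X_j \to \X_j$ admitting $\T$-local sections, and then assemble everything into a prism $\Delta^1 \times \Delta^n \to N^D_{\bb}(\op{St}_{\Ca})$.

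I would construct the $X_j$ iteratively from right to left. Start by choosing an atlas $x_n\colon X_n \to \X_n$ admitting $\T$-local sections using property (2) of \cref{stacklocsecdef}. Given $x_{j+1}$, form the fiber product $\X'_j := \X_j \times_{\X_{j+1}} X_{j+1}$ in $\op{St}_{\Ca}$; the projection $\X'_j \to \X_j$ admits $\T$-local sections as the base change of $x_{j+1}$ (\cref{covpullbandcomp}). Pick an atlas $y_j\colon X_j \to \X'_j$ admitting $\T$-local sections, and take $x_j$ to be the composite $X_j \to \X'_j \to \X_j$. By composition stability (\cref{covpullbandcomp}), $x_j$ admits $\T$-local sections and thus serves as the required atlas. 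The second projection $X_j \to \X'_j \to X_{j+1}$ is a morphism in $\op{St}_{\Ca}$ between objects of $\Ca$, so by full faithfulness of the inclusion $i_{\Ca}\colon \Ca \hookrightarrow \op{St}_{\Ca}$ it descends to a morphism $g_{j,j+1}$ in $\Ca$; I set $g_{ij} := g_{j-1,j} \circ \cdots \circ g_{i,i+1}$ for $i < j$.

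To assemble this data into the map $\sigma^1_n\colon \Delta^1 \times \Delta^n \to N^D_{\bb}(\op{St}_{\Ca})$, I would put $\sigma^1_n|_{[1] \times \Delta^n} = \sigma_n$, take $\sigma^1_n|_{[0] \times \Delta^n}$ to be the simplex of $N(\Ca)$ defined by the chain $X_0 \to X_1 \to \cdots \to X_n$ (which is genuinely a simplex of $N(\Ca)$ since $\Ca$ is a 1-category and all compositions are strict), and set each vertical edge $\sigma^1_n|_{\Delta^1 \times \{j\}}$ to be $x_j$. The required invertible 2-cells on the mixed 2-faces of the prism come canonically from the universal properties of the iterated fiber products $\X'_j$. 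By construction, all three conditions of the claim hold.

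The main obstacle is coherence bookkeeping: verifying that the 2-cells populated in the prism satisfy the cocycle conditions required for a genuine $(n+1)$-simplex of the Duskin nerve $N^D_{\bb}(\op{St}_{\Ca})$. This is a standard $(2,1)$-categorical manipulation based on the universal properties of the fiber products $\X'_j$, and ultimately hinges on the representability of diagonals (property (3) of \cref{stacklocsecdef}) to guarantee that the relevant 2-cells are uniquely determined.
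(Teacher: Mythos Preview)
Your proposal is correct and follows essentially the same approach as the paper: an inductive construction of the atlases $X_j$ from right to left, each obtained by pulling back the already-chosen $X_{j+1}$ along $\X_j \to \X_{j+1}$ and then choosing an atlas of that pullback. The paper's proof is much terser --- it reduces to the $n=1$ case (recorded separately as \cref{covnonemptyclaim}) and then says ``the general case follows by induction'' --- whereas you spell out the iteration explicitly and flag the coherence bookkeeping, but the underlying construction is the same.

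One small comment: your invocation of property~(3) (representability of the diagonal) at the end is not really what drives the coherence; the required $2$-cells are determined by the universal property of the iterated fiber products $\X'_j$ in the $(2,1)$-category $\op{St}_{\Ca}$, and the fact that the top face lives in the $1$-category $\Ca$ (so all $2$-cells there are identities). Diagonal representability is what guarantees that $\X'_j$ already lies over an object of $\Ca$ in the relevant direction, but it is not the mechanism that pins down the $2$-cells.
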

	\begin{proof}[Proof of the claim]
		The case $n=1$ follows from \cref{covnonemptyclaim}. The general case follows by induction as for any $n$-simplex $(\X_0,\dots,\X_n)$, there exists compatible choice of atlas $(X_1,\dots,X_n)$ by induction. Applying the construction for $n=1$ to the atlas $X_0$ allows us to extend the family to $(X_0,X_1,\dots,X_n)$.\\
		
		This proves the surjectivity of $p$ on the level of simplices because the \v{C}ech nerve of $\sigma^1_n$ (considered as an edge $ \Delta^1 \to \op{Fun}(\Delta^n,N^D_{\bb}(\op{St}_{\Ca}))$) produces an element in $n$-simplex of $\op{Cov}(\op{St}_{\Ca})$.
	\end{proof}
	
	Morphisms of coverings induce morphisms of \v{C}ech nerves that are mapped to the identity via $p$. We shall denote
	the collection of all these morphisms in $\op{Cov}(\op{St}_{\Ca})$ by $R$. These are called \emph{refinements of coverings}. 
	
	\begin{proposition}\label{covislocalization}
		The morphism $p: \op{Cov}(\op{St}_{\Ca})^{op} \to N^D_{\bb}(\op{St}_{\Ca})^{op}$ is a localization of $\op{Cov}(\op{St}_{\Ca})$ along $R$. 
	\end{proposition}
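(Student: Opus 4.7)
The strategy is to verify the universal property of a localization directly, showing that $p$ inverts every refinement and that the induced comparison functor $\bar p\colon \op{Cov}(\op{St}_{\Ca})^{op}[R^{-1}] \to N^D_{\bb}(\op{St}_{\Ca})^{op}$ is an equivalence of $\infty$-categories.

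First, I would observe that by the very definition of $R$ each refinement is a morphism between \v Cech nerves covering an identity on the augmentation, so $p(r)=\op{id}$ for every $r\in R$. Consequently $p$ factors canonically through the localization $\op{Cov}(\op{St}_{\Ca})^{op} \to \op{Cov}(\op{St}_{\Ca})^{op}[R^{-1}]$, producing $\bar p$. Next, I would establish essential surjectivity of $\bar p$ at every simplicial level: for objects this is immediate from \cref{stacklocsecdef}(2), and for $n$-simplices with $n\ge 1$ it is precisely \cref{covnonempty}, since the \v Cech nerve of the compatible choice of atlas $\sigma_n^1$ produced there yields a lift of $\sigma_n$ into $\op{Cov}(\op{St}_{\Ca})$.

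For fully faithfulness, the main step is to show that the space of lifts of $p$ over each simplex $\sigma_n$ is weakly contractible after inverting $R$. The key input is that, for any $\X\in\op{St}_{\Ca}$, the category of atlases of $\X$ admitting $\T$-local sections, with refinements as morphisms, is filtered: given two such atlases $x_1\colon X_1\to \X$ and $x_2\colon X_2\to\X$, the fiber product $X_1\times_{\X} X_2$ still admits $\T$-local sections over $\X$ by \cref{covpullbandcomp}, and then applying \cref{covnonempty} to this composite produces a scheme $X_{12}\in\Ca$ with an atlas map to $\X$ that simultaneously refines $x_1$ and $x_2$. Since the nerve of a filtered $\infty$-category is weakly contractible, this settles the case $n=0$. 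The same common-refinement procedure, applied inductively along the simplicial structure via repeated use of \cref{covnonempty}, extends weak contractibility to higher-dimensional simplicial lifts.

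Finally, I would invoke the standard localization criterion — the $\infty$-categorical analogue of Quillen's Theorem~A combined with essential surjectivity — to conclude that $\bar p$ is a categorical equivalence. The main obstacle I anticipate is not any individual step but the coherence: passing from the easy statement \emph{any two atlases admit a common refinement} to a fully coherent description of the lifting space for a general $n$-simplex requires a careful inductive argument mirroring the one already used in \cref{covnonempty}, and verifying that iterated fiber products of atlases furnish the higher cells needed for genuine contractibility (rather than mere connectedness) of the mapping spaces in $\op{Cov}(\op{St}_{\Ca})^{op}[R^{-1}]$ over $N^D_{\bb}(\op{St}_{\Ca})^{op}$.
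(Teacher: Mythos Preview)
Your overall strategy differs from the paper's, and the difference matters. The paper does not attempt to verify essential surjectivity and fully faithfulness separately; instead it shows directly that the induced map $p'\colon \op{Cov}(\op{St}_{\Ca})[R^{-1}]^{op}\to N^D_{\bb}(\op{St}_{\Ca})^{op}$ is a \emph{trivial fibration} of simplicial sets. Concretely, given a lifting problem against $\partial\Delta^n\hookrightarrow\Delta^n$, one first uses \cref{covnonempty} to lift $\sigma_n$ to some $\sigma'_n$ in $\op{Cov}(\op{St}_{\Ca})$, then takes fiber products of the atlases appearing in $\sigma'_n$ with those already prescribed by $\tau_n$ on the boundary; the two projections out of these fiber products lie in $R$, hence become equivalences after localizing, and a standard isofibration lifting argument (applied to $\op{Fun}(\Delta^n,-)\to\op{Fun}(\partial\Delta^n,-)$) then produces the filler. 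This is exactly the common-refinement idea you identify, but packaged so that no abstract criterion is needed.

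Your route has a genuine gap beyond the coherence issue you flag. The phrase ``the $\infty$-categorical analogue of Quillen's Theorem~A combined with essential surjectivity'' is not a theorem one can simply invoke: Quillen's Theorem~A gives a weak homotopy equivalence of underlying spaces from contractible comma categories, not a categorical equivalence, and upgrading it requires either that $\bar p$ be a (co)cartesian fibration or that one compute mapping spaces in the localization directly (e.g.\ via a calculus of fractions). You verify neither. Showing that the category of atlases over a fixed $\X$ is filtered does give contractible fibers over \emph{objects}, but this alone says nothing about mapping spaces in $\op{Cov}(\op{St}_{\Ca})[R^{-1}]$ unless you first establish that $\bar p$ has the appropriate fibrancy, or that $R$ admits a calculus of right fractions so that the mapping spaces are computed by filtered colimits over refinements. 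Either of those verifications is real work, and once you start doing it you are essentially reconstructing the paper's lifting argument. The paper's trivial-fibration approach is therefore not just a stylistic alternative: it is precisely what lets one bypass the question of how mapping spaces in the localization are computed.
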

	
	\begin{proof}
		As $p$ sends $R$ to equivalences, the morphism $p$ induces a morphism \[p': \op{Cov}(\op{St}_{\Ca})[R^{-1}]^{op}\to N^D_{\bb}(\op{St}_{\Ca})^{op}
		\]
		where $ i: \op{Cov}(\op{St}_{\Ca})^{op} \to \op{Cov}(\op{St}_{\Ca})^{op}[R^{-1}] $ is the anodyne map constructed in existence of localization (\cite[Proposition 6.3.2.1]{kerodon}). We want to show that $p'$ is a categorical equivalence. In particular we show that $p'$ is a trivial fibration of simplicial sets (which is a categorical equivalence by \cite[Proposition 23.11]{Rezk}).\\
		Thus given any commutative diagram of simplicial sets 
		\begin{equation}
			\begin{tikzcd}
				\partial\Delta^n \arrow[r,"\tau_n"] \arrow[d,hookrightarrow] & \op{Cov}(\op{St}_{\Ca})[R^{-1}]^{op} \arrow[d,"p'"] \\
				\Delta^n \arrow[ur,dotted,"\tau'_n"] \arrow[r,"\sigma_n"] & N^D_{\bb}(\op{St}_{\Ca})^{op},
			\end{tikzcd}
		\end{equation}
		we need to show the existence of a dotted arrow such that the diagram commutes. \\
		As the objects of $\op{Cov}(\op{St}_{\Ca})[R^{-1}]^{op}$ and $\op{Cov}(\op{St}_{\Ca})^{op}$ coincide and $p$ is surjective on $0$-simplices, this implies that $p'$ is surjective on $0$-simplices. This shows the claim for $n =0$. \\
		
		Let $n \ge 1$. We shall denote the vertices of $\sigma_n$ and $\tau_n$ by $\X_0,\X_1,\cdots \X_n$ and $(\X_0,x_0: X_0 \to \X),(\X_1,x_1: X_1 \to \X_1), \cdots (\X_n,x_n:X_n \to \X_n)$. As $p$ is surjective on $n$-simplices, there exists a morphism $\sigma'_n: \Delta^n \to \op{Cov}(\op{St}_{\Ca})^{op}$ which lifts $\sigma_n$. Let us denote the vertices of $\sigma'_n$ by $(\X_0,x_0': X_0' \to \X_0),(\X_1,x_1': \X_1' \to \X_1),\dots,(\X_n,x_n':X_n' \to \X_n)$.\\ 
		For each $0 \le i \le n$, the morphism $x''_i: X''_i:= X_i \times _{\X_i} X_i' \to \X_i$ is an atlas admitting $\T$-local sections. The morphisms $\sigma'_n$ and $\tau_n$ induces a morphism  \[\sigma''_n: \partial \Delta^n \to \op{Cov}(\op{St}_{\Ca})[R^{-1}]^{op} \] whose vertices are given by $(\X_0,x''_0),(\X_1,x''_i),\dots,(\X_n,x''_n)$. Note that the projection maps $\op{pr}_i:X_i'' \to X_i$ and $\op{pr}_i':X_i'' \to \X_i'$ are elements of $R$ and therefore become equivalences in the localization. This induces a map \[ f_n: \partial\Delta^n \times \Delta^1 \coprod_{\{0\} \times \partial\Delta^n} \{0\} \times\Delta^n  \to \op{Cov}(\op{St}_{\Ca})[R^{-1}]^{op} \]
		where $f_n|_{\{0\} \times \Delta^n} =\sigma'_n$ , $f_n|_{\{1\} \times \partial\Delta^n} = \sigma''_n$ and $f_n|_{[k] \times \Delta^1} = \op{pr}_k'$ for all $0 \le k \le n$. Applying \cref{isofibrationfunclift} to the morphism $f_n$ induces a morphism $f_n': \Delta^n \times \Delta^1 \to \op{Cov}(\op{St}_{\Ca})[R^{-1}]^{op}$. In particular the morphism $\sigma''_n$ extends to a morphism $\tau''_n: \Delta^n \to \op{Cov}(\op{St}_{\Ca})[R^{-1}]^{op}$. The morphisms $\tau''_n$ and $\tau_n$ produces a map  
		\[ g_n: \partial\Delta^n \times \Delta^1 \coprod_{\partial\Delta^n \times \{1\}}  \Delta^n \times \{1\} \to \op{Cov}(\op{St}_{\Ca})[R^{-1}]^{op} \] 
		where $g_n|_{\Delta^n \times \{1\}} = \tau''_n$, $g_n|_{\partial\Delta^n \times \{0\}} = \tau_n$ and $g_n|_{[k] \times \Delta^1} = \op{pr}_k$ for $ 0 \le k \le n$. As the morphism $\op{Fun}(\Delta^n,\D) \to \op{Fun}(\partial\Delta^n,\D)$ is an isofibration for $n \ge 1$,(\cite[Proposition 2.2.5]{land2021introduction}), $g_n$ extends to a morphism $g_n': \Delta^n \times \Delta^1 \to \op{Cov}(\op{St}_{\Ca})[R^{-1}]^{op}$. In particular, we have extended $\tau_n$ to a morphism $\tau'_n: \Delta^n \to \op{Cov}(\op{St}_{\Ca})[R^{-1}]^{op}$. Thus there exists a solution to the lifting problem. This shows that $p'$ is a trivial fibration.
	\end{proof}
	
	The fact that the morphism $p$ is a localization makes it easy to construct the extension $F_{ext}$ in \cref{thmmain} by first extending $F$ to the \v{C}ech nerves of coverings and notion that the sheaf condition implies that this induces a functor on the localization $\op{Cov}[R^{-1}]$. Let us explain this in detail.
	
	\begin{proof}[Proof of \cref{thmmain}]
		
		\begin{enumerate}
			
			\item  (\textbf{Constructing the functor $F_{ext}$})
			
			We define a morphism 
			\[ \phi: \op{Cov}(\op{St}_{\Ca})^{op} \xrightarrow{F} \op{Fun}(N(\Delta),\D) \xrightarrow{i} \op{Fun}(N(\Delta_+),\D) \xrightarrow{\op{res}|_{[-1]}} \D   \]
			as follows:
			\begin{enumerate}
				\item The map \[F: \op{Cov}(\op{St}_{\Ca})^{op} \to \op{Fun}(N(\Delta),\D) \] is the functor $F$ applied to the restricted simplicial object $X_{\bb,x}$ of an object $X^{+}_{\bb,x}$ of $\op{Cov}(\sigma_0)$. 
				\item To associate limit diagrams to cosimplicial objects, we apply \cite[Corollary 4.3.2.16]{HTT}. Let $\Ca^{(0)} = N(\Delta)$ and $\Ca = N(\Delta_+)$. As $\D$ admits geometric realizations, applying \cite[Corollary 4.3.2.16]{HTT}, we get a morphism 
				\[ i: \op{Fun}(N(\Delta),\D) \to \op{Fun}(N(\Delta_+),\D). \]
				On the level of objects, the morphism $i$ sends a cosimplicial object to an augmented cosimplicial object given by its limit diagram.
				
				\item The map 
				\[ \op{res}|_{[-1]}: \op{Fun}(N(\Delta_+),\D) \to \D  \]
				is induced by the inclusion map $[-1] \to \Delta_+$. On the level of objects, it sends an augmented cosimplicial object $Y^+_{\bb}$ to $Y_{-1}$.
			\end{enumerate}
			For any object $X^+_{\bb,x}$ in $\op{Cov}(\op{St}_{\Ca})$, we claim that $\phi(X^+_{\bb,x}) \cong \op{lim}_{\bb \in \Delta}F(X_{\bb,x})$.  This follows from the sheaf condition, namely let $X^+_{\bb,x}$ and $X^{'+}_{\bb,x}$ be two \v{C}ech nerves of two atlases $x: X \to \X$ and $x': X \to \X$. Applying \cref{equivprodlemma} to the pullback square
			\begin{equation}
				\begin{tikzcd}
					X \times_{\X} X' \arrow[r] \arrow[d] & X \arrow[d,"x"] \\
					X' \arrow[r,"x'"] & \X,
				\end{tikzcd}
			\end{equation}
			we get that the morphisms $\phi(X^+_{\bb,x}) \to \phi(X^+_{\bb,x} \times X^{'+}_{\bb,x})$ and $\phi(X^+_{\bb,x}) \to \phi(X^+_{\bb,x}\times X^{'+}_{\bb,x})$ are equivalences because $F$ satisfies descent along morphisms admitting $\T$-local sections (\cref{covlocsecdesc}). Morever if $f: (\X,x: X \to \X) \to (\X, x': X' \to \X)$ is a morphism in $\op{Cov}(\op{St}_{\Ca})$, then the above argument shows us that $\phi$ sends $f$ to equivalences. Thus $\phi$ maps every element of $R$ to equivalences. As the morphism $p$ is a localization (\cref{covislocalization}), there exists a functor  
			\[ F_{ext}: N^D_{\bb}(\op{St}_{\Ca})^{op} \to \D \] such that $\phi \cong F_{ext} \circ p$ in $\op{Fun}(\op{Cov}(\op{St}_{\Ca})^{op},\D)$. The equivalence $\phi \circ F_{ext} \circ p$ gives us that $F_{ext}(\X)$ can be computed as a simplicial limit over any \v{C}ech cover of an atlas admitting $\T$-local sections. 
			\item (\textbf{$F_{ext}$ is an  $\infty$-sheaf}) For showing that $F_{ext}$ is an $\infty$-sheaf, we need to check that for any morphism $p: \Y \to \X$  admitting $\T$-local sections, $p$ satisfies $F_{ext}$-descent. Thus we want to show that
			\[ F_{ext}(\X) \cong \op{lim}_{n\in \Delta} \F_{ext}(X^+_{\bb,p}).   \]
			As $p$ admits $\T$-local sections, there exists a commutative diagram	
			\begin{center}
				\begin{tikzcd}
					Y \arrow[r,"q'"] \arrow[d,"p'"] & \Y \arrow[d,"p"] \\
					X \arrow[r,"q"] & \X
				\end{tikzcd}
			\end{center}
			
			where $p'$ admits $\T$-local sections and $q',q'$ are atlases admitting $\T$-local sections (\cref{tlocalsectionanotherformulation}). By assumption, $p'$ satisfies $F_{ext}$-descent. Also $q$ and $q'$ satisfy $F_{ext}$-descent by definition of the functor $F_{ext}$.  Then applying \cref{equivprodlemma}, we get that $p$ satisfies $F_{ext}$-descent. This completes the proof.
		\end{enumerate}
	\end{proof}

	\subsection{The motivic stable homotopy category of an algebraic stack.}
	
	In \cite{robalo2013noncommutative}, Robalo explains that the construction of motivic stable homotopy theory (\cite{MorVod}) can be viewed as a functor taking values in $\infty$-categories.\\
	
	Let $\Schf$ denote the category of Noetherian schemes with finite Krull dimension. Robalo uses the construction of $\SH$ to define a functor 
	\begin{equation}
		\Sho(-): \Schf^{op}\to \op{CAlg}(\op{Pr}^L_{stb}). ~~~ \cite[\text{Section 9.1}]{Robalothesis}
	\end{equation}
	Let us unravel the information contained in this functor. As the classical $\op{SH}$ admits a symmetric monoidal structure, the $\infty$-category $\Sho(S)$ is a symmetric monoidal $\infty$-category, i.e. it comes equipped with a coCartesian fibration (\cite[Definition 2.4.2.1]{HTT}) $p_S:\Sho(S) \to N(\op{Fin}_*)$  such that $\Sho(S)_{\langle n \rangle} \cong \Sho(S)^{\times n}_{\langle 1 \rangle}$. Denote $\SH(S):= \Sho(S)_{\langle 1 \rangle}$. The coCartesian fibration encodes the symmetric monoidal structure of the $\infty$- category $\SH(S)$ in a coherent way.\\
	The $\infty$-category $\SH(S)$ is also presentable as it arises from localization of presheaves of smooth schemes over $S$ (\cite[Theorem 5.5.1.1]{HTT}). It is also a stable $\infty$-category (\cite[Definition 1.1.1.9]{HA}) as it inherits the triangulated structure of $\op{SH}$ constructed in \cite{MorVod}. As pullback morphism for a morphism $f$ in $\Schf$ is colimit preserving, the $\infty$-category $\Sho(S)$ lands in the $\infty$-category of presentable stable symmetric monoidal $\infty$-categories which is denoted by $\op{CAlg}(\op{Pr}^L_{stb})$. \\
	The  $\infty$-category $\SH(S)$  is the called the stable motivic homotopy category of $S$. \\
	As explained in \cite[Remark 9.3.1]{Robalothesis}, the stable motivic homotopy theory can be extended to all schemes.
	The functor $\Sho(-)$ is a Nisnevich sheaf (\cite[Proposition 6.24]{Equsixop}). 
	\begin{corollary}\label{Shstckdef}
		The functor $\Sho(-)$ extends to an $\infty$-sheaf \[\Shext(-): \Nst^{op} \to \op{CAlg}(\op{Pr}^L_{stb}). \] \\
		
		Morever, for any algebraic stack $\X \in \Nst$ that admits a schematic atlas \linebreak $x: X \to \X$, one has
		
		\begin{equation}
			\Shext(\X) \cong \op{lim}\Big(\begin{tikzcd}
				\Sho(X) \arrow[r, shift right =2] \arrow[r,shift left =2] & \Sho(X \times_{\X} X) \arrow[l,dotted] \arrow [r,shift right =4] \arrow[r] \arrow[r,shift left =4] & \cdots \arrow[l, shift left =2 ,dotted] \arrow[l, shift right =2, dotted]
			\end{tikzcd} \Big)
		\end{equation}
		where the limit is over the \v{C}ech nerve of $x$.

	\end{corollary}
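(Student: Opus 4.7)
The plan is to deduce this as a direct application of \cref{thmmain}, carried out in two stages corresponding to the two-step construction discussed before \cref{Nislocdefinition}. The first stage takes $(\Ca,\T) = (\Schf, \text{Nis})$ and $\op{St}_{\Ca} = \Nstalg$, producing an intermediate extension $\Sho_{\op{sp}}: N^D_{\bb}(\Nstalg)^{op} \to \op{CAlg}(\op{Pr}^L_{stb})$. The second stage takes $(\Ca,\T) = (\Nstalg, \text{Nis-loc})$ and $\op{St}_{\Ca} = \Nst$, producing the desired $\Shext$. Both categories of stacks admitting $\T$-local sections have been verified in \cref{Nislocdefinition}.

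The hypotheses of \cref{thmmain} to check at each stage are: (i) the input is an $\infty$-sheaf on the relevant site, (ii) the target $\infty$-category admits limits of cosimplicial diagrams, and (iii) the set-theoretic coproduct conditions from \cite[Proposition A.3.3.1]{SAG} hold. Condition (i) at the first stage is \cite[Proposition 6.24]{Equsixop}; at the second stage it is provided by the output of the first application of \cref{thmmain}. Condition (ii) is clear because $\op{CAlg}(\op{Pr}^L_{stb})$ is presentable and hence has all small limits, so its opposite admits geometric realizations. Condition (iii) was already flagged before \cref{thmmain} as holding in the relevant examples.

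For the explicit limit formula, let $x: X \to \X$ be a schematic atlas with $X \in \Schf$. Because $x$ is representable by schemes (and the diagonal of $\X$ is representable), every term $X^{\times_{\X} n+1}$ of the \v{C}ech nerve is itself a scheme. Formula \eqref{keydefofFext} from \cref{thmmain} applied in the second stage expresses $\Shext(\X)$ as the totalization of $[n] \mapsto \Sho_{\op{sp}}(X^{\times_{\X} n+1})$, and since $\Sho_{\op{sp}}$ agrees with $\Sho$ on schemes by construction (apply the first-stage formula to the trivial atlas of a scheme), the displayed formula follows.

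No substantial obstacle arises, as the technical work has been absorbed into \cref{thmmain}. The only mild subtlety worth flagging is that \eqref{keydefofFext} strictly applies when the atlas admits Nis-local sections; since $\X \in \Nst$ admits some atlas $x_0$ with this property, one compares the \v{C}ech nerves of $x$, $x_0$, and $x \times_{\X} x_0$ via \cref{equivprodlemma}, exactly as in the construction of $F_{ext}$ inside the proof of \cref{thmmain}, to conclude that any schematic atlas admitting Nis-local sections computes the same limit.
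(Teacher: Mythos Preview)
Your two-stage application of \cref{thmmain} is exactly the paper's approach, and your added explanation of why $\Sho_{\op{sp}}$ reduces to $\Sho$ on the schematic \v{C}ech nerve is a helpful elaboration the paper leaves implicit. One small correction: $\op{CAlg}(\op{Pr}^L_{stb})$ is not presentable (it is a large $\infty$-category of large categories), so your justification for condition (ii) should instead cite \cite[Proposition 3.2.2.1]{HA} as the paper does; also, your final paragraph is unnecessary, since \cref{thmmain} already asserts that \emph{any} atlas admitting $\T$-local sections computes the limit, not only a distinguished one.
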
 
	\begin{proof}
	As $\op{CAlg}(\op{Pr}^L_{stb})$ admit small limits (\cite[Proposition 3.2.2.1]{HA}), we can apply \cref{thmmain} to the functor $\Sho(-)$ with $\T = \op{Nis}$ and $N^D_{\bb}(\op{St}_{\Ca}) = \Nstalg$. This gives us an $\infty$-sheaf  $\SAlg(-): \Nstalg^{op} \to \op{CAlg}(\op{Pr}^L_{stb})$. \\
		Applying the theorem again to the $\infty$-sheaf $\SAlg(-)$ with $N(\Ca) = \Nstalg$, $\T=\op{Nis}$ and $N^D_{\bb}(\op{St}_{\Ca})=\Nst$, one gets an $\infty$-sheaf $\Shext(-)$. \\
		The limit description is a consequence of \cref{keydefofFext} applied to the functor $\Shext(\X)$. 
		
	\end{proof}
	\begin{notation}
		For any algebraic stack $\X \in \op{Nis-locSt}$, we shall denote the underlying presentable stable $\infty$-category of the symmetric monoidal $\infty$-category $\Shext(\X)$ by $\SHe(\X)$. We shall call $\SHe(\X)$ to be the \textit{stable motivic homotopy category of $\X$}.
	\end{notation}
	\begin{remark}
	\begin{enumerate}
	    \item $\Shext(\X)$ can also be computed as a limit over the semisimplicial category $\Delta_s$ as $\Delta_s \hookrightarrow \Delta$ is cofinal.
	    \item 	Recall from \cref{quotstackinnst} that for quotient stacks $[X/G]$, the atlas $X \to [X/G]$ does not admit Nisnevich-local sections and thus we need to replace it by $X \times^G \op{GL}_n$ to compute $\Shext$.\\ 
		In \cite{Equsixop}, Hoyois defines $SH$ for global quotient stacks by tame reductive groups. His construction a priori may depend on choice of presentation of the quotient stack. Our construction has the advantage that it is independent of such a choice and it morever allows us to drop the tameness assumption.
	\end{enumerate}
     
\end{remark}
In \cite[Section 12]{khan2021generalized}, Khan and Ravi introduced the notion of limit-extended functor $\mathbf{SH}_{\triangleleft}(-)$ for derived algebraic stacks.
For any $\X \in \op{Nis-locSt}$, let $\op{Lis_{\X}}$ be the $\infty$-category of pairs $(t,T)$ where $T$ is a scheme and $t: T \to \X$. The functor $\mathbf{SH}_{\triangleleft}(\X)$ ((\cite[Construction 12.1]{khan2021generalized}) is defined as 
 \begin{equation}
     \mathbf{SH}_{\triangleleft}(\X):= \varprojlim_{(t,T) \in \op{Lis}_{\X}}\Sho(T).
 \end{equation}
We have a canonical functor $\mathbf{SH}_{\triangleleft}(\X) \to \Shext(\X)$ which is induced by restricting it to the subcategory $\op{Lis}_{X^+_{\bb,x},\X}$ of $\op{Lis}_{\X}$ consisting of objects of semisimplicial \v{C}ech nerve of $x: X \to \X$ where $x$ is an atlas admitting Nisnevich-local sections.\\  
The following compares $\Shext(\X)$ to their construction.  
\begin{corollary}\label{limitextendedcomparison}
    The canonical functor 
    \begin{equation}
        \mathbf{SH}_{\triangleleft}(\X) \to \Shext(\X)
    \end{equation}
    is an equivalence.
\end{corollary}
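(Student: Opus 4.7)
The plan is to construct an explicit inverse $\Phi: \Shext(\X) \to \mathbf{SH}_{\triangleleft}(\X)$ to the canonical map $\Psi: \mathbf{SH}_{\triangleleft}(\X) \to \Shext(\X)$ using the universal property of right Kan extensions, and then to verify that the two compositions are identities by exploiting Nisnevich descent for $\Sho$ (\cref{covlocsecdesc}).

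\textbf{Construction of $\Phi$.} The defining formula $\mathbf{SH}_{\triangleleft}(\X) = \varprojlim_{(t,T) \in \op{Lis}_\X} \Sho(T)$ exhibits $\mathbf{SH}_{\triangleleft}$ as the right Kan extension of $\Sho$ along the inclusion $N(\Sc)^{op} \hookrightarrow N^D_\b(\op{Nis-locSt})^{op}$. Since $\Shext$ is a functor $N^D_\b(\op{Nis-locSt})^{op} \to \op{CAlg}(\op{Pr}^L_{stb})$ whose restriction to $N(\Sc)^{op}$ is canonically $\Sho$ (by construction in \cref{Shstckdef}), the universal property of right Kan extensions produces a canonical natural transformation $\Phi: \Shext \to \mathbf{SH}_{\triangleleft}$. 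Concretely, for each $\X$ the functor $\Phi(\X)$ is induced by the cone whose legs are the pullback functors $t^*: \Shext(\X) \to \Shext(T) = \Sho(T)$ indexed by $(t, T) \in \op{Lis}_\X$, the higher coherences of this cone being supplied by the functoriality of $\Shext$.

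\textbf{Verifying $\Psi \circ \Phi = \op{id}$.} Using the presentation $\Shext(\X) \simeq \varprojlim_n \Sho(X_n)$ supplied by \cref{thmmain}, it suffices to check that for each $n$, post-composition with the projection $\Shext(\X) \to \Sho(X_n)$ gives that same projection back. Since $\Psi$ is by definition the restriction of the limit along $\op{Lis}_{X^+_{\b,x},\X} \hookrightarrow \op{Lis}_\X$, the composite $\mathbf{SH}_{\triangleleft}(\X) \xrightarrow{\Psi} \Shext(\X) \to \Sho(X_n)$ is exactly the projection of $\mathbf{SH}_{\triangleleft}(\X)$ at the object $(X_n \to \X) \in \op{Lis}_\X$. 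Precomposing with $\Phi$ and invoking its construction, the full composition becomes the pullback $(X_n \to \X)^*: \Shext(\X) \to \Sho(X_n)$, which is exactly the Čech projection of $\Shext(\X)$.

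\textbf{Verifying $\Phi \circ \Psi = \op{id}$ and main obstacle.} By the limit presentation of $\mathbf{SH}_{\triangleleft}(\X)$, it suffices to check agreement after projecting to each $\Sho(T)$, $(t, T) \in \op{Lis}_\X$. Here \cref{covpullbandcomp} gives that the base change $X \times_\X T \to T$ admits Nisnevich-local sections, so \cref{covlocsecdesc} yields $\Sho(T) \simeq \varprojlim_n \Sho(X_n \times_\X T)$. Both candidate maps $\mathbf{SH}_{\triangleleft}(\X) \to \Sho(T)$ agree after further projection to each $\Sho(X_n \times_\X T)$: they both reduce, via the factorization $X_n \times_\X T \to X_n \to \X$ and the previous step, to the canonical projection at $(X_n \times_\X T \to \X)$. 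The main obstacle is the $\infty$-categorical bookkeeping, namely organizing the pullback functors $t^*$ into a coherent cone on $\op{Lis}_\X$ and invoking the universal properties of both limits with the correct higher coherences. Once $\Phi$ is obtained from the Kan-extension universal property, the rest follows cleanly from Nisnevich descent and the Čech-limit description of $\Shext$.
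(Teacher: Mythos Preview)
Your proposal is correct and follows essentially the same route as the paper: construct an inverse to $\Psi$ via the cone of pullback functors $t^*:\Shext(\X)\to\Sho(T)$, then verify the two compositions using that for every $(t,T)\in\op{Lis}_\X$ the base change $X\times_\X T\to T$ admits Nisnevich-local sections, so $\Sho(T)\simeq\varprojlim_n\Sho(X_n\times_\X T)$ by \cref{covlocsecdesc}. The paper packages the inverse more concretely (pullbacks $\Sho(X^n_\X)\to\Sho(P^n)$ and a double limit) and simply asserts that it is inverse ``by construction''; your framing via the universal property of right Kan extension is a cleaner way to produce the same map, and you are more careful in checking both $\Psi\circ\Phi$ and $\Phi\circ\Psi$.

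One small caution: your identification of $\mathbf{SH}_{\triangleleft}$ with the right Kan extension along $N(\Sc)^{op}\hookrightarrow N^D_\bb(\op{Nis-locSt})^{op}$ is literally correct only if $\op{Lis}_\X$ is the full slice of schemes over $\X$; if (as the name suggests and as in \cite{khan2021generalized}) it records only smooth morphisms, that sentence should be rephrased. This does not affect your argument, since the cone $(t^*)_{(t,T)}$ exists either way and the descent step only uses the objects $X_n\times_\X T$, which are smooth over $\X$.
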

\begin{proof} 
    As $\Sho$ satisfies descent along morphism of schemes admitting Nisnevich-local sections, we have 
     \begin{equation}
         \mathbf{SH}_{\triangleleft}(\X) \cong \varprojlim_{(t,T) \in \op{Lis}_{\X}} \varprojlim_{P^n \in \op{Lis}_{P^+_{\bb,t_x},T}} \Sho(P^n)
      \end{equation}
      where $P^n$ is the fiber product of $X \times_{\X} T$ over $T$  $n$ times and $t_x$ is the projection map $t_x : X \times_{\X} T \to T$. Note that the morphism $t_x$ is a morphism of schemes which admits Nisnevich-local sections by the property of map $x$.\\
      We have pullback maps $\Sho(X^n_{\X}) \to \Sho(P^n)$ for every $n$. This induces a map 
      \begin{equation}
          \Shext(\X) \cong \varprojlim_{X^n_{\X} \in \op{Lis}_{X^+_{\bb,x}}} \Sho(X^n_{\X}) \to \varprojlim_{(t,T)\in \op{Lis}_{\X}} \varprojlim_{P^n \in \op{Lis}_{P^+_{\bb,t_x},T}} \Sho(P^n).
      \end{equation}
      This gives us a functor 
      \begin{equation}
          \Shext(\X) \to \mathbf{SH}_{\triangleleft}(\X).
      \end{equation}
      By construction, it follows that this is the inverse of the canonical map. Hence it is an equivalence.
\end{proof}

	The description of the $\infty$-sheaf $\Shext(-)$ gives us the following functors.
	\begin{notation}
		\begin{enumerate}
			\item 	Let $f: \X \to \Y$ be a morphism in $\Nst$. We denote the \textit{pullback functor} $\Shext(f): \Shext(\Y) \to \Shext(\X)$ by $f^{*\otimes} $. We shall also write $f^*: \SHe(\Y) \to 
			\SHe(\X)$ as the functor $f^{*\otimes}$ on the level of underlying $\infty$-categories. As $f^*$ is a colimit preserving functor, the adjoint functor theorem ( \cite[Corollay 5.5.2.9]{HTT}) says there exists a right adjoint  \[f_*: \SHe(X) \to \SHe(Y) \] which we call the \textit{pushforward functor}. 
			\item  	As $\Shext(\X)$ is a symmetric monoidal $\infty$-category, we shall denote the functor induced by the symmetric monoidal structure by \[ -\otimes -: \SHe(\X) \times \SHe(\X) \to \SHe(\X). \]
		\end{enumerate}

	\end{notation}
	
	For a scheme $X$, the $\infty$-category $\Sho(X)$ is closed. Let us explain this notion briefly.\\
	Given any two objects $E$ and $E'$ in $\SH(X)$, one has objects $\op{Hom}(E,E')$ and $\op{Hom}(E',E)$ in $\SH(X)$ with maps $\op{Hom}(E,E') \otimes E \to E'$ and $\op{Hom}(E',E) \otimes E' \to E$ satisfying usual universal properties. In other words, the tensor product realized as a functor $\SH(X) \to \op{Fun}(\SH(X),\SH(X))$ factorizes via $\op{Fun}^L(\SH(X),\SH(X))$ (\cite[Definition 4.1.15]{HA}). We have the following proposition.
	
	\begin{proposition}\label{closedunderlim}\cite[Remark 1.5.3]{liu2017enhanced}
		For any $\X \in \op{Nis-locSt}$, the $\infty$-category $\Shext(\X)$ is closed.
	\end{proposition}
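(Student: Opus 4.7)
The plan is to deduce closedness from the fact, already established in \cref{Shstckdef}, that $\Shext(\X)$ lies in $\op{CAlg}(\op{Pr}^L_{\op{stb}})$. An object of $\op{CAlg}(\op{Pr}^L)$ is by definition a presentable symmetric monoidal $\infty$-category whose tensor product preserves small colimits in each variable separately. Once this bi-cocontinuity property is in hand, the existence of an internal Hom comes for free from the presentable adjoint functor theorem, and its functoriality will follow from the fact that $\op{Pr}^L$ is itself closed symmetric monoidal under the Lurie tensor product.

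More concretely, I would fix an atlas $x: X \to \X$ admitting Nisnevich-local sections and use the limit description from \cref{Shstckdef},
\[
\Shext(\X) \;\cong\; \op{lim}_{n \in \Delta}\, \Shext(X^n_{\X}),
\]
with the limit taken in $\op{CAlg}(\op{Pr}^L_{\op{stb}})$. Since $\op{CAlg}(\op{Pr}^L_{\op{stb}})$ admits small limits and the forgetful functor to $\op{Pr}^L_{\op{stb}}$ preserves them, the underlying $\infty$-category $\SHe(\X)$ is the limit in $\op{Pr}^L$ of the closed presentable symmetric monoidal $\infty$-categories $\SHe(X^n_{\X})$ along symmetric monoidal colimit-preserving transition maps. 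It follows that the tensor product on $\SHe(\X)$ preserves colimits in each variable separately. For any $E \in \SHe(\X)$, the endofunctor $E \otimes -$ is then a colimit-preserving functor between presentable $\infty$-categories, and by the adjoint functor theorem it admits a right adjoint $\uhom(E,-)$.

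The only remaining point is that these pointwise right adjoints assemble into a functor $\SHe(\X)^{op} \times \SHe(\X) \to \SHe(\X)$, equivalently that $\otimes$ factors through $\op{Fun}^L(\SHe(\X), \SHe(\X))$. This is the general fact that every commutative algebra object of $\op{Pr}^L$ is automatically closed symmetric monoidal, recorded in \cite[Remark 1.5.3]{liu2017enhanced}. The only piece of real content in the argument is verifying that the limit in $\op{CAlg}(\op{Pr}^L_{\op{stb}})$ defining $\Shext(\X)$ genuinely inherits the bi-cocontinuity of the tensor product from its terms, but this is built into the definition of $\op{CAlg}(\op{Pr}^L)$ once one knows the limit is computed in the underlying $\infty$-category. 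I do not foresee any serious obstacle beyond unpacking these definitions.
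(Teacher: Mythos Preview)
Your proposal is correct and follows essentially the same route as the paper: both arguments take an atlas, use the limit description of $\Shext(\X)$ in $\op{CAlg}(\op{Pr}^L_{\op{stb}})$, and invoke \cite[Remark 1.5.3]{liu2017enhanced} to conclude closedness. The paper's proof is terser---it simply notes that each $\Sho(X^n_{\X})$ is closed and cites Liu--Zheng for the fact that the limit inherits this property---whereas you unpack the mechanism (bi-cocontinuity of $\otimes$ plus the adjoint functor theorem) more explicitly, but the content is the same.
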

	
	\begin{proof}
		Let $x:X \to \X$ be an atlas admitting Nisnevich-local sections. Then we have a functor \[p^{\otimes}: N(\Delta) \to \op{CAlg}(\op{Pr}^L_{stb}) \] induced by the \v{C}ech nerve of $x$. As $\Sho(X^n_{\X})$ is closed for every $n$. Then by \cite[Remark 1.5.3]{liu2017enhanced}, we get that the limit of $p^{\otimes}$ i.e. $\Shext(\X)$ is closed.
	\end{proof}
	\begin{notation}
		For any objects $\E,\E' \in \SHe(\X)$, we shall denote $\op{Hom}_{\SHe(\X)}(\E,\E')$ to be the internal Hom. 
	\end{notation}
	
	Thus we have defined four functors $f^*,f_*, - \otimes-$ and $\op{Hom}(-,-)$ along with the functor $\Shext(-)$. 
		\section{Enhanced operations for stable homotopy theory of algebraic stacks}
	In the previous section, we have extended the stable homotopy functor $\SH$ from schemes to algebraic stacks. We have also defined the four functors $f^*,f_*, - \otimes-$ and $ \op{Hom}(-,-)$. The goal of this chapter is to construct the functors $f_!$, $f^!$ and prove the base change and projection formula (\cref{thmmain2}).\\
	
	The key idea is to construct these functors and proving the above mentioned properties via the enhanced operation map due to Liu and Zheng (\cite{liu2017enhanced}). The enhanced operation map is a functor which encodes all of this information.
	As $\SH$ for schemes satisfy relations among six operations, the enhanced operation map can be constructed on the level of schemes (see \cite[Section 9.4]{Robalothesis}). We shall extend the enhanced operation map from schemes to algebraic stacks which shall prove \cref{thmmain2}. \\

	\subsection{Statement of the theorem and motivation for enhanced operation map.}
	The extraordinary pushforward $f_!$ and extraordinary pull-back functors $f^!$ are defined for morphisms of schemes that are separated and of finite type (\cite[Theorem 9.4.8]{Robalothesis}).  We will denote by $\Schf' \subset \Schf$ the category of schemes in which morphisms are separated and of finite type. With this notation, the functors $f_!$ and $f^!$ can be assembled into functors \[\SH_!: N(\Schf') \to \op{Pr}^L_{stb} ~~,~~ \SH^!: N(\Schf')^{op} \to \op{Pr}^R_{stb}.\] 
	We shall denote by $\op{Nis-locSt}' \subset\op{Nis-locSt}$ the subcategory in which morphisms are representable and separated of finite type. Note that for a representable morphism, separated is equivalent to the fact that the diagonal is a closed immersion (\cite[\href{https://stacks.math.columbia.edu/tag/04YS}{Tag 04YS}]{stacks-project}).
	
	\begin{theorem}\label{thmmain2}
		
		The functors $\SH_!$ and $\SH^!$  extend to functors
		\[ \Sheshr: N^D_{\bb}(\op{Nis-locSt}') \to \op{Pr}^L_{stb}   \]
		and \[ \Sheshrup: N^D_{\bb}(\op{Nis-locSt}')^{op} \to \op{Pr}^R_{stb}.  \]  These functors satisfy:
		\begin{enumerate}
			
			\item(\textbf{Base change}) Let 
			\begin{equation}
				\begin{tikzcd}
					\X' \arrow[r,"g'"] \arrow[d,"f'"] & \X \arrow[d,"f"] \\
					\Y' \arrow[r,"g"] & \Y
				\end{tikzcd}
			\end{equation}
			be a pullback diagram in $\op{Nis-locSt}$ where $f$ and $f'$ are separated of finite type. Then the diagram
			\begin{equation}
				\begin{tikzcd}
					\SHe(\X) \arrow[r,"g^{'*}"]  \arrow[d,"f_!"] & \SHe(\X') \arrow[d,"f'_!"] \\
					\SHe(\Y) \arrow[r,"g*"] & \SHe(\Y')
				\end{tikzcd}
			\end{equation}
			commutes in $\op{CAlg}(\op{Pr}^L_{stb})$. In other words we have an equivalence of functors  \[ \op{Ex}(\Delta^*_{\#}):g^*\circ f_! \cong f'_! \circ g'^*  \] in the functor category $\op{Fun}(\SHe(\X),\SHe(\Y'))$.
			\item (\textbf{Projection formula}) Let $f: \X \to \Y$ be a morphism in $\op{Nis-locSt}'$. Given $E \in \SH(\X)$ and $E' \in \SH(\Y)$, there exists an equivalence
			\begin{equation}
				f_!(E \otimes f^*(E')) \cong f_!(E) \otimes E'.
			\end{equation}
		\end{enumerate}
		
	\end{theorem}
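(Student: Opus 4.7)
The plan is to package $f_!$, $f^!$, base change and the projection formula into a single enhanced operation map and then extend that, rather than building each piece by hand. Starting from the scheme-level enhanced operation map of Liu--Zheng and Robalo,
\begin{equation*}
\mathrm{EO}_{\mathrm{sch}}^{\otimes} \colon \mathrm{Corr}(\Schf')^{\otimes}_{\mathrm{all},F} \to \op{CAlg}(\op{Pr}^L_{stb}),
\end{equation*}
whose vertical arrows encode $f^*$ and whose horizontal arrows encode $f_!$ (with $F$ the class of separated morphisms of finite type), every statement of the theorem is already built into the functoriality and the monoidal structure; obtaining it for stacks therefore reduces to extending $\mathrm{EO}_{\mathrm{sch}}^{\otimes}$ along the inclusion $\Schf' \hookrightarrow \op{Nis-locSt}'$.

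The extension would mimic the construction sketched in \cref{Fextidea}, but enriched to the correspondence setting. First I would introduce a $\mathrm{Cov}$-category of atlases inside $\mathrm{Corr}(\op{Nis-locSt}')$ whose objects are \v{C}ech nerves of atlases admitting Nisnevich-local sections, and reprove the surjectivity statement \cref{covnonempty} and the localization statement \cref{covislocalization} in this enriched setting; stability of representable and of separated-finite-type morphisms under pullback in $\op{Nis-locSt}'$ makes these arguments go through verbatim. Then I would descend $\mathrm{EO}_{\mathrm{sch}}^{\otimes}$ through this localization. The decisive descent input is: for any atlas $x \colon X \to \X$ admitting Nisnevich-local sections, applying $\mathrm{EO}_{\mathrm{sch}}^{\otimes}$ to the \v{C}ech nerve of $x$ yields a limit diagram in $\op{CAlg}(\op{Pr}^L_{stb})$. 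In the vertical direction this is $\Sho$-descent as used in \cref{covlocsecdesc} and \cref{Shstckdef}; in the horizontal direction it must be translated into a vertical-descent statement by means of the scheme-level base change isomorphism for $f_!$.

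Once the extended map $\mathrm{EO}_{\mathrm{ext}}^{\otimes}$ is in place, $\Sheshr$ and $\Sheshrup$ arise by restricting to horizontal and opposite-horizontal morphisms, with $f^!$ obtained as the right adjoint of $f_!$ via the adjoint functor theorem applied to the presentable stable $\infty$-categories constructed in Section~2. Base change~(1) is the image under $\mathrm{EO}_{\mathrm{ext}}^{\otimes}$ of a cartesian square in $\mathrm{Corr}(\op{Nis-locSt}')$, and the projection formula~(2) follows from the symmetric monoidal structure of $\mathrm{EO}_{\mathrm{ext}}^{\otimes}$ together with \cref{closedunderlim}. All compatibilities at higher simplicial level are automatic from the enhanced-operation formalism.

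The main obstacle will be the horizontal descent check. Since $f_!$ is covariant in the underlying category, Nisnevich descent along an atlas is not a literal sheaf condition; it must be reinterpreted, using the scheme-level base change isomorphism for $f_!$, as a limit statement over the \v{C}ech nerve that is compatible with the vertical descent for $f^*$. Carrying this out coherently at every simplicial level, so that the correspondence-category descent can be assembled from the two one-sided descent statements by an invocation of \cref{equivprodlemma}, is the technical heart of the argument; the remainder is bookkeeping within the Liu--Zheng correspondence-category formalism.
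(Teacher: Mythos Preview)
Your strategy is essentially the paper's: encode $f_!$, base change and projection formula into a single enhanced operation map at the scheme level, then extend it to $\op{Nis-locSt}$ by the same localization-through-$\op{Cov}$ argument used in \cref{thmmain}. The paper carries this out in \cref{eomapext} and the surrounding claims, which are direct analogues of \cref{covnonempty} and \cref{covislocalization} for the enriched source category.

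The main difference is the packaging of the enhanced operation map. You propose a symmetric monoidal correspondence category $\mathrm{Corr}(-)^{\otimes}$ with target $\op{CAlg}(\op{Pr}^L_{stb})$; the paper instead uses Liu--Zheng's multi-simplicial formalism, with source $\dd^*_{2,\{2\}}\op{Fun}(\Delta^1,-)^{\op{cart}}_{F,\op{ALL}}$ and target $\op{Mod}(\op{Pr}^L_{stb})$. This choice of target is what sidesteps the obstacle you flag. In the paper's setup, a $0$-simplex is a morphism $f\colon Y\to X$, sent to the module object $(\Sho(X),\SH(Y))$, and the extension is obtained by applying $\op{EO}(\Sho)$ to the \v{C}ech nerve and then taking the limit \emph{in $\op{Mod}(\op{Pr}^L_{stb})$}, which admits small limits. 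There is no separate ``horizontal descent for $f_!$'' to verify: the lower shriek appears only as a morphism in the module category, and limits of diagrams of module objects and module morphisms are computed uniformly. The projection formula is then read off from the module-morphism condition, and base change from the image of specific $2$-simplices, exactly as at the scheme level. Your correspondence-category formulation is equivalent in principle, but forces you to confront the covariant descent for $f_!$ explicitly and reconcile it with contravariant descent for $f^*$; the paper's choice of $\op{Mod}(\op{Pr}^L_{stb})$ as target absorbs this into a single limit.
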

	\begin{remark}
		To prove the theorem, it suffices to construct the functor $\Sheshr$. Given $\Sheshr$, the functor $\Sheshrup$ can be defined by $\Sheshrup = (\Sheshr)^{op}: \Nst^{op} \to (\op{Pr}^L_{stb})^{op} \cong \op{Pr}^R_{stb}$. The equivalence $ (\op{Pr}^L_{stb})^{op} \cong \op{Pr}^R_{stb}$ follows from \cite[Corollary 5.5.3.4]{HTT}. The equivalence relies on the fact that colimit-preserving between presentable $\infty$-categories admit right adjoints. Thus for $f: \X \to \Y$ a representable morphism of stacks which is separated and of finite type, $f^!:= \Sheshrup(f): \SHe(\Y) \to \SHe(\X)$ is right adjoint to $f_!:=\Sheshr(f): \SHe(\X) \to \SHe(\Y)$.\\
	\end{remark}
	We shall prove this theorem in \cref{proofofthmmain2}. The proof of the theorem relies on extending a special kind of map from schemes to algebraic stacks. We call this map the \emph{"enhanced operation map"} due to Liu and Zheng (\cite{liu2017enhanced}). The existence of the enhanced operation map shall give us the lower shriek functors, base change formula and projection formula. We now try to motivate the source and target of this map. \\
	
	Let us first motivate the target. This comes from the projection formula. The projection formula can viewed as a property in the $\infty$-category of module objects $\op{Mod}(\op{Pr}^L_{stb})$. Let us briefly recall the $\infty$-categorical analog of classical module categories i.e. $\op{Mod}(\op{Pr}^L_{stb})$ (see \cite[Section 9.4.1.2]{Robalothesis} for precise definition).  \\
	
	The objects of $\op{Mod}(\op{Pr}^L_{stb})$ are pairs $(\Ca^{\otimes},\M)$ where $\Ca^{\otimes}$ is a symmetric monoidal $\infty$-category (i.e. an object in $\op{CAlg}(\op{Pr}^L_{stb})$) and $\M$ is a presentable stable $\infty$-category with a morphism $\Ca \times \M \to \M$ which incorporates the module structure of the object $\M$ (here $\Ca$ is the underlying $\infty$-category of $\Ca^{\otimes}$). A morphism $ (\Ca^{\otimes},\M ) \to (\Ca^{'\otimes},\M')$  in $\op{Mod}(\op{Pr}^L_{stb})$ consists of morphisms of commutative algebra objects $ u: \Ca^{\otimes} \to \Ca^{'\otimes}$ and a morphism $v: \M \to \M'$ in $\op{Pr}^L_{stb}$ which is $\Ca$-linear where $\M'$ is endowed with a $\Ca^{\otimes}$-module structure via $u$. In particular for objects $c \in \Ca$ and $m \in M$, one has an equivalence 
	\begin{equation}\label{modhomomorphism}
		v(c \otimes m) \cong u(c) \otimes v(m).
	\end{equation}

	In the context of stable homotopy theory of schemes, a morphism $f: X \to Y$ induces a monoidal pullback functor $f^{*\otimes}: \Sho(Y) \to \Sho(X)$. Then the pair $(\Sho(Y),\SH(X))$ is an example of a module object. We can visualize this as an $\infty$-categorical generalization of the statement that a morphism $ g: A \to B$ of rings makes $B$ an $A$-module. Also for any object $Z \in \Schf$, the pair $(\Sho(Z),\SH(Z))$ is an object in $\op{Mod}(\op{Pr}^L_{stb})$ (the module structure is induced by the tensor product). \\
	
	The projection formula is equivalent to the statement that the pair of morphisms
	\begin{equation}
		(\op{id},f_!):(\Sho(Y),\SH(X)) \to (\Sho(Y),\SH(Y))
	\end{equation}
	is a morphism in $\op{Mod}(\op{Pr}^L_{stb})$. This follows from the condition of module morphism (\cref{modhomomorphism} applied to $v= f_!$ and $ c = \op{id}$) in this context. The above discussion motivates that $\op{Mod}(\op{Pr}^L_{stb})$ will be the target of the enhanced operation map.\\
	
	Let us motivate the source of the enhanced operation map. As stated before, the map encodes both the lower shriek functors and the base change formula. To combine these, one defines a  simplicial set where the $1$-simplices are cartesian squares
	\begin{equation}
		\begin{tikzcd}
			\X' \arrow[r,"g'"] \arrow[d,"f'"] & \X \arrow[d,"f"] \\
			\Y' \arrow[r,"g"] & \Y
		\end{tikzcd}
	\end{equation}
	where $f$ (and thus $f'$) is representable, separated and finite type.\\
	
	Thus the enhanced operation map is a functor from a simplicial set whose simplices consists of pullback squares as above and takes values in $\op{Mod}(\op{Pr}^L_{stb})$.\\
	
	The source of the enhanced operation map naturally defines a bisimplicial set in which vertical arrows are separated and finite type, which is called a bi-marked simplicial set (\cite[Definition 3.9]{Gluerestnerv}). As the functor $f_!$ is usually constructed by combining  constructions for open embeddings and proper morphisms, it will be useful to have a more general notion of a multi-simplicial set in which the class of arrows in some directions are restricted. Let us therefore recall these notions from the article of Liu and Zheng (\cite[Section 3]{Gluerestnerv}).

	\subsection{Multisimplicial, multi-marked and multi-tiled simplicial sets.}

	Let $I$ be a finite set and consider it as a discrete category.
	\begin{definition}\cite[Definition 3.1]{Gluerestnerv}
		An \textit{I-simplicial set} is a functor:
		\[ \op{Fun}(I,\Delta)^{op}=  (\underbrace{\Delta \times \Delta  \cdots \Delta}_{\text{I-times}})^{op} \to \Set  \]
		We denote the category of $I$-simplicial sets by $\Set_{I\Delta}$. If $I=\{1,2,\cdots,k\}$, then we denote it by $\op{Set}_{k\Delta}$.
	\end{definition}
	\begin{remark}
		By definition, $\Set_{1\Delta}=\Set_{\Delta}$ and similarly $\Set_{2\Delta}$ is the category of bisimplicial sets.
	\end{remark}
	\begin{notation}
		We shall denote any object $(n_i)_{i \in I} $ of $\op{Fun}(I,\Delta)$ by $\niI$
		We denote $\Delta^{\niI}$ to be the $I$-simplicial set represented by $\prod_i\Delta^{n_i}$. For an $I$-simplicial set, we denote $S_{\niI}$ by $S(\niI)$. 
	\end{notation}
	
	We discuss adjunctions between $\Set_{k\Delta}$ and $\Set_{\Delta}$. 
	
	\begin{notation}\cite[Definition 3.3]{Gluerestnerv}
		\begin{enumerate}
			\item Denote $f: I=\{1,2,\cdots,k\}\to \{1\}$ be the projection map. This induces the functor $\Delta \to \op{Fun}(I,\Delta)$ which induces the \textit{diagonal functor}:
			\[ \delta^*_k:\Set_{k\Delta} \to \Set_{\Delta}  \] 
			which takes an $k$-simplicial set $S$ to $\delta^*_k(S)$ which evaluated on $[n]$ is $ S([n],[n],\cdots [n])$. \\
			This functor has a right adjoint:
			\[\delta^k_*:\Set_{\Delta} \to \Set_{k\Delta}  \]
			which evaluated on $S$, defines a $k$-simplicial set defined as 
			\[ \delta^k_*(S)_{\niI} = \op{Hom}_{\Set_{\Delta}}(\prod_{i \in I} \Delta^{n_i},S) \]
			\item  Similarly an injection of sets $f: J \hookrightarrow I$ induces a functor $(\Delta_f)^*: \Set_{J\Delta} \to \Set_{I\Delta}$ induced from $f$. It has a right adjoint, which we denote by 
			\[ \ep^I_J: \Set_{I\Delta} \to \Set_{J\Delta} \]
			defined by 
			\[ \ep^I_J(S)(\niI) = S((\niI,0)) \]
			where  we write $(\niI,0)$ for the vector with entries $0$ for $i \neq j$. We call the map $\ep^I_J$ as \textit{restriction functor}. \\
			If $I=\{1,2,\cdots,k\}$ and $J =\{j\}$, then we denote it by $\ep^k_j$. 
			\item Given $I=\{1,2,\cdots,k\}$ and $J \subset I$. We have the \textit{partial opposite} functor \[\op{op}^I_J: \Set_{k\Delta} \to \Set_{k\Delta} \] defined by taking opposite edges along the directions $j \in J$.  Using this notion, we define the \textit{twisted diagonal } functor as  \[ \dd^*_{k,J}:= \dd^*_I \circ \op{op}^I_J: \Set_{k\Delta} \to \Set_{\Delta}. \]
		\end{enumerate}
	\end{notation}
	\begin{example}
		\begin{enumerate}
			\item 	The map $\dd^2_*$ takes a simplicial set $S$ to the bisimplicial set $\dd^2_*S$ whose $(n_1,n_2)$ simplices are $\op{Hom}_{\sset}(\Delta^{n_1} \times \Delta^{n_2},S)$. If $S = N(C)$ where $C$ is an ordinary category, then these are just $n_1 \times n_2$ grids in $\Ca$. \\
			The map $\dd^*_2$ takes a bisimplicial set to its diagonal simplicial set. For $S =\dd^2_*N(C)$, the $n$-simplices of the simplicial set  $\dd^*_2(\dd^2_*N(C))$ are morphisms $\Delta^n \times \Delta^n \to N(C)$ (in other words these are $n \times n$ grids in $C$).\\
			\item 	The maps $\ep^2_1$ and $\ep^2_2$ send a bisimplicial set $S':(\Delta \times \Delta)^{op} \to \Set $ to the simplicial sets $S'|_{\Delta \times [0]}$ and $S'|_{[0] \times \Delta}$ respectively, i.e.. these are the restrictions to the first row and column of the bisimplicial set.
			
			\item For $k=1$, the twisted diagonal functor sends a simplicial set $S$ to $S^{op}$.For $k=2$, the partial opposite functor $\op{op}^2_{\{1\}}$ takes a bisimplicial set $S$ and sends to the bisimplicial set $S'$ which when restricted to direction $1$ gives the simplicial set $(\ep^2_1 S)^{op}$ and when restricted to direction $2$ gives the simplicial set $\ep^2_2S$.  In order to understand it more clearly, let us consider the bisimplicial set $\dd^2_*N(C)$. Then the $n$ simplices of the simplicial se $\dd^*_{2,\{1\}}(\dd^2_{*}N(C))$ are given by $n \times n$ grids $(\Delta^n)^{op} \times \Delta^n \to N(C)$.
			
		\end{enumerate}
	\end{example}

	\begin{definition}\cite[Definition 3.9]{Gluerestnerv}
		An \textit{$I$-marked simplicial set} is the data $(S,\E:=\{\E_i\}_{i \in I})$ where $S$ is a simplicial set and $\E$ is a set of edges $\E_i$ containing every degenerate edge of $S$. A morphism between $I$-marked simplicial sets $(S,\E)$ and $ (S',\E')$ is a morphism of simplicial sets $ f: X \to X'$ with the property $f(\E_i) \subset \E'_i$. We denote the category of $I$-marked simplicial sets as $\Set^{I+}_{\Delta}$. If $I = \{1,2,\cdots, k\}$, we denote the category of $I$-marked simplicial sets by $\Set^{k+}_{\Delta}$. 
	\end{definition}
	\begin{remark}
		An $I$-marked simplicial set is said to be an $I$-marked $\infty$-category if the underlying simplicial set is an $\infty$-category. \\
		For $k=1$, we get the notion of marked simplicial sets defined in \cite[Section 3.1]{HA}. 
	\end{remark}
	
	\begin{notation}
		
		\begin{enumerate}
			\item Given any $I$-simplicial set $S$, we can define an $I$-marked simplicial set $\dd^*_{I+}(S') = (\dd^*_I S', \E = \{ (\epsilon^I_iS)_1\}_{i \in I})$. When $k=2$, the marked simplicial set $\dd^*_{+2}(S')$ consists of the diagonal simplicial set of $S'$ with the marked edges being the edges of the simplicial set of the first row and first column of the bisimplicial set.
			\item Given any $I$-marked simplicial set $(S,\E)$, we can define an $I$-simplicial set $\dd^{I+}_*(S,\E)$ as the sub $I$-simplicial set of $\dd^I_*S$ which consists only of edges $\E_i$ in simplicial set $\ep^I_i(S)$. 
		\end{enumerate}
		
	\end{notation}
	This notion yields us to define the notion of restricted simplicial nerve. 
	\begin{definition}\cite[Definition 3.10]{Gluerestnerv}
		Let $(S,\E)$ be an $I$-marked simplicial set, then we define the \textit{restricted $I$-simplicial nerve} as 
		\[ S_{\E}:= \dd^{I+}_* (S,\E) \]
	\end{definition}
	\begin{example}
		Let $(S,\E) = (N(\Sc), \{P,O\})$ where $P$ and $O$ are the set of proper morphisms and open immersions respectively. Then $S_{\E}$ is the bisimplicial subset of the bisimplicial set $\dd^2_*N(\Sc)$ which consists of only proper morphisms as edges in the simplicial set $\ep^2_1(\dd^2_*N(\Sc))$ and open immersions as edges in the simplicial set $\ep^2_2(\dd^2_*N(\Sc))$.
	\end{example}
	\begin{definition}\cite[Definition 3.12]{Gluerestnerv}
		An \textit{I-tiled simplicial set} is the data $(S,\E=\{\E_i\}_{i\in I},\Q=\{\Q_{ij}\}_{i,j \in I, i \neq j})$ where $(X,\E)$ is a marked simplicial set and $\Q$ is a collection of set of squares $\Q_{ij}$ (i.e. $\Delta^1 \times \Delta^1 \to S$) such that 
		\begin{enumerate}
			\item the set of squares $\Q_{ij}$ and $\Q_{ji}$ are obtained from each other by transposition.
			\item The vertical arrows of each square in $\Q_{ij}$ are in $\E_i$ and the horizontal arrows are in $\E_j$.
			\item To every edge in $\E_i$, there is a square in $\Q_{ij}$ induced by the map $\op{id} \times s^0_0$. 
		\end{enumerate} A morphism of $I$-tiled simplicial sets $ f: (S,\E,\Q) \to (S',\E',\Q')$ which maps $f(\E_i) \subset \E'_i$ and $f(\Q_{ij}) \subset \Q'_{ij}$. We denote the category of $I$-tiled simplicial sets by $\Set^{I\bx}_{\Delta}$.
	\end{definition}
	
	\begin{notation}\cite[Remark 3.13]{Gluerestnerv}
		
		\begin{enumerate}
			\item Given any $I$-simplicial set $S$, we define an $I$-tiled simplicial set $\dd^*_{I\bx} (S):= (\dd^*_I S, \E,\Q)$ where $\E=\{\E_i= (\epsilon^I_i S')_1\}_{i\in I}$ and $\Q = \{\Q_{ij}= \op{Hom}(\Delta^1 \times \Delta^1,\dd^*_2\epsilon^I_{i,j}(S))\}_{i,j \in I, i \neq j}$. 
			\item Given any $I$-tiled simplicial set $(S',\E',\Q')$, we define an $I$-simplicial set $\dd^{I\bx}_*((S',\E',\Q'))$ as the  $I$-simplicial subset of $\dd^{I+}_*(S',\E')$ such that for $j,k \in I$ and $j \neq k$, every square in the simplicial set $\dd^*_2\ep^I_{jk}(\dd^{I+}_*(S',\E'))$ associated to any $(1,1)$-simplex lies in $\Q_{jk}$.
		\end{enumerate}
	\end{notation}
	\begin{remark}
	  Let $S$ be a bisimplicial set. Given any $(1,1)$-simplex of $S$, we can define a square in the diagonal simplicial set $\dd^*_2S$ as follows. A $(1,1)$-simplex corresponds to a morphism $\tau: \Delta^{(1,1)} \to S$. Applying the functor $\dd^*_2 (-)$, we get a morphism \[ \dd^*_2(\tau): \Delta^1 \times \Delta^1 \to \dd^*_2S. \] \\
		If $S = N(\Sc')_{P,O}$, then a square in $S$ corresponds to a morphism $ \Delta^1 \times \Delta^1 \to N(\Sc')$ where horizontal arrows are proper and vertical arrows are open. 
	\end{remark}
	\begin{definition}\cite[Definition 3.16]{Gluerestnerv}
		Let $\Ca$ be and $\infty$-category and $\E_1,\E_2$ be set of edges, denote $\E_1 \star^{\op{cart}} \E_2$ be the set of Cartesian squares. For an $I$-marked $\infty$-category $(\Ca,\E:=\{\E_i\}_{i \in I})$, we denote $\E^c_{ij}:= \E_i \star^{\op{cart}} \E_j$. Denote $ (\Ca,\E,\E^c)$ to be the $I$-tiled $\infty$-category. We define the \textit{Cartesian $I$-simplicial nerve} to be the $I$-simplicial set 
		\[ \Ca^{\op{cart}}_{\E}:= \dd^{I\bx}_*((\Ca,\E,\E^c)) \] 
	\end{definition}
	
	\begin{example}
		The bisimplicial set $N(\Sc)^{\op{cart}}_{P,O}$ is the sub-bisimplicial set of $\dd^2_*N(\Sc)$ which consists of proper morphisms as edges in one direction, open immersions as edges in other and every square formed by open and proper morphisms is a pullback square. \\
		
		Let us understand the simplicial set $\dd^*_{k}N(C)^{\op{cart}}_{\E}$ which will be the source of the enhanced operation map. A $n$-simplex of  $\dd^*_kN(C)^{\op{cart}}_{\E}$ is a morphism $\sigma_n:\underbrace{\Delta^n \times \Delta^n \cdots \times \Delta^n}_{ \op{k-times}} \to N(C)$  such that every edge  $\sigma_n|_i: \Delta^1 \to N(C)$ in direction $i$ lies in $\E_i$ for every $ i \in I$ and for every $ j \neq j' \in I$, the square $\sigma_n|_{j,j'}: \Delta^1 \times \Delta^1 \to N(C)$ is a pullback square formed by edges $\E_j$ and $\E_{j'}$. In case $ k=2$ and $(C,\E) = (\Sc,\{P,O\})$, the $n$-simplices of $\dd^*_2N(\op{Sch}')^{cart}_{P,O}$ are $n \times n$ grids of the form 
		
		\begin{equation}
			\begin{tikzcd}
				X_{00} \arrow[r] \arrow[d] & X_{01} \arrow[r] \arrow[d] & \cdots & X_{0n} \arrow[d] \\
				X_{10} \arrow[r]  & X_{11} \arrow[r] & \cdots & X_{1n} \\
				\vdots & \vdots & \vdots & \vdots \\
				X_{n1} \arrow[r] & X_{n2} & \cdots & X_{nn}
			\end{tikzcd}
		\end{equation}
		where vertical arrows are open, horizontal arrows are proper and each square is a pullback square. 
	\end{example}
	\begin{notation}\label{directionmapnotation}
		Let $(N(C),\{\E_1,\E_2\})$ be a marked $\infty$-category. For any $i =1$ or $2$, let $N(C_{\E_i})$ be the subcategory of $N(C)$ spanned by edges in $\E_i$. Then we  have a natural map \[ \op{dir}_{\E_i}: N(C_{\E_i}) \to \dd^*_2 N(C)^{\op{cart}}_{\E}  \] which on the level of $1$-simplices sends a morphism $ f: X \to Y$ to a square of the form 
		\begin{equation}
			\begin{tikzcd}
				X \arrow[r,"f"] \arrow[d,"\op{id}"] & Y \arrow[d,"\op{id}"] \\
				X \arrow[r,"f"] & Y.
			\end{tikzcd}
		\end{equation}
		We call the map $\op{dir}_{\E_i}$ as the \textit{restriction map along direction $i$}.
	\end{notation}
	
	\subsection{The enhanced operation map for $\SH(X)$.}
	In this subsection, we introduce the enhanced operation map on schemes. Let us first introduce the formal setup of the enhanced operation map as mentioned in \cite[Section 9.4]{Robalothesis}. 
\begin{notation}\label{mainnot}
		Let $\Schf$ be the category of Noetherian schemes of finite Krull dimension
		Let  \[ \D^{\otimes}: N(\Schf)^{op} \to \op{CAlg}(\op{Pr}^L_{stb}) \] be a functor. The underlying $\infty$-category of the symmetric monoidal $\infty$-category $\D^{\otimes}(X)$ is denoted by $\D(X)$. For a morphism of schemes $f:X \to Y$, we shall denote the pullback functor $\D(f): \D(Y) \to \D(X)$ by $f^*$. It is a colimit preserving functor. Thus by adjoint functor theorem, there exists a right adjoint $f_*$. We assume the functor $\D$ has the following properties:
		\begin{enumerate}
			\item  For any smooth morphism of finite type $f$, $f^*$ has a left adjoint $f_{\#}$ such that: 
			\begin{enumerate}
				\item  (Smooth projection formula) For  any $E \in \D(Y)$ and $B \in \D(X)$, the natural map formed by adjunction
				\begin{equation}\label{smothpro}
					f_{\#}(E \otimes f^*(B)) \to f_{\#}E \otimes B \end{equation} is an equivalence.
				\item  (Smooth base change) For a cartesian square of schemes 
				\begin{equation}\label{8.1}
					\begin{tikzcd}
						X' \arrow[r,"f'"] \arrow[d,"g'"] & Y' \arrow[d,"g"] \\
						X \arrow[r,"f"] & Y
					\end{tikzcd}
				\end{equation}
				with $f$ smooth of finite type, the commutative square  
				
				\begin{equation}\label{8.2}
					\begin{tikzcd}
						\D(X') & \D(Y') \arrow[l,"\lbrace f '\rbrace^*"]  \\
						\D(X ) \arrow[u,"\lbrace g'\rbrace^*"] & \D(Y) \arrow[l,"f^*"] \arrow[u,"g^*"]
					\end{tikzcd}
				\end{equation}
				is horizontally left-adjointable (\cite[Definition 4.7.4.13]{HA}), i.e.. there exists a commutative square
				\begin{equation}\label{8.3}
					\begin{tikzcd}
						\D(X') \arrow[r,"\lbrace f'\rbrace_{\#}"]& \D(Y') \\
						\D(X ) \arrow[u,"\lbrace g'\rbrace^*"] \arrow[r,"f_{\#}"] & \D(Y) \arrow[u,"g^*"]
					\end{tikzcd}
				\end{equation}
			\end{enumerate}
			\item For $f: Y \to X$ a proper morphism of schemes, $f^*$ admits a right adjoint functor $f_*$ with the following properties:
			\begin{enumerate}
				\item (Proper projection formula) For $E \in \D(Y)$ and $B \in \D(X)$, the natural map 
				\begin{equation}\label{proppro}
					f_*(E) \otimes B \to f_*(E \otimes f^*(B)) 
				\end{equation}
				is an equivalence.
				\item (Proper base change) For the cartesian square in \cref{8.1}, the induced pullback square \cref{8.2} is horizontally right adjointable. In other words, the square commutes
				\begin{equation}\label{8.4}
					\begin{tikzcd}
						\D(X') \arrow[r," f'_*"]& \D(Y') \\
						\D(X ) \arrow[u,"\lbrace g'\rbrace^*"] \arrow[r,"f_*"] & \D(Y) \arrow[u,"g^*"]
					\end{tikzcd}
				\end{equation}
			\end{enumerate} 
			\item (Support property) For a cartesian diagram of schemes in \cref{8.1} where $f$ is an open immersion and $g$ is an proper,the commutative diagram in \cref{8.3} written as  square
			\begin{equation}\label{8.5}
				\begin{tikzcd}
					\D(X) \arrow[r,"f_{\#}"] \arrow[d,"\lbrace g' \rbrace ^*"] & \D(Y) \arrow[d,"g^*"] \\
					\D(X') \arrow[r,"\lbrace f' \rbrace_{\#}"] & \D(Y')
				\end{tikzcd}
			\end{equation} 
			is horizontally right adjointable, i.e. the square 
			
			\begin{equation}\label{8.6}
				\begin{tikzcd}
					\D(X') \arrow[r,"\lbrace f' \rbrace_{\#}"] \arrow[d,"g'_*"] & \D(Y') \arrow[d,"g_*"] \\
					\D(X) \arrow[r,"f_{\#}"] & \D(Y) 
				\end{tikzcd}
			\end{equation}
			commutes.
			
		\end{enumerate}
	\end{notation}
	\begin{example}
		The functor $\Sho$ satisfies the conditions of \cref{mainnot} (\cite[Example 9.4.6]{Robalothesis}). 
	\end{example}
	\begin{notation}
		Let 
		\begin{equation}
			\begin{tikzcd}
				Y_0 \arrow[r,"u"] \arrow[d,"f_0"] & Y_1 \arrow[d,"f_1"]\\
				X_0 \arrow[r,"v"] & X_1
			\end{tikzcd}
		\end{equation}
		be an edge in $\op{Fun}(\Delta^1,\Schf)$.
		We want to denote some specific collection of edges in \linebreak $\op{Fun}(\Delta^1,N(\Schf))$ as follows:
		\begin{enumerate}
			\item $F:=$ all such squares such that $u$ and $v$ are separated morphisms of finite type.
			\item $\op{ALL}:=$ all edges in $\op{Fun}(\Delta^1,N(\Schf))$.
		\end{enumerate}
	\end{notation}
	
	\begin{theorem}\label{eomaptheorem}\cite[Section 3.2]{liu2017enhanced}
		Given a functor $\D^{\otimes}: N(\Schf)^{op} \to \op{CAlg}(\op{Pr}^L_{stb})$ which satisfies properties in \cref{mainnot}, then there exists an enhanced operation map 
		\begin{equation}\label{eomap}
			\op{EO}(\D^{\otimes}):\dd^*_{2,\{2\}}\op{Fun}(\Delta^1,N(\Schf))^{\op{cart}}_{F,\op{ALL}}  \to \op{Mod}(\op{Pr}^L_{stb}).
		\end{equation}
		which when restricted to direction $F$, gives us the lower shriek functor $\SH_!$. Morever, we get the projection and base change formulas by evaluating at specific simplices of \linebreak $\dd^*_{2,\{2\}}\op{Fun}(\Delta^1,N(\Schf))^{\op{cart}}_{F,\op{ALL}}$.
	\end{theorem}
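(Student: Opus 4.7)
The plan is to follow the strategy of Liu--Zheng's DESCENT program, which produces the enhanced operation map by assembling two partial constructions and gluing them along a coherent Nagata-compactification decomposition. From the hypothesis, smooth pullback has left adjoints $f_{\#}$ with smooth base change and smooth projection formula, and proper pullback has right adjoints $f_*$ with proper base change and proper projection formula. The target $\op{Mod}(\op{Pr}^L_{stb})$ is designed so that the projection formulas are precisely the statement that $f_{\#}$ and $f_*$ are $\D^{\otimes}$-linear in the $\infty$-categorical sense. So at the level of objects, the map should send a morphism $f: Y \to X$ to the pair $(\D^{\otimes}(X), \D(Y))$ with the module structure induced by $f^*$, and the content of the theorem is making this assignment functorial over the large bisimplicial nerve of Cartesian squares of type $F \times \op{ALL}$.

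First I would construct the map on the sub-bisimplicial set where direction $1$-edges are restricted to proper morphisms, using the proper adjunction $p^* \dashv p_*$. The required $\infty$-categorical input is the dual/opposite of the adjointability criterion in \cite[Corollary 4.7.4.18]{HA}: horizontal right-adjointability of all $1\times 1$ squares (which is exactly proper base change) upgrades the diagram $\D^{\otimes}$ to a functor on the bisimplicial nerve where the proper direction is realized by $p_*$. Proper projection formula ensures that each $p_*$ is $\D^{\otimes}$-linear and therefore the lift factors through $\op{Mod}(\op{Pr}^L_{stb})$. Repeating with the open-immersion direction and the adjunction $j_{\#} \dashv j^*$, smooth base change and smooth projection formula produce an analogous construction $\op{EO}_O$ on the sub-bisimplicial set of open immersions.

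Next I would glue these two partial maps into a single map defined for the class $F$ of separated morphisms of finite type. By Nagata compactification, any $f \in F$ factors as $f = p \circ j$ with $j$ open and $p$ proper, and the space of such factorizations is weakly contractible (in a suitable Zariski-local sense). The support property is exactly the piece of coherence that allows $\op{EO}_P$ and $\op{EO}_O$ to be combined on mixed pullback squares, so the composites $p_* \circ j_{\#}$ depend functorially on $f$ and not on the factorization. Formally, one builds an auxiliary trisimplicial/tiled nerve whose simplices are compatible compactifications and shows the forgetful map to $\op{Fun}(\Delta^1, N(\Schf))^{\op{cart}}_{F,\op{ALL}}$ is a categorical equivalence (this is the essence of Liu--Zheng's "multi-tiled" formalism from the preceding subsection); pushing the glued map along the inverse equivalence yields $\op{EO}(\D^{\otimes})$. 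Applying the restriction $\op{dir}_F$ from \cref{directionmapnotation} then recovers $\SH_!$, and evaluating on $(1,1)$- and degenerate simplices reads off base change and projection, respectively.

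The main obstacle is not the one-categorical construction of $f_! = p_* \circ j_{\#}$, but verifying all the $\infty$-categorical coherences simultaneously: that the various base-change equivalences, the projection-formula equivalences, the support-property equivalences, and the independence of the Nagata factorization are all compatible up to coherent higher homotopy. This is exactly what makes the multi-tiled nerve formalism necessary, since a naive simplicial construction cannot carry all these compatibilities at once. In practice, I would invoke Liu--Zheng's abstract gluing lemma (which mechanizes precisely this inductive coherence verification using the left/right adjointability framework of \cite[\S 4.7.4]{HA}) as a black box, rather than attempt to reconstruct the coherence data by hand; the hypotheses in \cref{mainnot} are tailored exactly so that this lemma applies to $\D^{\otimes}$.
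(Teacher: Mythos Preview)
The paper does not supply a proof of this theorem: it is stated as a citation of \cite[Section 3.2]{liu2017enhanced} (and \cite[Section 9.4]{Robalothesis}), and the surrounding text only unpacks the map on $0$- and $1$-simplices, remarking that the actual construction ``involves the theorem of partial adjoints (\cite[Proposition 1.4.4]{liu2017enhanced}) and $\infty$-categorical gluing for compactifiable morphisms (\cite[Theorem 0.1]{Gluerestnerv}).'' So there is nothing in the paper to compare your argument against beyond that one-line summary.

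Your sketch is a faithful expansion of exactly those two ingredients: the partial-adjoint/adjointability machinery from \cite[\S 4.7.4]{HA} to pass from $f^*$ to $p_*$ (proper) and $j_{\#}$ (open), and the multi-tiled gluing along Nagata compactifications with the support property supplying the cross-compatibility. That is the correct shape of the Liu--Zheng construction. One terminological quibble: what Liu--Zheng call the ``DESCENT program'' is the extension step from schemes to stacks (this paper's \cref{eomapext}), not the scheme-level construction you are outlining here; the latter is their ``enhanced operation'' construction proper. Also, the adjointability input you need is not literally \cite[Corollary 4.7.4.18]{HA} but its refinement to the bisimplicial/module setting, which is precisely the ``partial adjoints'' proposition the paper cites. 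With those small corrections your outline matches the intended references.
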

	\begin{remark}
		\begin{enumerate}
			\item The above theorem can be applied to the functor $\D^{\otimes} = \Sho$ as $\Sho$ satisfies all conditions of \cref{mainnot}. This constructs the lower shriek functors, projection formula and base change (see \cite[Theorem 9.4.36]{Robalothesis}).
			\item The enhanced operation map takes values in $\op{Mod}(\op{Pr}^L_{stb})$ as we wanted. Let us try to explain the source of enhanced operation and justify the motivation that we gave in the beginning of this chapter. In order to encode the module objects, the source consists morphisms of schemes as objects. Hence this motivates considering the functor category $\op{Fun}(\Delta^1,\Sc)$. The functor $\dd^*_{2,\{2\}}(-)$ encodes pullback squares as $1$-simplices as we explained. Taking opposite direction along $\{2\}$ is motivated from the fact that the pullback functor is contravariant. 
			\item The construction of the enhanced operation map is technical (see \cite[Section 3.2]{liu2017enhanced} and \cite[Section 9.4]{Robalothesis}). It involves the theorem of partial adjoints (\cite[Proposition 1.4.4]{liu2017enhanced}) and $\infty$-categorical gluing for compactifiable morphisms (\cite[Theorem 0.1]{Gluerestnerv}). We shall explain how the enhanced operation map is defined on $0$ and $1$-simplices. The description of $\op{EO}(\D^{\otimes})$ on lower simplices shall help us to understand how the lower shriek functors, projection formula and base change are encoded in $\op{EO}(\D^{\otimes})$.
		\end{enumerate}
		
	\end{remark}

Let us explain the map $\op{EO}(\D^{\otimes})$ on the level of $0$ and $1$ simplices. The $0$ and $1$ simplices of $\dd^*_{2,\{2\}}(\op{Fun}(\Delta^1,N(\Schf)))_{F,\op{ALL}}$ are:
	
	\begin{enumerate}\label{eomapsource}
		\item $0$- simplices are maps of schemes $f: Y \to X$.
		\item  A morphism from $f_0:Y_0 \to X_0$ to $f_3: Y_3 \to X_3$ is a morphism of the form $\Delta^1 \times \Delta^1 \to \op{Fun}(\Delta^1,N(\Schf))$ with conditions of edges and pullback squares. Explicitly, it is a cube of the form
		\begin{equation}\label{cube}
			\begin{tikzcd}[row sep=scriptsize,column sep=scriptsize]
				Y_0  \arrow[dd,"f_0"]\arrow[rr,"v'"] \arrow[dr,"u'"] && Y_1 \arrow[dr,"u"] \arrow[dd,"f_1", yshift=1ex]  & {}\\
				& Y_2 \arrow[dd,"f_2", yshift=2ex]\arrow[rr,"v",crossing over,swap, xshift=-1ex]  && Y_3 \arrow[dd,"f_3"]  \\
				X_0 \arrow[dr,"p'",xshift=1ex] \arrow[rr,"q'",crossing over, xshift=-1ex] && X_1 \arrow[dr,"p"] & {} \\
				{} & X_2 \arrow[rr,"q",crossing over]  && X_3 
			\end{tikzcd}
		\end{equation}
		where the diagonal maps are separated morphisms of finite type and the top and bottom squares are cartesian. The horizontal squares are considered on the opposite direction.
	\end{enumerate}
	In general, $n$-simplices of the source of $\op{EO}(\D^{\otimes})$ are maps  \[ F_n: \Delta^n \times (\Delta^n)^{op}\times \Delta^1 \to \Schf \] such that $F_n|_{\Delta^n \times (\Delta^n)^{op} \times \{i\}}$ for $i =0,1$ determines an $n$-simplex in  $\delta^*_{2,\{2\}}(\Schf)_{F',\op{ALL}'}^{\op{cart}}$ where $F'$ is set of edges which are separated and finite type and $\op{ALL}'$ are all morphisms of schemes. \\
	
	Let us explain what $\op{EO}(\D^{\otimes})$ does on the level of $0$ and $1$ simplices. 
	\begin{enumerate}\label{eomaplevel}
		\item $\op{EO}(\D^{\otimes})(f):= (\D^{\otimes}(X),\D(Y))$ where $f: Y \to X$ is an arbitrary morphism of schemes. Here $\D(Y)$ is a $\D^{\otimes}(X)$-module via $f^*$.
		\item For a cube of the form \cref{cube}, $\op{EO}(\D^{\otimes})$ sends the cube to a morphism of modules
		\[ (\D(X_0)^{\otimes},\D(Y_0)) \to (\D(X_3)^{\otimes},\D(Y_3)).   \] 
		This shall help us to encode the projection formula which is a statement formulated as morphism of modules.
	\end{enumerate}

	Now we explain how the extraordinary pushforward, projection formula and base change are encoded in the map $\op{EO}(\D^{\otimes})$. 
	
	\begin{enumerate}\label{eopushprojbas}
		\item \underline{\textbf{The enhanced pullback:}}
		The map $\op{EO}(\D^{\otimes})$ encodes the map $\D^{\otimes}$. 
		When restricted the map along the direction $\op{ALL}$ (\cref{directionmapnotation}), then the induced map 
		\[ \op{EO}(\D^{\otimes})^*: \Schf^{op} \xrightarrow{X \to (X \to \Sp \mathbf{Z})} \op{Fun}(\Delta^1,\Schf)^{op} \xrightarrow{\op{EO}(\D^{\otimes})\circ \op{dir}_{\op{ALL}}}
		\op{Mod}(\op{Pr}^L_{stb})  \]
		sends a scheme $X$ to the module object $(\D^{
			\otimes}(X),\D(X))$ and sends a morphism of schemes $ f: X \to Y$ to the pullback morphism 
		\[ (\op{id},f^*): (\D^{\otimes}(\Sp \mathbf{Z}),\D(Y)) \to (\D^{\otimes}(\Sp \mathbf{Z}),\D(X)). \] 
		This is called the enhanced pullback map. Restricting it to the first coordinate, we get the map \[ \D: \Schf^{op} \to \op{Pr}^L_{stb}. \]
		\item \underline{\textbf{The extraordinary pushforward:}}
		We have a canoncial map 
		\[ \op{dir}_F:\op{Fun}(\Delta^1,\Schf') \to \dd^*_{2,\{2\}}(\op{Fun}(\Delta^1,\Schf))^{\op{cart}}_{F,\op{ALL}}   \] which is the restriction direction along $F$ (\cref{directionmapnotation}).\\ This induces the map
		\[ \op{EO}(\D^{\otimes})_!: \Schf' \to \op{Fun}(\Delta^1,\Schf') \xrightarrow{\op{EO}(\D^{\otimes})\circ\op{dir}_F} \op{Mod}(\op{Pr}^L_{stb}) \] where the first map is induced by $X \to (X \to \Sp\mathbf{Z})$.\\
		Combining with the description of $\op{EO}(\D^{\otimes})$ in \cref{eomaplevel}, the map $\op{EO}(\D^{\otimes})_!$ sends a morphism $g: Y \to X$ in $\Schf'$ to a morphism of modules
		\[(\op{id},g_!): (\D^{\otimes}(\Sp \mathbf{Z}),\D(Y))\to (\D^{\otimes}(\Sp \mathbf{Z}),\D(X)). \]
		We call $\op{EO}(\D^{\otimes})_!$ the \textit{enhanced extraordinary pushforward map}.	\\
		Via the restriction functor $\op{Mod}(\op{Pr}^L_{stb}) \to \op{CAlg}(\op{Pr}^L_{stb})$, we get the extraordinary pushforward functor 
		\[ \SH_!: \Schf' \to \op{Pr}^L_{stb} \]
		which sends a morphism $ f: X \to Y$ to the functor $f_!: \SH(X) \to \SH(Y)$.	 
		\item $\underline{\textbf{Projection formula:}}$ 
		For $f: Y \to X$ be a separated and finite type morphism, consider the cube:
		\begin{equation}
			\begin{tikzcd}[row sep=scriptsize,column sep=scriptsize]
				Y \arrow[dd,"f"] \arrow[dr,"f"] \arrow[rr,"\op{id}"] && Y \arrow[dr,"f"] \arrow[dd,"f", yshift=2ex]  & {}\\
				& X \arrow[dd,"\op{id}", yshift=-1ex] \arrow[rr,crossing over,xshift=1ex,"\op{id}"] && X \arrow[dd,"\op{id}"]  \\
				X \arrow[dr,"\op{id}"]\arrow[rr,xshift=1ex,"\op{id}", crossing over]  &&  X  \arrow[dr,"\op{id}"] & {} \\
				{} & X  \arrow[rr,"\op{id}"]  && X
			\end{tikzcd}
		\end{equation}
		
		Evaulating $\op{EO}(\D^{\otimes})$ (using the description in \cref{eomaplevel}) on this cube yields a morphism of modules 
		\[ (\op{id},f_!):(\D^{\otimes}(X),\D(Y)) \to (\D^{\otimes}(X),\D(X))  \] where $\D(Y)$ is a $\D(X)^{\otimes}$-module via $f^*$ and $\D(X)$ is a $\D(X)^{\otimes}$-module via the tensor product. This equivalent to the module homomorphism (\cref{modhomomorphism}) explained in the beginning of the chapter and hence it gives us the projection formula.

		\item $\underline{\textbf{Base change:}}$ Consider the cartesian square of schemes 
		\begin{equation}
			\begin{tikzcd}
				X' \arrow[r,"f'"] \arrow[d,"g'"] & Y' \arrow[d,"g"]\\
				X \arrow[r,"f"] & Y
			\end{tikzcd}
		\end{equation}
		where $f$ and $f'$ are separated morphism of finite type. Let us explain how the map $\op{EO}(\D^{\otimes})$ encodes the base change.\\
		The above pullback square gives us a $1$-simplex $\gamma$ in $\dd^*_{2,\{2\}}\op{Fun}(\Delta^1,\Schf)^{\op{cart}}_{F,\op{ALL}}$ which is cube:
		
		\begin{equation}
			\begin{tikzcd}[row sep=scriptsize,column sep=scriptsize]
				X' \arrow[dd] \arrow[dr,"g'"] \arrow[rr,"f'"] && Y' \arrow[dr,"g"] \arrow[dd]  & {}\\
				&X\arrow[dd]\arrow[rr,"f",,crossing over,swap, xshift=1ex]   && Y \arrow[dd] \\
				\Sp \mathbf{Z} \arrow[rr]\arrow[dr]  && \Sp\mathbf{Z}\arrow[dr] & {} \\
				{} & \Sp\mathbf{Z} \arrow[rr] &&\Sp\mathbf{Z} 
			\end{tikzcd}
		\end{equation} 
		
		The above $1$-simplex gives us two $2$-simplices $\sigma$ and $\tau$ in $$\dd^*_{2,\{2\}}\op{Fun}(\Delta^1,
		\Schf)^{\op{cart}}_{F,\op{ALL}}$$ which are morphisms  \[\Delta^2 \times (\Delta^2)^{op} \times \Delta^1 \to \Schf\]  whose upper $3 \times 3$ grids are:
		\begin{enumerate}
			\item 
			\begin{equation}\label{ss}
				\sigma|_{\Delta^2 \times \Delta^2 \times \{0\}}:=
				\begin{tikzcd}[row sep=scriptsize,column sep=scriptsize]
					Y' \arrow[d,"g"] & Y'\arrow[l]\arrow[d,"g"] & X'\arrow[l,"f'"] \arrow[d,"g'"]\\
					Y \arrow[d] & Y \arrow[l]\arrow[d] & X \arrow[l,"f"] \arrow[d]\\
					Y & Y \arrow[l] & X \arrow[l,"f"]
				\end{tikzcd}
			\end{equation}
			
			\item	
			\begin{equation}\label{tt}
				\tau|_{\Delta^2 \times \Delta^2 \times \{0\}}:=
				\begin{tikzcd}[row sep=scriptsize,column sep=scriptsize]
					Y' \arrow[d] & X'\arrow[l,"f'"]\arrow[d] & X'\arrow[l] \arrow[d]\\
					Y' \arrow[d,"g"] & X'\arrow[l,"f"]\arrow[d,"g'"] & X'\arrow[l] \arrow[d,"g'"]\\
					Y & X \arrow[l,"f"] & X \arrow[l]
				\end{tikzcd}
			\end{equation}
		\end{enumerate}
		As $d^2_1(\sigma)= d^2_1(\tau) = \gamma$, \\
		We have \[ \op{EO}(\T^{\otimes})(\gamma) \cong \op{EO}(\T^{\otimes})^*(g)\circ \op{EO}(\T^{\otimes})_!(f) = (\op{id},g^*\circ f_!)  \]
		and \[ \op{EO}(\T^{\otimes})(\gamma) \cong \op{EO}(\T^{\otimes})_!(f') \circ \op{EO}(\T^{\otimes})^*(g') = (\op{id},f'_!\circ g'^*). \]
		Restricting to the second coordinate, we have \[ f_! \circ g^* \cong g'^* \circ f'_!  \]
		which is the base change formalism. 
		
	\end{enumerate}

	\subsection{Proof of \cref{thmmain2}.}\label{proofofthmmain2}

	Let us denote the collection of squares (i.e. morphisms in $\op{Fun}(\Delta^1,\Nst)$) 
	\begin{equation}
		\begin{tikzcd}
			\X_0 \arrow[r,"f"] \arrow[d] & \X_1 \arrow[d] \\
			\Y_0 \arrow[r,"f'"] & \Y_1
		\end{tikzcd}
	\end{equation}
	where $f$ and $f'$ are separated of finite type by $F'$. The idea of the proof of \cref{thmmain2} is the following: \\
	Suppose we construct a morphism 
	\begin{equation}
		\op{EO}(\Shext): \dd^*_{2,\{2\}}\op{Fun}(\Delta^1,\Nst)_{F',\op{ALL}} \to \op{Mod}(\op{Pr}^L_{stb})
	\end{equation}
	which extends $\op{EO}(\Sho)$, then the discussion in \cref{eopushprojbas} gives us the lower shriek functor, projection formula and base change for our functor $\Shext$. Thus proving \cref{thmmain2} is reduced to extending the functor $\op{EO}(\Sho)$ to $\op{Nis-locSt}$. We extend the functor in a two step process as we did for extending $\SH$ from schemes to algebraic stacks.
	Thus we formulate a proposition in setting of category of stacks admitting $\T$-local sections (\cref{stacklocsecdef}). \\
	The proposition is a special case of the DESCENT program stated in \cite[Theorem 4.1.8]{liu2017enhanced}). Our proof is inspired from the proof of Liu and Zheng and give a new proof of the theorem. Before stating the proposition, let us fix notations in the context of category of stacks admitting $\T$-local sections.
	
	\begin{notation}
		Let $\E'$ be a collection of edges in $N^D_{\bb}(\op{St}_{\Ca})$ which are representable in $\Ca$, stable under pullback and compositions. We denote the collection of edges in $\E'$ which are in $\Ca$ by $\E$. \\
		We shall denote the collection of commutative squares 
		\begin{equation}
			\begin{tikzcd}
				\X_0 \arrow[r,"f"] \arrow[d,"v'"] & \X_1 \arrow[d,"v"] \\
				\Y_0 \arrow[r,"f'"] & \Y_1
			\end{tikzcd}
		\end{equation}
		where $f ,f' \in \E'$ by $F'$. The collection of all such pullback squares in $\Ca$ shall be denoted by $F$. \\
		We assume that there exists a functor 
		\[ \op{EO}(\D^{\otimes}) := \dd^*_{2,\{2\}}\op{Fun}(\Delta^1,N(\Ca))^{cart}_{F,\op{ALL}} \to \op{Mod}({Pr}^L_{stb}) \]
		which when restricted to the direction $\op{ALL}$ gives an $\infty$-sheaf \[ \D'^{\otimes}:\op{Fun}(\Delta^1,N(\Ca))^{op} \to \op{Mod}(\op{Pr}^L_{stb}) \]
		with respect to the topology induced by $\T$ on the functor category in a canonical way (the coverings on a object in $\op{Fun}(\Delta^1,N(C))$ are given by commutative squares in $C$ where the vertical arrows are coverings).  
	\end{notation}
	
	\begin{proposition}\label{eomapext}
		The functor $\op{EO}(\D^{\otimes})$ extends to a functor
		\begin{equation}
			\op{EO}(\D^{\otimes}_{ext}): \op{EO}(\op{St}_{\Ca}):=\dd^*_{2,\{2\}}\op{Fun}(\Delta^1,N^D_{\bb}(\op{St}_{\Ca}))_{F',\op{ALL}} \to \op{Mod}(\op{Pr}^L_{stb}).
		\end{equation}
	\end{proposition}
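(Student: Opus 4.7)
The plan is to mirror the strategy used in the proof of \cref{thmmain}: build an auxiliary category of "tagged" simplices carrying a choice of atlas data, show that the forgetful projection to $\op{EO}(\op{St}_{\Ca})$ is a localization, define the extension on the tagged category by passing to limits over \v{C}ech nerves, and use descent of $\D'^{\otimes}$ to show the definition descends to the localization. Concretely, I would form a simplicial set $\op{Cov}(\op{EO}(\op{St}_{\Ca}))$ whose $n$-simplices are $n$-simplices $\sigma_n : \Delta^n \times (\Delta^n)^{op} \times \Delta^1 \to N^D_\bullet(\op{St}_{\Ca})$ of the source together with a compatible system of atlases admitting $\T$-local sections at every vertex, chosen so that the corresponding squares and edges along $F'$ are recovered by pullback in $\Ca$. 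The forgetful map $p : \op{Cov}(\op{EO}(\op{St}_{\Ca}))^{op} \to \op{EO}(\op{St}_{\Ca})^{op}$ plays the role of the map studied in \cref{covislocalization}.

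The first technical step is a surjectivity statement on $n$-simplices: given any $\sigma_n$ in $\op{EO}(\op{St}_{\Ca})$, produce a compatible refinement by atlases. I would proceed by induction on $n$, applying \cref{covnonemptyclaim} at each vertex and then using \cref{tlocrep} together with the pullback-stability of $\T$-local sections (\cref{covpullbandcomp}) to propagate the choice along the $F'$-edges: because those edges are representable, the pullback of an atlas along an $F'$-edge lives in $\Ca$ and satisfies the required cartesian compatibility with neighbouring squares. Once surjectivity on simplices is established, the argument of \cref{covislocalization} (lifting against $\partial \Delta^n \hookrightarrow \Delta^n$ using that $\op{Fun}(\Delta^n,\D) \to \op{Fun}(\partial\Delta^n,\D)$ is an isofibration and that refinements become equivalences in $\op{Cov}(\op{EO}(\op{St}_{\Ca}))[R^{-1}]$) goes through verbatim, yielding that $p$ is a localization along the class $R$ of refinements.

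Next I would define a map $\phi : \op{Cov}(\op{EO}(\op{St}_{\Ca}))^{op} \to \op{Mod}(\op{Pr}^L_{stb})$ by applying $\op{EO}(\D^{\otimes})$ to the \v{C}ech nerve of the chosen atlas and taking a limit in $\op{Mod}(\op{Pr}^L_{stb})$ (which admits all small limits since both $\op{CAlg}(\op{Pr}^L_{stb})$ and modules over them do). The crucial verification is that $\phi$ sends refinements to equivalences: this is exactly the descent property of $\D'^{\otimes}$ along $\T$-local coverings, which by assumption holds in $\op{Fun}(\Delta^1,N(\Ca))$ and is transported by \cref{equivprodlemma} to the \v{C}ech nerves of any two comparable atlases of a given simplex. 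Combined with the localization property of $p$, this produces the desired extension $\op{EO}(\D^{\otimes}_{ext})$ by the universal property of localizations, and restricting $\op{EO}(\D^{\otimes}_{ext})$ along the direction-map $\op{ALL}$ recovers the already-extended $\Shext$ on $N^D_\bullet(\op{St}_{\Ca})$, proving the extension is consistent.

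The main obstacle I anticipate is the combinatorial step of simultaneously lifting an $n$-simplex $\sigma_n$ of the source to an atlas-tagged simplex while preserving both the representability/separatedness along $F'$-edges and the cartesian condition on mixed $F' \times \op{ALL}$ squares. The key point is that $F'$-edges are representable by hypothesis, so once an atlas is fixed at a single vertex it can be propagated along $F'$ by base change without leaving $\Ca$, while atlases along $\op{ALL}$-edges can be refined using \cref{covnonemptyclaim}; reconciling these two choices so that the resulting data defines an $n$-simplex of $\op{Cov}(\op{EO}(\op{St}_{\Ca}))$ rather than just a compatible family requires a careful induction that mirrors the partial-adjoint / gluing arguments in \cite[Section 3.2]{liu2017enhanced}, but now only in the easier situation where the abstract descent property is already available from the first extension.
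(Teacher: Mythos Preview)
Your proposal is correct and follows essentially the same strategy as the paper: build an auxiliary simplicial set $\op{EOCov}(\op{St}_{\Ca})$ of simplices tagged with compatible atlas data, show that the projection to $\op{EO}(\op{St}_{\Ca})$ becomes a categorical equivalence after inverting refinements, define the extension via limits over \v{C}ech nerves, and use descent of $\D'^{\otimes}$ to see that refinements go to equivalences. The only packaging difference is that the paper realizes $\op{EOCov}(\op{St}_{\Ca})$ as $\dd^*_{2,\{2\}}\op{Fun}(\Delta^1,\op{Cov}(\op{St}_{\Ca}))^{\op{cart}}_{F',\op{ALL}}$ and then transports the already-proven localization of \cref{covislocalization} through this functorial construction, rather than rerunning the lifting argument from scratch as you propose.
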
    
	\textbf{Idea of the proof:}
	The proof is similar to the proof of \cref{thmmain}. \\
	
	We shall denote the simplicial set $
	\dd^*_{2,\{2\}}\op{Fun}(\Delta^1,\op{Cov}(\op{St}_{\Ca}))_{F',\op{ALL}}$ by $\op{EOCov}(\op{St}_{\Ca})$. We also denote the projection map \[\op{EOCov}(\op{St}_{\Ca}) \to \op{EO}(\op{St}_{\Ca}) \] induced by the map $p: \op{Cov}(\op{St_{\Ca}}) \to N^D_{\bb}(\op{St}_{\Ca})$ by $p_{EO}$.\\
	The following claim implies that the morphism $p_{EO}$ is surjective on every simplex.
	\begin{claim}
		Let $\tau_n$ be a morphism $\tau_n:\Delta^n \times \Delta^n \to N^D_{\bb}(\op{St}_{\Ca})$ which is an $n$-simplex of $\dd^*_{2,\{2\}}(N^D_{\bb}(\op{St}_{\Ca}))^{cart}_{F',\op{ALL}}$. Then there exists a map \[\tau^1_n: \Delta^1 \times \Delta^n \times \Delta^n \to N^D_{\bb}(\op{St}_{\Ca}) \] such that 
		\begin{enumerate}
			\item $\tau^1_n|_{[0] \times \Delta^n \times \Delta^n}$ is a $n$-simplex of $\dd^*_{2,\{2\}}N(\Ca)^{cart}_{F,\op{ALL}}$,
			\item $\tau^1_n|_{[1] \times \Delta^n \times \Delta^n} = \tau_n$ and
			\item $\tau^1_n: [k] \times [i] \times [j]: \to N^D_{\bb}(\op{St}_{\Ca})$ is a \v{C}ech nerve of a morphism admitting $\T$-local sections for all $0 \le i ,j \le n$ and $ 0 \le k \le 1$.
		\end{enumerate}
		
	\end{claim}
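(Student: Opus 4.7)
The plan is to proceed by induction on $n$, closely paralleling the proof of \cref{covnonempty} while additionally preserving the cartesianness of squares in direction $\{1,2\}$. At each stage, I will produce atlases $X_{ij} \to \X_{ij}$ of all the vertices $\X_{ij} = \tau_n(i,j)$ such that (a) all maps between atlases live in $\Ca$ and assemble into an $n$-simplex of $\dd^*_{2,\{2\}}N(\Ca)^{\op{cart}}_{F,\op{ALL}}$, and (b) each atlas $X_{ij} \to \X_{ij}$ admits $\T$-local sections; these data then furnish $\tau^1_n$ via the Čech nerve of each atlas.

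For $n=0$ any atlas of $\X$ admitting $\T$-local sections supplies $\tau^1_0$, which exists by \cref{stacklocsecdef}. For $n=1$, the simplex $\tau_1$ is a cartesian square of stacks whose horizontal arrows lie in $\E'$. Choose an atlas $X_{11} \to \X_{11}$ admitting $\T$-local sections, use \cref{covnonemptyclaim} to produce compatible atlases $X_{01} \to \X_{01}$ and $X_{10} \to \X_{10}$ fitting into commutative squares with $X_{11}$, and define $X_{00} := X_{01} \times_{X_{11}} X_{10}$ in $\Ca$. Since the horizontal arrows of the stack square are representable, $X_{00}$ exists in $\Ca$ and is an atlas of $\X_{00}$ by stability of $\T$-local sections under pullback (\cref{covpullbandcomp}). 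The resulting square in $\Ca$ is cartesian by construction, and its horizontal arrows lie in $\E$ by the stability assumption on $\E'$.

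For the inductive step, apply the inductive hypothesis to each codimension-one face of $\tau_n$ to obtain compatible lifts on the boundary. To glue these into a single lift, one builds atlases along a maximal chain (say the top row $\X_{0,n}, \dots, \X_{n,n}$ and the rightmost column $\X_{n,0}, \dots, \X_{n,n}$) using \cref{covnonemptyclaim}, then defines each interior atlas $X_{ij}$ as an iterated fiber product, over $X_{nn}$, of the atlases already constructed along the unique ``image'' edges terminating at $\X_{ij}$. Every newly formed square in direction $\{1,2\}$ is obtained by base change of an existing cartesian square in $\op{St}_{\Ca}$ along the chosen atlases, hence is cartesian in $\Ca$; arrows in direction $\{1\}$ remain in $\E$ because $\E$ is stable under pullback and composition; and the vertical arrows $X_{ij} \to \X_{ij}$ admit $\T$-local sections for the same reason.

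The main obstacle, as in \cref{covnonempty}, is the bookkeeping: one must verify that choices of atlases made along different chains in the grid yield mutually compatible pullback squares at every interior vertex. This is handled by universal properties of pullbacks, provided all atlases are constructed sequentially from the single starting atlas $X_{nn} \to \X_{nn}$. Once coherence is arranged, condition (2) is immediate, condition (1) follows because every vertex and arrow of $\tau^1_n|_{\{0\}\times\Delta^n\times\Delta^n}$ lives in $\Ca$ by construction, and condition (3) follows because each edge $\tau^1_n|_{\Delta^1 \times \{i\} \times \{j\}}$ is precisely an atlas admitting $\T$-local sections, so its Čech nerve defines an object of $\op{Cov}(\op{St}_{\Ca})$. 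This surjectivity-on-simplices statement is exactly what is needed to carry out the localization argument that, following the template of \cref{covislocalization,thmmain}, yields the desired extension $\op{EO}(\D^{\otimes}_{\op{ext}})$.
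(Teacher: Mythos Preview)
Your overall strategy matches the paper's: prove the case $n=1$ explicitly by constructing atlases of the four vertices so that the lifted square is cartesian in $\Ca$, then induct. The paper likewise only spells out $n=1$ and remarks that higher $n$ follows by a compatible choice of atlases.

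There is, however, a genuine gap in your $n=1$ step (and it propagates to the inductive step). You build atlases $X_{01}$ and $X_{10}$ from $X_{11}$ using \cref{covnonemptyclaim} and then assert that ``its horizontal arrows lie in $\E$ by the stability assumption on $\E'$''. But \cref{covnonemptyclaim} only produces a commutative square with $f'$ an arbitrary morphism of $\Ca$: in its proof one takes $X = X_0 \times_{\X} (\X \times_{\Y} Y)$, so the resulting arrow $X \to Y$ is a composite of an atlas map with a pullback of $f$, and atlas maps need not lie in $\E$. Hence $X_{10} \to X_{11}$ (or $X_{01} \to X_{11}$, whichever lies in the $F'$-direction) is not forced to be in $\E$, and neither is its pullback $X_{00} \to X_{01}$.

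The paper avoids this by treating the two directions asymmetrically. It starts from an atlas $h_0: X_0 \to \X_0$, uses \cref{covnonempty} only in the $\op{ALL}$-direction to obtain $X_2 \to \X_2$, and then exploits representability of $f_0 \in \E'$ to set $X_1 := X_0 \times_{\X_0} \X_1$. This makes $f'_0: X_1 \to X_0$ a genuine pullback of $f_0$, so $f'_0 \in \E$ by stability of $\E'$ under base change. Finally $X_3 := X_1 \times_{X_0} X_2$ gives $f'_2 \in \E$ as a further pullback and simultaneously makes the back face cartesian. The fix for your argument is therefore: in the $F'$-direction do \emph{not} invoke \cref{covnonemptyclaim}, but instead define the atlas by pulling back along the representable morphism in $\E'$; reserve \cref{covnonemptyclaim} for the $\op{ALL}$-direction. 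With this modification your inductive scheme (build along one row via \cref{covnonempty}, then fill in by pullbacks along the $\E'$-direction) goes through exactly as the paper intends.
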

	The claim implies surjectivity of $p_{EO}$ because one can apply the claim to upper and lower pullback squares of any $n$-simplex of $\dd^*_{2,\{2\}}\op{Fun}(\Delta^1,N^D_{\bb}(\op{St}_{\Ca}))^{cart}_{F,\op{ALL}}$. By taking fiber products, this produces an edge in $\op{Fun}((\Delta^n)^{op} \times \Delta^n,\op{Fun}(\Delta^1,N^D_{\bb}(\op{St}_{\Ca})))$. Considering the \v{C}ech nerve of the edge gives us an $n$-simplex of $\op{EOCov}(\op{St}_{\Ca})$.
	\begin{proof}
		We prove it for $n=1$. The case of higher $n$ follows from induction choosing a compatible choice of atlas. We want to show that for a pullback square of the form 
		\begin{equation}
			\begin{tikzcd}
				\X_1 \arrow[d,"f_0"] & \X_3 \arrow[l,"g_1"] \arrow[d,"f_2"] \\
				\X_0 & \X_2 \arrow[l,"g_0"]
			\end{tikzcd}
		\end{equation}
		in $\op{St}_{\Ca}$ where $f_0$ and $f_2$ are in $\E'$, there exists a cube of the form 
		\begin{equation}
			\begin{tikzcd}[row sep=scriptsize,column sep=scriptsize]
				X_1  \arrow[dd,"f'_0"] \arrow[dr,"h_1"] && X_3 \arrow[dr,"h_3"] \arrow[dd,"f'_2"] \arrow[ll] & {}\\
				& \X_1 \arrow[dd,"f_0", swap]  && \X_3 \arrow[dd,"f_2"] \arrow[ll,crossing over] \\
				X_0 \arrow[dr,"h_0"]  && X_2 \arrow[dr,"h_2"]\arrow[ll,crossing over] & {} \\
				{} & \X_0  && \X_2 \arrow[ll,"g_0"]
			\end{tikzcd}
		\end{equation}
		where the square formed by vertices of $X_0,X_1,X_2$ and $X_3$ is a pullback square, $f'_0,f'_2 \in \E$ and $h_0,h_1,h_2,h_3$ are atlases admitting $\T$-local sections.\\
		
		Let $h_0: X_0 \to \X_0$ be an atlas admitting $\T$-local sections. By \cref{covnonempty}, there exists a commutative square of the form 
		\begin{equation}
			\begin{tikzcd}
				X_0 \arrow[d,"h_0"] & X_2 \arrow[l] \arrow[d,"h_2"] \\
				\X_0 & \X_2 \arrow[l]
			\end{tikzcd}
		\end{equation}
		where $h_0$ and $h_2$ are atlases admitting $\T$-local sections. As $f_0$ is representable, the base change morphism $h_1: X_1:= X_0 \times_{\X_0} \X_1 \to \X_1$ is an atlas admitting $\T$-local sections. Also $f'_0: X_1 \to X_0$ lies in $\E$. Then defining $ X_3:= X_1 \times_{X_0} X_2$ gives us the cube that we wanted. 
		
	\end{proof}
	We shall denote the collection of edges in $\op{Fun}(\Delta^1,\op{Cov}(\op{St}_{\Ca}))$ (i.e. morphisms of the form $\Delta^1 \times \Delta^1 \to \op{St}_{\Ca}$) which are of the form: 
	\begin{equation}
		\begin{tikzcd}
			(\X,x: X \to \X) \arrow[d] \arrow[r,"f"] & (\X,x': X' \to \X) \arrow[d]\\
			(\Y,y: Y \to \Y) \arrow[r,"f'"] & (\Y,y': Y'\to \Y)
		\end{tikzcd}
	\end{equation}
	by $R'$.
	We shall also denote the simplicial set $ \dd^*_{2,\{2\}}\op{Fun}(\Delta^1,\op{Cov}(\op{St}_{\Ca}))[R^{-1}]^{cart}_{F',\op{ALL}}$ by \\ $\op{EOCov}(\op{St}_{\Ca})[R'^{-1}]$. We have the following claim.
	\begin{claim}\label{catequivext}
		The map \[p'_{EO}: \op{EOCov}(\op{St}_{\Ca})[R'^{-1}] \to \op{EO} (\op{St}_{\Ca}) \] induced by the map  $p'':\op{Fun}(\Delta^1,\op{Cov}(\op{St}_{\Ca}))[R^{-1}] \to \op{Fun}(\Delta^1, N^D_{\bb}(\op{St}_{\Ca}))$ is categorical equivalence of simplicial sets.
	\end{claim}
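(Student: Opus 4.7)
The plan is to mimic the argument of \cref{covislocalization} one dimension up, i.e. to show that $p'_{EO}$ is a trivial Kan fibration (which implies categorical equivalence by \cite[Proposition 23.11]{Rezk}). This reduces the problem to solving every lifting problem
\begin{equation*}
\begin{tikzcd}
\partial \Delta^n \arrow[r,"\tau_n"] \arrow[d,hookrightarrow] & \op{EOCov}(\op{St}_{\Ca})[R'^{-1}] \arrow[d,"p'_{EO}"] \\
\Delta^n \arrow[r,"\sigma_n"] \arrow[ur,dotted] & \op{EO}(\op{St}_{\Ca}).
\end{tikzcd}
\end{equation*}

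First I would handle surjectivity on $0$-simplices and on general $n$-simplices. The claim just proved above (the cube-lifting statement combined with induction on $n$) shows that $p_{EO}:\op{EOCov}(\op{St}_{\Ca})\to \op{EO}(\op{St}_{\Ca})$ is surjective on every simplex; since $\op{EOCov}(\op{St}_{\Ca})[R'^{-1}]$ and $\op{EOCov}(\op{St}_{\Ca})$ share the same set of objects and the anodyne map to the localization is bijective on simplices in each degree (via \cite[Proposition 6.3.2.1]{kerodon}), $p'_{EO}$ is also surjective on simplices. This settles the case $n=0$ and gives a choice, for $n\geq 1$, of an $n$-simplex $\sigma'_n:\Delta^n\to\op{EOCov}(\op{St}_{\Ca})[R'^{-1}]$ lifting $\sigma_n$, whose vertices over $\X_i$ are equipped with atlases $(\X_i,x'_i:X'_i\to\X_i)$.

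Next I would adjust $\sigma'_n$ so that its boundary agrees with $\tau_n$. Write the vertices of $\tau_n$ as $(\X_i,x_i:X_i\to \X_i)$ and form the fiber products $(\X_i, x''_i: X''_i := X_i\times_{\X_i}X'_i\to\X_i)$, which again admit $\T$-local sections by \cref{covpullbandcomp}. The projection maps $X''_i\to X_i$ and $X''_i\to X'_i$ lie in $R'$ (they are refinements at each vertex in both the EO-directions), hence become equivalences in the localization. As in the proof of \cref{covislocalization}, this data assembles into a morphism
\[ f_n: \partial\Delta^n\times\Delta^1 \coprod_{\{0\}\times\partial\Delta^n}\{0\}\times\Delta^n \longrightarrow \op{EOCov}(\op{St}_{\Ca})[R'^{-1}] \]
whose restriction to $\{0\}\times\Delta^n$ is $\sigma'_n$, whose restriction to $\{1\}\times\partial\Delta^n$ has vertices $(\X_i,x''_i)$, and whose restriction to each $[k]\times\Delta^1$ is the refinement $X''_k\to X'_k$. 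Invoking the isofibration $\op{Fun}(\Delta^n,-)\to\op{Fun}(\partial\Delta^n,-)$ (\cite[Proposition 2.2.5]{land2021introduction}) together with \cite[Proposition 2.2.5]{land2021introduction}-style lifting, $f_n$ extends to $\Delta^n\times\Delta^1$, producing $\tau''_n:\Delta^n\to\op{EOCov}(\op{St}_{\Ca})[R'^{-1}]$ lifting $\sigma_n$ with boundary consisting of the atlases $(\X_i,x''_i)$. A symmetric isofibration argument using the refinements $X''_i\to X_i$ then modifies $\tau''_n$ so that its restriction to $\partial\Delta^n$ becomes $\tau_n$, yielding the desired dotted arrow.

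The main obstacle I anticipate is the bookkeeping required to set up the homotopy $f_n$ in the two-direction multisimplicial setting: an $n$-simplex of $\op{EOCov}(\op{St}_{\Ca})$ encodes a cube of \v Cech nerves over a cube of stacks that is simultaneously subject to the pullback condition in direction $F'$ and arbitrary in direction $\op{ALL}$, so one must verify that forming $X''_i := X_i\times_{\X_i}X'_i$ at each vertex, and then taking iterated fiber products, preserves membership in $\E$ (for the $F'$-direction) and compatibility of the pullback squares. Once this stability is verified using the fact that $\E'$ is stable under pullbacks and compositions and that fiber products of atlases admitting $\T$-local sections remain atlases admitting $\T$-local sections (\cref{covpullbandcomp} and the construction in \cref{nislocstproperty1}), the rest of the argument is a routine transcription of the proof of \cref{covislocalization}.
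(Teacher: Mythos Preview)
Your approach is different from the paper's and, while plausible in outline, carries a nontrivial gap you haven't addressed.

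The paper does \emph{not} rerun the trivial-fibration argument on the EO simplicial set. Instead it argues indirectly: by the same method as \cref{covislocalization}, the map $p'':\op{Fun}(\Delta^1,\op{Cov}(\op{St}_{\Ca}))[R^{-1}] \to \op{Fun}(\Delta^1,N^D_{\bb}(\op{St}_{\Ca}))$ is a categorical equivalence, hence admits a categorical inverse $q''$. One checks that $q''$ sends pullback squares to pullback squares, so it restricts to a map $q'_{EO}$ on the cartesian EO-subobjects. Then $p'_{EO}\circ q'_{EO}=\op{id}$ and $q'_{EO}\circ p'_{EO}$ is preisomorphic to the identity in the sense of \cite[23.4]{Rezk}, whence $p'_{EO}$ is a categorical equivalence by \cite[Lemma 23.8]{Rezk}. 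This avoids any direct lifting against $\partial\Delta^n\hookrightarrow\Delta^n$ in the multisimplicial setting.

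Your route tries to prove $p'_{EO}$ is a trivial fibration directly. The issue is that the isofibration lifting you invoke (\cite[Proposition 2.2.5]{land2021introduction} and the analogue of \cref{isofibrationfunclift}) applies to maps into an $\infty$-category. Here the target $\op{EOCov}(\op{St}_{\Ca})[R'^{-1}]$ is by definition the \emph{cartesian $2$-simplicial nerve} $\dd^*_{2,\{2\}}(-)^{\op{cart}}_{F',\op{ALL}}$ applied to the localized functor category; it is not a priori an $\infty$-category, nor is it a functor category to which the isofibration lemma directly applies. To make your argument go through you would have to first extend inside the ambient $\op{Fun}((\Delta^n)^{op}\times\Delta^n,\op{Fun}(\Delta^1,\op{Cov}(\op{St}_{\Ca})[R'^{-1}]))$ and then verify that the extension lands in the cartesian sub-simplicial-set, which is precisely the pullback-preservation check the paper isolates once and for all via $q''$. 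Also, your parenthetical that the anodyne map to the localization is ``bijective on simplices in each degree'' is false; it is only injective, which suffices for surjectivity of $p'_{EO}$ but should be stated correctly.

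In short: your plan can be made to work, but the paper's reduction to $p''$ plus the observation that the inverse preserves pullback squares is both shorter and sidesteps the isofibration issue you would otherwise have to resolve.
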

	\begin{proof}
		Following the arguments in the proof of \cref{thmmain}, we see that the map $p''$ is a categorical equivalence. Thus it admits a categorical inverse $q''$. It can be easily verified that $q''$ sends pullback squares to pullback squares. Thus the map $q''$ induces a map \[q'_{EO}: \op{EO} (\op{St}_{\Ca})  \to  \op{EOCov}(\op{St}_{\Ca})[R'^{-1}].\] 
		As $q'' \circ p'' = \op{id}_{\op{Fun}(\Delta^1,\op{St}_{\Ca})}$, we have $p'_{EO}\circ q'_{EO} = \op{id}_{\op{EO}(\op{St}_{\Ca})}$. On the other hand, we see that $q'_{EO} \circ p'_{EO}$ is preisomorphic to $\op{id}_{\op{EOCov}(\op{St}_{\Ca})[R'^{-1}]}$ in the sense of \cite[23.4]{Rezk}. Thus by \cite[Lemma 23.8]{Rezk}, we get that $p'_{EO}$ is a categorical equivalence. 
	\end{proof}
	We also prove another claim regarding the monomorphism \[i_{EO}: \op{EOCov}(\op{St}_{\Ca}) \hookrightarrow \op{EOCov}(\op{St}_{\Ca})[R'^{-1}]. \]
	\begin{claim}\label{localinthmmain2}
		Let  $\E$ be an $\infty$-category and let $F: \op{EOCov}(\op{St}_{\Ca}) \to \E$ be a functor which maps $R'$ to equivalences. Then $F$ extends to a functor $F': \op{EOCov}(\op{St}_{\Ca})[R'^{-1}] \to \E$. 
	\end{claim}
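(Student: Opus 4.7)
The plan is to verify that the inclusion $i_{EO}$ satisfies the universal extension property of a localization at $R'$, directly adapting the strategy from the proof of \cref{covislocalization}. By construction, $\op{EOCov}(\op{St}_{\Ca})[R'^{-1}]$ is obtained from $\op{EOCov}(\op{St}_{\Ca})$ by applying the simplicial functor $\dd^*_{2,\{2\}}\op{Fun}(\Delta^1,-)^{cart}_{F',\op{ALL}}$ to the inclusion $\op{Cov}(\op{St}_{\Ca}) \hookrightarrow \op{Cov}(\op{St}_{\Ca})[R^{-1}]$, which is anodyne with respect to $R$ by \cite[Proposition 6.3.2.1]{kerodon}. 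Given $F: \op{EOCov}(\op{St}_{\Ca}) \to \E$ sending $R'$ to equivalences, I want to produce $F'$ making the diagram
\begin{equation*}
\begin{tikzcd}
\op{EOCov}(\op{St}_{\Ca}) \arrow[r, "F"] \arrow[d, hookrightarrow, "i_{EO}"'] & \E \\
\op{EOCov}(\op{St}_{\Ca})[R'^{-1}] \arrow[ur, dotted, "F'"'] &
\end{tikzcd}
\end{equation*}
commute. I construct $F'$ by induction on the simplicial skeleton of the target, starting with $F' = F$ on $0$-simplices (where $i_{EO}$ is a bijection).

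For the inductive step on an $n$-simplex $\sigma'$ of $\op{EOCov}(\op{St}_{\Ca})[R'^{-1}]$, I use the analogue of the surjectivity claim established in the idea of the proof of \cref{eomapext}: there is an $n$-simplex $\sigma$ of $\op{EOCov}(\op{St}_{\Ca})$ together with an $R'$-refinement connecting $\sigma$ to $\sigma'$, obtained by choosing compatible atlases via iterated pullback (as in the cube lifting for $n=1$ spelled out before \cref{catequivext}). Since $F$ sends $R'$ to equivalences, a value for $F'(\sigma')$ extending the already-defined boundary can then be obtained from the right lifting property of the isofibration $\op{Fun}(\Delta^n,\E) \to \op{Fun}(\partial\Delta^n,\E)$ of \cite[Proposition 2.2.5]{land2021introduction}, in precisely the filling pattern used in the proof of \cref{covislocalization}.

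The main obstacle is coherence of these inductive fillings. First, each filling must respect the cartesian and $F'$/$\op{ALL}$ labeled-edge conditions that cut out $\op{EOCov}(\op{St}_{\Ca})[R'^{-1}]$ as a subsimplicial set of $\dd^*_{2,\{2\}}\op{Fun}(\Delta^1,\op{Cov}(\op{St}_{\Ca}))[R^{-1}]$; second, the simplices $\sigma$ chosen at successive inductive stages must glue to produce a genuine map of simplicial sets. Both points are handled by the same strategy as in \cref{covislocalization,catequivext}: each filling reduces to a lifting problem against an isofibration of mapping $\infty$-categories, and simultaneous compatible choices of atlases along pullbacks (as in the surjectivity claim above) provide all the needed cartesian cubes at once. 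The hypothesis that $F$ sends $R'$ to equivalences ensures that these restriction functors remain isofibrations at each stage, so the inductive system of lifts can be chosen consistently and assembles into the desired extension $F'$.
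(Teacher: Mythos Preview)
Your proposal is essentially the same as the paper's proof: both construct $F'$ by induction on skeleta, using the surjectivity of $p_{EO}$ to lift an $n$-simplex of $\op{EOCov}(\op{St}_{\Ca})[R'^{-1}]$ to one of $\op{EOCov}(\op{St}_{\Ca})$, and then invoking the isofibration $\op{Fun}(\Delta^n,\E)\to\op{Fun}(\partial\Delta^n,\E)$ together with the $R'$-equivalences (exactly as in \cref{covislocalization}) to produce the required filling. Your discussion of coherence is more explicit than the paper's, but the underlying mechanism and ingredients are identical.
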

	\begin{proof}
		We construct the functor $F'$ inductively. As objects of $\op{EOCov}(\op{St}_{\Ca})[R'^{-1}]$ are same as objects of $\op{EOCov}(\op{St}_{\Ca})$, we define $F'$ as $F$. \\
		Assume we have defined $F'$ upto $n-1$ simplices. Let $\sigma_n $ be an $n$-simplex of $\op{EOCov}(\op{St}_{\Ca})$. By induction, the boundary of $\sigma_n$ maps to a morphism $ F'(\partial(\sigma_n)): \partial\Delta^n \to \D$. \\
		We denote the image of $\sigma_n$ via $p'_{EO}$ by $\sigma'_n$. As $p_{EO}$ is surjective on each simplex, this lifts to an element $\sigma''_n: \Delta^n \to \op{EOCov}(\op{St}_{\Ca})$. Following the arguments of \cref{covislocalization}, we get a morphism  \[ \tau_n: \partial\Delta^n \times \Delta^1 \coprod_{\partial\Delta^n \times\{0\}} \Delta^n \to \D \]
		where 
		\begin{enumerate}
			\item $\tau_n|_{[0] \times \Delta^n} = F(\sigma''_n)$.
			\item $\tau_n|_{[1] \times \partial\Delta^n} = F'(\partial(\sigma_n)) $, and
			\item $\tau_n|_{\Delta^1 \times [k]} $ is an equivalence for all $ 0 \le k \le n$. 
		\end{enumerate}
		As explained in \cref{covislocalization}, this morphism extends to a morphism  \[ \tau'_n: \Delta^n \times \Delta^1 \to 
		\D. \] The morphism $\tau'_n$ when restricted to $\Delta^n \times [1]$ gives us the morphism 
		\[ F'(\sigma_n):\Delta^n \to \D. \] This completes the proof of induction and hence the claim.
	\end{proof}
	\begin{proof}[Proof of \cref{thmmain2}]
		We define a morphism 
		\begin{equation}
			\varphi: \op{EOCov}(\op{St}_{\Ca})
			\xrightarrow{\op{EO}(\D^{\otimes})} \op{Fun}(N(\Delta),\op{Mod}(\op{Pr}^L_{stb}))  \xrightarrow{\op{res}_{[-1]} \circ i} \op{Mod}(\op{Pr}^L_{stb})
		\end{equation}
		as follows:
		\begin{enumerate}
			\item Note that we have a canonical morphism 
			\[ \op{EOCov}(\op{St}_{\Ca}) \to \op{Fun}(N(\Delta),\op{EO}(\Ca)). \]
			The morphism $\op{EO}(\D^{\otimes})$ is the functor $\op{EO}(\D^{\otimes})$ applied to $\op{Fun}(N(\Delta),\op{EO}(\Ca))$.  
			\item The maps $i$ and $\op{res}_{[-1] \times \Delta^n}$  are the same maps that we defined in the construction of $\phi$ in \cref{thmmain}. The functorial association of limit is possible as $\op{Mod}(\op{Pr}^L_{stb})$ admits small limits (\cite[Proposition 3.2.2.1]{HA}). 
		\end{enumerate}
		
		Similar to the arguments in \cref{thmmain}, we see that the morphism $\varphi$ sends $R$ to equivalences. By \cref{localinthmmain2}, this induces a map \[\varphi': \op{EOCov}(\op{St}_{\Ca})[R^{-1}] \to \op{Mod}(\op{Pr}^L_{stb}). \]
		By \cref{catequivext}, this induces a morphism  
		\[ \op{EO}(\D^{\otimes}_{ext}): \op{EO}(\op{St}_{\Ca})  \to \op{Mod}(\op{Pr}^L_{stb}). \]
		It is automatically clear that this an extension of the morphism $\op{EO}(\D^{\otimes})$. This completes the proof.
	\end{proof}
	
	\begin{remark}
		The functor $EO(\D^{\otimes}_{ext})$ when restricted to direction ALL is indeed the functor $\D'^{\otimes}_{ext}$ obtained by applying \cref{thmmain} to functor $D'^{\otimes}$). 
	\end{remark}

	To conclude the proof of \cref{thmmain2}, we apply \cref{eomapext} to the functor $\op{EO}(\Sho)$ and $F'$ to be collection of morphisms which are representable, separated and finite type. We first extend the functor to the category of algebraic spaces. Then, we apply it again to extend it to the $(2,1)$-category $\op{Nis-locSt}$.\\
	
	Thus this gives us the enhanced operation map 
	\begin{equation}
		\op{EO}(\Shext): \dd^*_{2,\{2\}}\op{Fun}(\Delta^1,N^D_{\bb}(\op{Nis-locSt}))^{cart}_{F',\op{ALL}} \to \op{Mod}(\op{Pr}^L_{stb}).
	\end{equation} 
	
	The composition 
	\begin{equation}
		\Nst^{op} \xrightarrow{\op{dir}_{ALL} \circ(\X \to (\X \to \Sp \mathbf{Z}))} \dd^*_{2,\{2\}}\op{Fun}(\Delta^1,N^D_{\bb}(\op{Nis-locSt}))^{cart}_{F',\op{ALL}} \xrightarrow{\op{EO}(\Shext)} \op{Mod}(\op{Pr}^L_{stb})
	\end{equation}
	sends $\X$ to $(\Shext(\mathbf{Z}), \SHe(\X))$ and it sends morphisms $f: \X \to \Y$ to the pair of morphisms $ (\op{id},f^*): (\Shext(\mathbf{Z}),\SHe(\Y)) \to (\Shext(\mathbf{Z}),\SHe(\X))$. \\
	
	As explained in \cref{eopushprojbas}, the functor $\op{EO}(\Shext)$ induced by restricting along direction $F'$ gives us the functor 
	
	\begin{equation}
		\Sheshr: N^D_{\bb}(\op{Nis-locSt}') \to \op{Pr}^L_{stb}.
	\end{equation} 
	which extends $\SH_!$.\\
	
	The projection and base change formulas also follow from $\op{EO}(\Shext)$ as explained in  beginning of \cref{proofofthmmain2} and \cref{eopushprojbas}. This completes the proof of \cref{thmmain2}. \\
	
		\section{Six operations for $\Shext(\X)$}
	
	In the previous two sections, we have extended the motivic homotopy functor from schemes to algebraic stacks and constructed the six functors. In this section, we prove other relations of six operations: homotopy invariance, localization and purity. \\
	
	In the first subsection, we state results of smooth and proper base change theorems in our context. In the second subsection, we prove the theorems of localization and homotopy invariance. In the third subsection, we construct the natural transformation $\alpha_f$. In the fourth subsection, we construction the purity transformation $\rho_f$. In the last subsection, we summarize all the results and state in a single theorem.

	\subsection{Smooth and proper base change.}
	
	We prove smooth and proper base change theorems. \cref{thmmain2} constructs the lower shriek functors for representable morphisms and separated of finite type, in particular for open immersions and proper morphisms. On the level of schemes, for a smooth morphism $f:X \to Y$, the pullback morphism $f^*: \SH(Y) \to \SH(X)$ admits a left adjoint $f_{\#}$. It is natural to expect such a result in the context of $\Shext$ of $\op{Nis-locSt}$. 
	
	\begin{lemma}\label{pullbackisconservative}
		Let $\X \in \op{Nis-locSt}$ and $x: X \to \X$ be a smooth atlas which admits Nisnevich-local sections. Then the pullback map $x^*: \SHe(\X) \to \SHe(X)$ is conservative. 
	\end{lemma}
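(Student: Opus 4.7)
The plan is to exploit the limit description of $\SHe(\X)$ from \cref{Shstckdef} together with the elementary fact that equivalences in a limit of $\infty$-categories are detected pointwise.

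First, I would recall that since $x : X \to \X$ is an atlas admitting Nisnevich-local sections, \cref{Shstckdef} (with the Čech nerve $X^{+}_{\bb,x}$) gives
\[
\SHe(\X) \;\cong\; \lim_{[n] \in \Delta}\SHe(X^n_{\X}),
\]
where $X^n_{\X}$ denotes the $(n+1)$-fold fiber product of $X$ over $\X$ and the transition maps are the pullback functors. Let $p_n : X^n_{\X} \to \X$ be the canonical projection; the functor to the $n$-th factor of the limit is exactly $p_n^* : \SHe(\X) \to \SHe(X^n_{\X})$, and the degree-zero evaluation is $p_0^* = x^*$.

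Next I would invoke the standard fact that if $\Ca \cong \lim_{i} \Ca_i$ in $\op{Cat}_\infty$ (or in $\op{Pr}^L_{stb}$), the family of evaluation functors $\{\op{ev}_i : \Ca \to \Ca_i\}$ is jointly conservative: a morphism $f$ in $\Ca$ is an equivalence iff each $\op{ev}_i(f)$ is an equivalence. Applied to our situation, $f$ in $\SHe(\X)$ is an equivalence iff $p_n^*(f)$ is an equivalence in $\SHe(X^n_{\X})$ for every $n \ge 0$.

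Finally, I would observe that for every $n \ge 0$ the projection $p_n$ factors as $x \circ \pi_n$ where $\pi_n : X^n_{\X} \to X$ is any of the face projections of the Čech nerve. Hence $p_n^* \cong \pi_n^* \circ x^*$. Assuming $x^*(f)$ is an equivalence, each $p_n^*(f) = \pi_n^*(x^*(f))$ is also an equivalence since $\pi_n^*$, being a functor, preserves equivalences. By the joint conservativity above, $f$ itself is an equivalence in $\SHe(\X)$, proving that $x^*$ is conservative. There is no serious obstacle here; the only point of care is citing (or briefly justifying) the joint conservativity of evaluation functors out of a limit in $\op{Pr}^L_{stb}$, which follows from the model of such limits as homotopy-coherent sections.
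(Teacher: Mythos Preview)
Your proof is correct and follows essentially the same strategy as the paper. The paper's proof is a one-line citation of \cite[Proposition 4.7.5.1]{HA} applied to the cosimplicial diagram $[n]\mapsto \SHe(X^n_{\X})$; that proposition packages precisely the argument you wrote out---that evaluation functors from a limit of $\infty$-categories are jointly conservative, and that in a cosimplicial diagram every evaluation factors through the $[0]$-th one via the face maps, so conservativity of the whole family reduces to conservativity of $x^*$. Your version has the minor advantage of being self-contained rather than invoking a black box from Lurie.
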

	\begin{proof}
		This is a consequence of \cite[Proposition 4.7.5.1]{HA} applied to the $\J = N(\Delta_+)$ and $ q: N(\Delta) \to \widehat{\op{Cat}_{\infty}}$ which is the functor $\Shext$ applied to simplicial object $X^{\bb}_{x}$. We get that the functor $G:=x^*: \SHe(\X) \to \SHe(X)$ is conservative.
	\end{proof}
	
	\begin{proposition}(Smooth base change)\label{smoothbasechangeext}
		Let $ f: \Y \to \X$ be a  representable smooth morphism in $\op{Nis-locSt}$. Then $f^*$ admits a left adjoint $f_{\#}$. Morever for a cartesian square in $\op{Nis-locSt}$ of the form 
		\begin{equation}
			\begin{tikzcd}
				\X' \arrow[r,"f'"] \arrow[d,"g'"] & \Y' \arrow[d,"g"] \\
				\X \arrow[r,"f"] & \Y
			\end{tikzcd}
		\end{equation} 
		where $f$ (and thus $f'$) is smooth, we have an equivalence 
		\[ \op{Ex}(\Delta^*_{\#}):f^*g_{\#} \cong g'_{\#}f'^*.  \]
	\end{proposition}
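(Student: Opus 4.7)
The plan is to construct $f_\#$ by descent and then deduce the base change formula from its construction combined with smooth base change for schemes. More precisely, since $f : \Y \to \X$ is representable, I would first choose an atlas $x : X \to \X$ admitting Nisnevich-local sections and form the \v{C}ech nerve $X_\bb^+$. Pulling back along $f$ yields a compatible atlas $x' : Y \to \Y$ (with $Y = X \times_\X \Y$) whose \v{C}ech nerve $Y_\bb^+ = X_\bb^+ \times_\X \Y$ covers $\Y$, and at each simplicial level the map $f_n : Y_n \to X_n$ is a smooth representable morphism between algebraic spaces. Proceeding in the same two-step manner as in the construction of $\Shext$ (first extending from schemes to algebraic spaces, then to stacks), I can therefore reduce to the case where each $f_n$ is a smooth morphism of schemes, for which $(f_n)^*$ admits a left adjoint $(f_n)_\#$ and smooth base change \cref{8.3} holds by \cref{mainnot}.

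To pass from these levelwise adjoints to a global adjoint $f_\# : \SHe(\Y) \to \SHe(\X)$, the key input is that the transition maps in the \v{C}ech nerves are themselves smooth pullbacks, so the levelwise smooth base change on schemes precisely says that the $(f_n)_\#$ intertwine the transition functors. Phrased in the language of $\op{Pr}^L_{stb}$, this provides a morphism of cosimplicial diagrams of left adjoints compatible with the cosimplicial diagram of right adjoints $(f_n)^*$ that assembles to $f^*$. One can then invoke the $\infty$-categorical Beck-Chevalley/passage-to-the-limit principle (e.g.\ the dual of \cite[Corollary 4.7.4.18]{HA}) to obtain $f_\#$ as the desired left adjoint of $f^* = \lim (f_n)^*$; the unit and counit are obtained as limits of the levelwise units and counits, and conservativity of the pullback to $X$ (\cref{pullbackisconservative}) guarantees that the resulting transformations are genuine unit and counit of an adjunction.

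For the base change assertion, given the cartesian square involving $f, f', g, g'$ with $f$ smooth, I would choose a compatible family of atlases: an atlas of $\Y$ by a scheme, its pullback along $g$ to obtain an atlas of $\Y'$, and similarly for $\X$ and $\X'$ via $f$ and $f'$. By \cref{covpullbandcomp} all of these are atlases admitting Nisnevich-local sections, and the induced map of \v{C}ech nerves fits into a levelwise cartesian square of smooth morphisms of schemes to which smooth base change on schemes applies. The natural transformation $\op{Ex}(\Delta^*_\#) : f^* g_\# \to g'_\# f'^*$ is obtained by passing these levelwise equivalences to the limit; since each level is an equivalence and limits in $\op{Pr}^L_{stb}$ preserve such pointwise equivalences, the total transformation is an equivalence as well.

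The main obstacle will be the coherent $\infty$-categorical assembly: one must ensure that the levelwise left adjoints and base change equivalences are not just pointwise data but fit into a morphism of cosimplicial objects in $\op{Pr}^L_{stb}$, so that they descend to a bona fide adjunction on the limit. I expect this is cleanest to package, following the pattern of \cref{eomapext}, by extending the scheme-level enhanced operation map to include the $\#$-direction for smooth morphisms and then invoking descent along atlases admitting Nisnevich-local sections, so that both the existence of $f_\#$ and the base change equivalence fall out of a single extension-by-descent argument rather than being verified by hand.
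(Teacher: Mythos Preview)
Your approach is essentially the same as the paper's: choose an atlas of $\X$, pull it back along $f$ to get a compatible atlas of $\Y$, and use levelwise smooth base change on schemes together with \cite[Corollary 4.7.4.18]{HA} (closure of $\op{Fun}^{\mathrm{LAd}}(\Delta^1,\widehat{\op{Cat}_\infty})$ under limits) to produce $f_\#$ on the limit; for the base change square the paper similarly builds a cube of \v{C}ech nerves and invokes \cite[Lemma 4.3.7]{liu2017enhanced} to conclude left adjointability of the limit square.

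Two small remarks. First, your appeal to conservativity of $x^*$ to certify the unit/counit is unnecessary: the whole point of \cite[Corollary 4.7.4.18]{HA} is that the limit of a diagram in $\op{Fun}^{\mathrm{LAd}}$ is computed in $\op{Fun}$ and lands again in $\op{Fun}^{\mathrm{LAd}}$, so the adjunction on the limit comes for free. Second, your closing suggestion to package everything via an extension of the enhanced operation map is \emph{not} the route the paper takes here; the paper handles $f_\#$ and smooth base change directly via adjointable squares, reserving the enhanced operation map machinery for $f_!$, $f^!$, projection formula, and proper base change. Your proposed alternative would work, but it is more machinery than is needed for this statement.
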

	
	\begin{proof}
		The proof of the proposition uses the theory of left and right adjointable squares and \ref{adjsquarethm}. Let $x: X \to \X$ be an atlas admitting Nisnevich-local sections. Then $y: X \times_{\X} \Y \to \Y$ is an atlas admitting $\T$-local sections. Taking \v{C}ech nerves of $x$ and $y$, produces a morphism 
		\[ H: N(\Delta)^{op} \times \Delta^1 \to N^D_{\bb}(\op{St}_{\Ca}) \] where $H|_{[0] \times \Delta^1}= f'$ (where $f'$ is base change of $f$ along $x$), $H|_{N(\Delta_+)^{op} \times [0]} = X^{\bb}_{x}$ and $H|_{N(\Delta_+)^{op} \times [1]} = Y^{\bb}_{y}$. \\
		Composing with the functor $\SHe: N^D_{\bb}(\op{St}_{\Ca})^{op} \to \op{Pr}^L_{stb} \hookrightarrow \widehat{\op{Cat}_{\infty}}$, we get a functor 
		\[ H: N(\Delta) \to \op{Fun}(\Delta^1,\widehat{\op{Cat}_{\infty}}) \]
		As $f_{\#}$ is left adjoint of $f^*$ on the level of schemes and we have smooth base change (see \cite[Example 9.4.8]{Robalothesis}), this implies that the functor $H$ can be realized as functor:
		\[ H: N(\Delta) \to \op{Fun}^{LAd}(\Delta^1,\widehat{\op{Cat}_{\infty}}) \] where $Fun^{LAd}(\Delta^1,\widehat{\op{Cat}_{\infty}})$ is the $\infty$-category of left adjointable functors (\cite[Definition 4.7.4.13]{HA}). As $\op{Fun}^{LAd}(\Delta^1,\widehat{\op{Cat}_{\infty}})$ admits small limits (\cite[Corollary 4.7.4.18]{HA}), the map $H$ admits a limit \[\overline{H}: N(\Delta_+)^{op} \to \op{Fun}^{LAd}(\Delta^1,\widehat{\op{Cat}_{\infty}}). \] Evaluating $H'$ at $[-1]$, we get the morphism \[ f^*: \SHe(\Y) \to \SHe(\X) \] which is an element of $\op{Fun}^{LAd}(\Delta^1,\widehat{\op{Cat}_{\infty}})$. By definition of  $\infty$-category of left adjointable functors, we get that $f^*$ admits a left adjoint \[ f_{\#}: \SHe(\X) \to \SHe(\Y). \]
		
		It remains to prove the smooth base change. Let us denote the cartesian square in the proposition as a morphism $\sigma: \Delta^1 \times \Delta^1 \to N^D_{\bb}(\op{St}_{\Ca})$ Let $y: Y \to \Y$ be an atlas admitting $\T$-local sections. By \cref{covnonempty} and the fact that pullback of  representable smooth morphism is smooth, we get a morphism 
		\[ G': N(\Delta_+)^{op} \times \Delta^1 \times \Delta^1 \to \Ca \] such that
		\begin{enumerate}
			\item $ G'|_{[-1] \times \Delta^1 \times \Delta^1} = \sigma$.
			\item $G'_{N(\Delta_+)^{op} \times [k] \times [j]}$ is \v{C}ech nerve of an atlas of $\sigma([k],[j])$ admitting Nisnevich-local sections for all $0 \le j,k \le 1$. 
		\end{enumerate}
		Composing with $\SH(-)$, we get a functor 
		\[G: N(\Delta) \times \Delta^1 \times \Delta^1  \to \widehat{\op{Cat}_{\infty}}.\]
		For every $\tau: \Delta^1 \to N(\Delta) \times \Delta^1$, the induced square \[ G \circ(\tau \times \Delta^1): \Delta^1 \times \Delta^1 \to \widehat{\op{Cat}_{\infty}} \] is left adjointable by smooth base change theorem on the level of schemes (\cite[Example 9.4.8]{Robalothesis}). Applying \cite[Lemma 4.3.7]{liu2017enhanced} to the functor $G$, we get that the square 
		\begin{equation}
			\begin{tikzcd}
				\SHe(\Y) \arrow[r,"f^*"] \arrow[d,"g^*"] & \SHe(\X) \arrow[d,"g'^*"] \\
				\SHe(\Y') \arrow[r,"f'^*"] & \SHe(\X')
			\end{tikzcd}
		\end{equation}
		is the left adjointable, i.e.. we have 
		\[ \op{Ex}(\Delta^*_{\#}): g'_{\#}f'^* \cong f^*g_{\#}. \]
	\end{proof}
	\begin{remark}\label{lowershriekforopenproper}
		The lower shriek functor on the level of schemes agrees with $(-)_{\#}$ for open immersions and $(-)_*$ along proper morphisms (\cite[Theorem 9.4.8]{Robalothesis}). As the lower shriek functor on the level of algebraic stacks is constructed by taking limit along \v{C}ech covers of atlases, we get that for an open immersion $j: \X' \to \X$, we have an equivalence $j_! \cong j_{\#}$ and for  representable proper morphisms $p: \X'' \to \X$, we have an equivalence $p_! \cong p_*$.
	\end{remark}
	A similar proposition holds in the case of representable proper morphisms.
	\begin{proposition}[Proper base change]
		Given a cartesian square in $\op{Nis-locSt}$ of the form 
		\begin{equation}
			\begin{tikzcd}
				\X' \arrow[r,"f'"] \arrow[d,"g'"] & \Y' \arrow[d,"g"] \\
				\X \arrow[r,"f"] & \Y
			\end{tikzcd}
		\end{equation} 
		where $g$ (and thus $g'$) is representable and proper, we have an equivalence 
		\[ \op{Ex}(\Delta^*_*):f^*g_* \cong g'_*f'^*.  \]
	\end{proposition}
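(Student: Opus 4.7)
The plan is to deduce this proposition as a direct consequence of \cref{thmmain2} combined with \cref{lowershriekforopenproper}. A representable proper morphism in $\op{Nis-locSt}$ is in particular separated of finite type and representable by algebraic spaces, so both $g$ and $g'$ lie in the class of morphisms $\op{Nis-locSt}'$ for which the lower shriek functor is defined by \cref{thmmain2}. Applying the base change equivalence $\op{Ex}(\Delta^*_{\#})$ of \cref{thmmain2} to the given cartesian square (with the vertical arrows $g,g'$ playing the role of the ``lower shriek'' direction and the horizontal arrows $f, f'$ playing the role of the ``upper star'' direction), we obtain
\[ f^* \circ g_! \cong g'_! \circ f'^*. \]
By \cref{lowershriekforopenproper}, for a representable proper morphism $p$ there is a canonical equivalence $p_! \cong p_*$. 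Substituting $g_! \cong g_*$ and $g'_! \cong g'_*$ yields the desired equivalence.

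Alternatively, one can give a proof parallel to \cref{smoothbasechangeext}. First, note that for a representable proper morphism $g$, the pullback functor $g^*$ admits a right adjoint $g_*$ by the adjoint functor theorem, since $g^*$ preserves colimits. Choose atlases admitting Nisnevich-local sections of $\Y$ and $\X$, and use \cref{covnonempty} to produce a morphism
\[ G' : N(\Delta_+)^{op} \times \Delta^1 \times \Delta^1 \to N^D_{\bb}(\op{Nis-locSt}) \]
restricting to the given square at $[-1]$ and to \v{C}ech nerves of atlases admitting Nisnevich-local sections at each vertex. Composing with $\SHe(-)$, we obtain $G : N(\Delta) \times \Delta^1 \times \Delta^1 \to \widehat{\op{Cat}_{\infty}}$. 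Scheme-level proper base change (hypothesis (2b) of \cref{mainnot}, satisfied by $\Sho$) guarantees that for every edge $\tau : \Delta^1 \to N(\Delta) \times \Delta^1$, the square $G \circ (\tau \times \Delta^1)$ is horizontally right adjointable. Invoking \cite[Lemma 4.3.7]{liu2017enhanced} then transports this right adjointability to the limit square over $N(\Delta)$, which is precisely the square of stacks, yielding $\op{Ex}(\Delta^*_*) : f^* g_* \cong g'_* f'^*$.

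The main obstacle in either approach is bookkeeping rather than mathematical depth. In the first approach, one must check that the indexing conventions of \cref{thmmain2} admit the transposition interchanging the ``shriek'' axis and the ``star'' axis; this is immediate because the statement of \cref{thmmain2}(1) does not privilege horizontal versus vertical arrows. In the second approach, the delicate point is that right adjointability at the scheme level propagates through the limit of a $\widehat{\op{Cat}_{\infty}}$-valued diagram, which is precisely what \cite[Lemma 4.3.7]{liu2017enhanced} is designed to address. I would favor the first approach, which requires essentially no new work beyond observing the identification $g_! \cong g_*$ for representable proper morphisms already recorded in \cref{lowershriekforopenproper}.
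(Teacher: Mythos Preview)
Your first approach is exactly the paper's proof: identify $g_!\cong g_*$ and $g'_!\cong g'_*$ for representable proper morphisms via \cref{lowershriekforopenproper}, so that the desired equivalence is a special case of the base change statement in \cref{thmmain2}. The paper does not pursue your alternative route through \cite[Lemma 4.3.7]{liu2017enhanced}, though that would also work.
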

	\begin{proof}
		As $f_* \cong f_!$ when $f$ is representable and proper, the proper base change is indeed the base change with respect to lower shriek functor. This holds due to \cref{thmmain2}.
	\end{proof}
	
	\subsection{Localization and homotopy invariance.}
	
\begin{proposition}(Localization)\label{localizationext} If $i: \Z \to \X$ is a closed immersion with complementary open immersion $j: U:=X-Z \hookrightarrow \X$, we have the cofiber sequences:
		\begin{enumerate}
			\item \begin{equation}
				j_!j^! \to \op{id} \to i_*i^* 
			\end{equation}
			\item \begin{equation}
				i_!i^! \to  \op{id} \to j_*j^*
			\end{equation}
		\end{enumerate}
	\end{proposition}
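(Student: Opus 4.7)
The plan is to reduce the statement to the known localization theorem for schemes by working with a smooth atlas $x: X \to \X$ admitting Nisnevich-local sections, which we may take to be schematic since $\X \in \op{Nis-locSt}$. Write $i': \Z' := \Z \times_{\X} X \hookrightarrow X$ and $j': U' := U \times_{\X} X \hookrightarrow X$ for the pullbacks; then $i'$ is a closed immersion of schemes with complementary open immersion $j'$, and the classical localization theorem for $\op{SH}$ on schemes (\cite{Ay1,Ay2}) gives the two cofiber sequences $j'_! j'^! \to \op{id}_{\SH(X)} \to i'_* i'^*$ and $i'_! i'^! \to \op{id}_{\SH(X)} \to j'_* j'^*$ on $\SH(X)$.

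Next, the counit $j_! j^! \to \op{id}$ of the adjunction $(j_!, j^!)$ and the unit $\op{id} \to i_* i^*$ of $(i^*, i_*)$ are defined globally on $\SHe(\X)$, so the sequence $j_! j^! \to \op{id} \to i_* i^*$ exists as a sequence of natural transformations of endofunctors of $\SHe(\X)$. To verify it is a cofiber sequence, I would apply the pullback functor $x^*: \SHe(\X) \to \SHe(X)$, which is conservative by \cref{pullbackisconservative} and exact as a left adjoint to $x_*$ between stable $\infty$-categories. Base change (\cref{thmmain2}, applicable since $j$ and $i$ are representable, separated and of finite type) provides equivalences $x^* \circ j_! \cong j'_! \circ x'^*$ and $x^* \circ i_* \cong i'_* \circ x'^*$, the latter using $i_* \cong i_!$ for representable proper morphisms (\cref{lowershriekforopenproper}); the dual equivalences $j^! \circ x^* \cong x'^* \circ j'^!$ and $i^* \circ x^* \cong x'^* \circ i'^*$ follow by passing to right adjoints. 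Consequently, the pullback of the candidate sequence under $x^*$ is the schematic localization sequence on $\SH(X)$, which is a cofiber sequence. Since $x^*$ is conservative and exact, the original sequence is a cofiber sequence on $\SHe(\X)$. The second sequence $i_! i^! \to \op{id} \to j_* j^*$ is handled symmetrically, with base change for $i_!$ from \cref{thmmain2} and for $j_*, j^*$ via adjunction.

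The main obstacle is bookkeeping: one must check that the unit and counit morphisms on the stack really pull back under $x^*$ to the schematic unit and counit, so that $x^*$ applied to the candidate sequence recovers the schematic localization triangle on the nose rather than merely some triangle fitting the same shape. This compatibility is built into the construction: both sides of the adjunctions are obtained from the limit formalism of \cref{thmmain} and from the enhanced operation map $\op{EO}(\Shext)$ of \cref{thmmain2}, which by construction extends $\op{EO}(\Sho)$ along the Čech nerve of $x$. In particular the base change equivalences above are the ones encoded by $\op{EO}(\Shext)$, so the identification of $x^*(j_! j^! \to \op{id} \to i_* i^*)$ with $(j'_! j'^! \to \op{id} \to i'_* i'^*)$ amounts to unwinding the levelwise description of these natural transformations on the Čech nerve.
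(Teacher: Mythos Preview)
Your proposal is correct and takes a genuinely different route from the paper. The paper argues levelwise on the full \v{C}ech nerve: it writes an object $E \in \SHe(\X)$ as a compatible system $(E_n)_{n\in\Delta}$, invokes the schematic localization fiber sequence $j_{n\#}j_n^*(E_n)\to E_n\to i_{n*}i_n^*(E_n)$ at each level, packages these as a diagram $\Lambda^2_0 \to \op{Fun}(N(\Delta_{s+}),\widehat{\op{Cat}_{\infty}})$, and then takes the limit (pullback) to obtain the square at level $[-1]$; the identification of the corners with $j_{\#}j^*(E)$ and $i_*i^*(E)$ uses that smooth pullback commutes with $(-)_{\#}$ and $(-)_*$. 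You instead construct the candidate triangle directly on $\SHe(\X)$ from the global unit and counit and verify it is a cofiber sequence by applying the exact conservative functor $x^*$ along a single atlas and invoking schematic localization once. Your argument is closer in spirit to how the paper itself handles homotopy invariance (\cref{hominvext}) and purity (\cref{purityext}), and it avoids the limit-of-squares bookkeeping; the paper's approach, on the other hand, makes the passage from levelwise data to the global statement more explicit.

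One small point you leave implicit: to produce the comparison map from $\op{cofib}(j_!j^!\to\op{id})$ to $i_*i^*$, you need a nullhomotopy of the composite $j_!j^!\to i_*i^*$. This follows from base change (\cref{thmmain2}) applied to the cartesian square with $\U\times_{\X}\Z=\emptyset$, which gives $i^*j_!\simeq 0$ and hence the composite is adjoint to the zero map; once this is in place, the compatibility of that nullhomotopy with the schematic one under $x^*$ is exactly the ``bookkeeping'' you describe.
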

	\begin{proof}
		We prove the existence of the first cofiber sequence. The second cofiber sequence is dual to the first one. Let $x: X \to \X$ be an atlas admitting Nisnevich-local sections. \\
		Then the restriction of $x$ to $\Z$ and $\U$ defines atlases $z:Z \to \Z$ and $u:U \to \U$ and these induce morphism of the \v{C}ech nerves $Z^{+}_{\bb,z} \to X^{+}_{\bb,x}$ and $U^{+}_{\bb,u} \to X^{+}_{\bb,x}$. Thus we have morphisms $Z^n_{\Z} \hookrightarrow X^n_{\X}$ and $U^n_{\U}  \hookrightarrow X^n_{\X}$ which are closed and open immersions respectively for every $n$.\\
		Let $E \in \SHe(\X)$, then $E=(E_n)_{n \in \Delta}$ where $E_n \in \SH(X^n_{\X})$.
		
		For any $n$, we have a square which is a fiber sequence 
		\begin{equation}
			\begin{tikzcd}
				j_{n\#}j_n^*(E_n) \arrow[r]\arrow[d] & E_n \arrow[d] \\
				0 \arrow[r] & i_{n*}i_n^*(E_n)
			\end{tikzcd}
		\end{equation} 
		in $\SH(X^n_{\X})$, because localization holds for schemes (\cite[Theorem 9.4.25]{Robalothesis}). \\
		
		This can be visualized as a limit map
		\[ \overline{H_n}: {\Lambda^2_0}^{\triangleleft} \to \op{Cat}_{\infty}\]
		where \[ H_n: \Lambda^2_0 \to \widehat{\op{Cat}_{\infty}} \]
		Also, we have a morphism $H_{-1}: \Lambda^2_0 \to \widehat{\op{Cat}_{\infty}}$ given by the diagram 
		\begin{equation}
			\begin{tikzcd}
				{} & E \arrow[d]\\
				0 \arrow[r] & i_*i^*E
			\end{tikzcd}
		\end{equation}
		The collection of maps $H_n$ for every $n \ge -1$ induces a morphism
		\[ H: \Lambda^2_0 \to \op{Fun}(N(\Delta_{s+}),\widehat{\op{Cat}_{\infty}}) \]
		where for every $[n] \in \Delta_{s+}$, $H|_{[n]}:= H_n$. 
		As $\op{Fun}(N(\Delta_{s+}),\widehat{\op{Cat}_{\infty}})$ admits all limits, there exists an extension of $H$ to \[ \overline{H}:\Delta^1 \times \Delta^1 \cong \Lambda^{2\triangleleft}_0  \to \op{Fun}(N(\Delta_{s+}),\widehat{\op{Cat}_{\infty}}). \]
		The morphism $\overline{H}$ is evaluated at $[-1] \in \Delta_{s+}$ gives us a square  \[ \overline{H}|_{[-1]}: \Delta^1 \times \Delta^1 \to \widehat{\op{Cat}_{\infty}}. \]
		As smooth pullbacks commute with $(-)_{\#},(-)_*$, we have $j_{\#}j^*(E) \cong (j_{n\#}j_n^*(E_n))_{n \in \Delta}$ and $i_{*}i^*(E) \cong (i_{n*}I_n^*(E_n))_{n \in \Delta}$. Thus the morphism $H'|_{[-1]}$ is the pullback square 
		\begin{equation}
			\begin{tikzcd}
				j_{\#}j^*(E) \arrow[r] \arrow[d] & E \arrow[d] \\
				0 \arrow[r] & i_*i^*(E).
			\end{tikzcd}
		\end{equation}
		This proves the localization theorem.\\
		The same argument works for the dual sequence $2$.
	\end{proof}
	\begin{proposition}\label{hominvext}(Homotopy invariance) For any stack $\X \in \op{Nis-locSt}$, the projection $\pi: \A^1_{\X} \to \X$ induces a fully faithful functor $\pi^*(-): \SHe(\X) \to \SHe(\A^1_{\X})$
	\end{proposition}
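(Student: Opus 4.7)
The plan is to reduce to the known $\A^1$-homotopy invariance for schemes by expressing $\pi^*$ as a limit of the scheme-level pullback functors and exploiting the fact that a limit of fully faithful functors is fully faithful.

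First I would choose a smooth atlas $x: X \to \X$ admitting Nisnevich-local sections. Its base change $\tilde{x}: \A^1_X \to \A^1_{\X}$ along $\pi$ is a pullback of an atlas, and hence by the results of Section 2 it is an atlas of $\A^1_{\X}$ admitting Nisnevich-local sections. At each simplicial level the \v{C}ech nerves are related by $\A^1_{X^n_{\X}} \cong (\A^1_X)^n_{\A^1_{\X}}$, and the map $\pi$ restricts to the projection $\pi_n : \A^1_{X^n_{\X}} \to X^n_{\X}$. By \cref{Shstckdef} we obtain
\[
\SHe(\X) \cong \varprojlim_{n} \SH(X^n_{\X}), \qquad \SHe(\A^1_{\X}) \cong \varprojlim_{n} \SH(\A^1_{X^n_{\X}}),
\]
and the functor $\pi^*$ is identified with $\varprojlim_n \pi_n^*$.

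Next, following the technique used in the proof of \cref{smoothbasechangeext}, I would promote the compatible family $\{\pi_n^*\}_{n \in \Delta}$ to a single functor $H : N(\Delta) \to \op{Fun}(\Delta^1, \widehat{\op{Cat}_{\infty}})$ obtained by applying $\SHe$ to the map $\A^1_{X^{\bb}_{\bb,x}} \to X^{\bb}_{\bb,x}$ of \v{C}ech nerves. Each pointwise value $\pi_n^*$ is fully faithful by the classical $\A^1$-homotopy invariance theorem for schemes (see \cite[Example 9.4.8]{Robalothesis}). Now I would invoke the standard fact that the full subcategory $\op{Fun}^{\op{ff}}(\Delta^1, \widehat{\op{Cat}_{\infty}}) \subset \op{Fun}(\Delta^1, \widehat{\op{Cat}_{\infty}})$ spanned by fully faithful functors is stable under small limits; this follows because mapping spaces in a limit of $\infty$-categories are limits of mapping spaces, and a limit of equivalences of spaces is an equivalence.

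Concretely, for $E, E' \in \SHe(\X)$ with components $(E_n)$ and $(E'_n)$, one has
\[
\op{Map}_{\SHe(\A^1_{\X})}(\pi^* E, \pi^* E') \cong \varprojlim_{n} \op{Map}_{\SH(\A^1_{X^n_{\X}})}(\pi_n^* E_n, \pi_n^* E'_n) \cong \varprojlim_{n} \op{Map}_{\SH(X^n_{\X})}(E_n, E'_n) \cong \op{Map}_{\SHe(\X)}(E, E'),
\]
where the middle equivalence is the levelwise fully faithfulness. The main technical obstacle I anticipate is the coherent identification of the functor $\pi^*$ on the limits with $\varprojlim_n \pi_n^*$; this requires exhibiting a morphism $N(\Delta_+)^{op} \times \Delta^1 \to N^D_{\bb}(\op{Nis-locSt})$ whose restrictions give the two \v{C}ech nerves and whose edge at $[-1]$ is $\pi$, and then applying $\SHe$ and arguing (as in \cref{smoothbasechangeext}) that the resulting limit diagram in $\op{Fun}(\Delta^1, \widehat{\op{Cat}_\infty})$ computes $\pi^*$ at the augmentation. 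Once this coherence is in place, the conclusion is immediate from the above computation.
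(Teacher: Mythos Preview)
Your proposal is correct, and the coherence concern you flag is handled precisely by the functoriality of $\Shext$ together with the argument pattern of \cref{smoothbasechangeext}: one builds the map $N(\Delta_+)^{op}\times\Delta^1\to N^D_{\bb}(\op{Nis-locSt})$ from the \v{C}ech nerves of $x$ and $\tilde{x}$, applies $\SHe$, and reads off $\pi^*$ at $[-1]$ as the limit of the $\pi_n^*$. After that, the mapping-space computation you wrote down goes through verbatim.

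However, the paper takes a genuinely different route. Rather than arguing directly with mapping spaces in the limit, it first uses \cref{smoothbasechangeext} to obtain the left adjoint $\pi_{\#}$ of $\pi^*$, then reduces full faithfulness to the statement that the counit $\pi_{\#}\pi^*\to\op{id}$ is an equivalence. Via the (smooth) projection formula this is checked on the unit object $1_{\X}$ alone, and that single morphism is shown to be an equivalence by pulling back along an atlas $x:X\to\X$ and invoking conservativity of $x^*$ (\cref{pullbackisconservative}) together with scheme-level homotopy invariance. Your approach is more self-contained in that it avoids constructing $\pi_{\#}$ and invoking the projection formula; the paper's approach, on the other hand, illustrates the ``reduce to the unit object and use conservativity of the atlas pullback'' template that it reuses verbatim for purity (\cref{purityext}) and homotopy purity (\cref{hompurityext}), so it buys uniformity across several of the six-functor verifications.
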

	
	\begin{proof}
		To show that $\pi_{\X}^*$ is fully faithful, we need to show that the unit transformation $u:\pi_{\#}\pi^* \to \op{id}$ is an equivalence. As $\pi_{\#}$ satisfies projection formula, we are reduced to showing $u(1_{\X}):\pi_{\#}\pi^*(1_{\X})\to 1_{\X}$ is an equivalence.\\
		
		Fixing the usual atlas $ x: X \to \X$, let $\pi_0: \A^1_X \to X$ be the projection map. As $\SH$ satisfies homotopy invariance on the level of schemes, we have an equivalence $u_0(1_X):\pi_{0\#}\pi_0^*(1_X) \to 1_X$. As $(-)_{\#}$ commutes with pullbacks, we get that pullback of $u(1_{\X})$ along $x^*$ is $u_0(1_X)$. As $x^*$ is conservative (\cref{pullbackisconservative}) and $u_0(1_X)$ is an equivalence, we get that $u(1_{\X})$ is an equivalence.
		
	\end{proof}
	
	\subsection{The natural transformation $\alpha_f$.}
	
	We construct the natural transformation $\alpha_f$ which is the extension of the natural transformation of the same notation on the level of schemes (\cite[Proposition 2.2.10]{Cisinski_2019}). We construct the natural transformation for a specific class of morphisms in $\op{Nis-locSt}$. 
	\begin{definition}
		A  representable morphism $ f: \X \to \Y$ in $\op{Nis-locSt}$ is \textit{compactifiable} if admits a factorization of the form $ \X \xrightarrow{j}\overline{\X}\xrightarrow{p}\Y$ where $j$ is an open immersion and $p$ is a proper representable morphism of algebraic stacks. 
	\end{definition}
	\begin{example}
		Open immersions and representable proper morphisms are compactifiable.\\
	\end{example}

	\begin{proposition}\label{alphaext}
		Let $f:\X \to \Y$ be a compactifiable morphism of algebraic stacks in $\op{Nis-locSt}$. Then there exists a natural transformation:
		\begin{equation} 
			\alpha_f: f_! \to f_* 
		\end{equation}
		which is an equivalence if $f$ is proper.
	\end{proposition}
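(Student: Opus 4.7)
The plan is to build $\alpha_f$ separately on the open and proper pieces of a compactification and then compose. Fix a factorization $f = p \circ j$ with $j : \X \hookrightarrow \overline{\X}$ an open immersion and $p : \overline{\X} \to \Y$ representable and proper. By \cref{lowershriekforopenproper} there are canonical equivalences $j_! \simeq j_\#$ and $p_! \simeq p_*$, so the proper piece is tautological: I would set $\alpha_p := \op{id}_{p_*}$. The task thus reduces to producing a natural transformation $\alpha_j : j_\# \to j_*$ on the open piece.

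For this, \cref{smoothbasechangeext} supplies the left adjoint $j_\#$ to $j^*$ (since $j$ is representable and smooth), while the right adjoint $j_*$ is available from \cref{Shstckdef}. I would define $\alpha_j$ as the natural transformation corresponding to the unit $\eta : \op{id} \to j^* j_*$ of $j^* \dashv j_*$ under the adjunction bijection $\op{Hom}(j_\#, j_*) \simeq \op{Hom}(\op{id}, j^* j_*)$; equivalently, as the composite
\begin{equation}
	j_\# \xrightarrow{j_\# \eta} j_\# j^* j_* \xrightarrow{\varepsilon j_*} j_*,
\end{equation}
where $\varepsilon : j_\# j^* \to \op{id}$ is the counit of $j_\# \dashv j^*$. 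One then obtains $\alpha_f$ by whiskering:
\begin{equation}
	\alpha_f : f_! \simeq p_! j_! \simeq p_* j_\# \xrightarrow{p_* \alpha_j} p_* j_* \simeq f_*.
\end{equation}
If $f$ itself is proper, the trivial factorization $f = f \circ \op{id}_\X$ gives $j_\# = j_* = \op{id}$, hence $\alpha_j = \op{id}$ and $\alpha_f = \op{id}_{f_*}$, which is an equivalence.

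The main difficulty I anticipate is $\infty$-categorical bookkeeping rather than conceptual depth. Producing the unit/counit data compatibly is most cleanly handled by descending to a smooth atlas: pick $x : X \to \X$ admitting Nisnevich-local sections, lift it compatibly to $\overline{\X}$ and $\Y$ as in the proof of \cref{smoothbasechangeext} (using \cref{covnonempty}), construct $\alpha_{f_n}$ on each level of the \v{C}ech nerve via the scheme-level construction of \cite[Proposition 2.2.10]{Cisinski_2019}, and assemble the levels into $\alpha_f$ using the limit presentation of $\SHe$ from \cref{Shstckdef} together with the fact that $(-)_\#$ and $(-)_*$ commute with smooth base change (\cref{smoothbasechangeext} and its dual in the proper case). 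This descent approach has the added benefit of making $\alpha_f$ manifestly independent of the chosen compactification, by reduction to the corresponding scheme-level statement comparing any two compactifications through a common refinement; in particular it shows directly that $\alpha_f$ is an equivalence for every proper $f$, not only for those given by the trivial factorization, since each $\alpha_{f_n}$ is an equivalence on the atlas level.
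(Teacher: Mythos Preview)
Your approach is essentially the same as the paper's. The paper constructs $j_\# \to j_*$ as the exchange transformation $\op{Ex}(\Delta_{*\#})$ associated to the cartesian square
\[
\begin{tikzcd}
\X \arrow[d,"\op{id}"] \arrow[r,"\op{id}"] & \X \arrow[d,"j"] \\
\X \arrow[r,"j"] & \overline{\X},
\end{tikzcd}
\]
and then sets $\alpha_f : f_! = p_* j_\# \to p_* j_* \cong f_*$. But this exchange transformation is precisely your composite $j_\# \xrightarrow{j_\# \eta} j_\# j^* j_* \xrightarrow{\varepsilon j_*} j_*$; you have simply unpacked the standard definition of $\op{Ex}(\Delta_{*\#})$ in terms of the unit of $j^* \dashv j_*$ and the counit of $j_\# \dashv j^*$.

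Your treatment of the proper case via the trivial factorization matches the paper's one-line claim, and your additional paragraph on descent to the atlas to address independence of the compactification goes beyond what the paper actually writes (the paper does not verify this), so that part is extra care rather than a different strategy.
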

	
	\begin{proof}
		The construction of $\alpha_f$ is similar to the construction on the level of schemes (\cite[Proposition 2.2.10]{Cisinski_2019}). Consider a factorization $(j,p)$ of the $f$. \\
		At first, we have a natural transformation \[j_{\#} \to j_*\] for any open immersion $j$. This follows because of the natural transfromation $\op{Ex}(\Delta_{*\#})$ (the dual of the smooth base change $\op{Ex}(\Delta^*_{\#})$) applied to the cartesian square
		\begin{equation}
			\begin{tikzcd}
				\X \arrow[d,"\op{id}"] \arrow[r,"\op{id}"] & \X \arrow[d,"j"] \\
				\X \arrow[r,"j"] & \overline{\X}.
			\end{tikzcd}
		\end{equation}
     Thus the natural transformation $\alpha_f$ is defined as 
		\[ \alpha_f: f_!=p_*j_{\#} \to p_*j_* \cong f_*.\]
		When $f$ is proper, the definition of compactifiable implies that $\alpha_f$ is an equivalence.
		
	\end{proof}

	\subsection{Homotopy purity.}
	We now prove the homotopy purity theorem of $\Shext(-)$. At first, we construct the natural transformation $\rho_f$ which is analog to the purity transformation on the level of schemes. We then prove the homotopy purity theorem using the deformation to the normal cone. Also as a corollary, we get an explicit description of the self equivalence $\op{Tw}_f$. \\
	\begin{proposition}(Purity)\label{purityext} Let $f: \X \to \Y$ to be smooth morphism separated of finite type, there exists a self equivalence $\op{Tw}_f: \SHe(\X) \to \SHe(\X)$ and an equivalence 
		\[ \op{Tw}_f \circ f^! \cong f^*. \]
	\end{proposition}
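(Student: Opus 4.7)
The strategy is to adapt the classical scheme-level proof of purity (\cite[Theorem 9.4.38]{Robalothesis}) to algebraic stacks, using the deformation-to-the-normal-cone construction which remains in $\op{Nis-locSt}$ by \cref{blownordefnisloc}. Since $f$ is representable smooth and separated of finite type, the relative tangent bundle $T_f$ is a vector bundle on $\X$, and its Thom space $\op{Th}(T_f) \in \SHe(\X)$ can be defined by descent from the Thom spaces of the pullbacks of $T_f$ along the \v{C}ech nerve of a smooth atlas. Invertibility of $\op{Th}(T_f)$ follows by descent from the scheme-level invertibility, since tensoring with a Thom space commutes with smooth pullback; one then defines $\op{Tw}_f := (-) \otimes \op{Th}(T_f)^{-1}$, a self-equivalence of $\SHe(\X)$.

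To produce the purity transformation $\rho_f \colon \op{Tw}_f \circ f^! \to f^*$, I would replicate the scheme-level construction geometrically via deformation to the normal cone. Since $f$ is separated, the diagonal $\Delta_f \colon \X \to \X \times_\Y \X$ is a closed immersion with normal bundle $T_f$, and the deformation space $D_\X(\X \times_\Y \X) \in \op{Nis-locSt}$ interpolates between $f$ and the zero section of $T_f$. The associated chain of natural transformations is built from (i) smooth and proper base change (\cref{smoothbasechangeext} and its proper analogue), (ii) the natural transformation $\alpha_j \colon j_! \to j_*$ for open immersions from \cref{alphaext}, and (iii) homotopy invariance (\cref{hominvext}). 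Each of these ingredients has already been extended with full $\infty$-categorical coherence to $\op{Nis-locSt}$ in the preceding subsections, which allows $\rho_f$ to be assembled as a natural transformation of $\infty$-functors. To verify that $\rho_f$ is an equivalence, it suffices by \cref{pullbackisconservative} to check this after pullback along a smooth atlas $x \colon X \to \X$ admitting Nisnevich-local sections; via base change this pullback reduces to the classical purity equivalence for the induced smooth morphism of schemes, which is known.

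The main obstacle is the coherent assembly of the deformation-to-the-normal-cone argument at the level of $\infty$-categories over the \v{C}ech nerve of an atlas. Concretely, one must realize the scheme-level purity equivalences on each level $X^n_\X \to Y^n_\Y$ of the atlas as part of a simplicial system of adjointable squares in $\widehat{\op{Cat}_\infty}$, so that passage to the limit (as in the proof of \cref{smoothbasechangeext}, using \cite[Corollary 4.7.4.18]{HA}) yields $\rho_f$ together with its invertibility on $\SHe(\X)$. Extracting this homotopy-coherent data from the scheme-level construction is the technical heart of the proof, although it is made feasible by the fact that all the constituent operations have already been promoted to $\infty$-functorial statements on $\op{Nis-locSt}$ in the earlier subsections.
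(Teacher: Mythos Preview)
Your approach is workable but takes a more circuitous route than the paper. The paper does \emph{not} use deformation to the normal cone to prove this proposition; instead it defines the Thom transformation abstractly as $\Sigma_f := p_{\#}\circ\delta_*$ (where $\delta:\X\to\X\times_{\Y}\X$ is the diagonal and $p$ the projection), and then constructs the purity transformation $\rho_f: f_{\#}\to f_!\circ\Sigma_f$ purely formally from the six-functor operations already established on $\op{Nis-locSt}$, following \cite[Section 2.4.20]{Cisinski_2019}. Since $p_{\#}$, $\delta_*$, $f_!$, base change, and $\alpha_f$ are all available as honest $\infty$-functors and natural transformations on $\op{Nis-locSt}$, the transformation $\rho_f$ exists directly at the stack level with no descent bookkeeping required. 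Checking that $\Sigma_f$ and $\rho_f$ are equivalences is then immediate from conservativity of pullback along an atlas (\cref{pullbackisconservative}) and the scheme-level result. The twist is $\op{Tw}_f:=\Sigma_f^{-1}$.

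The deformation to the normal cone and the identification $\Sigma_f(-)\cong -\otimes\op{Th}(N_f)$ are handled in a \emph{separate} subsequent proposition (\cref{hompurityext}), as a refinement rather than an ingredient of the purity equivalence itself. The upshot is that the ``main obstacle'' you identify---coherently assembling the deformation argument over the \v{C}ech nerve---simply does not arise in the paper's proof: once the six functors and their exchange transformations live on $\op{Nis-locSt}$, purity is a one-line formal consequence plus a conservativity check. Your geometric route would also succeed, but it front-loads work that the paper defers and partially avoids.
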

	
	\begin{proof}
		Let $ f: \X \to \Y$ be a smooth morphism separated of finite type. Let $ \delta: \X \to \X \times_{\Y} \X$ be the diagonal morphism and $ p: \X \times_{\Y} \X \to \X$ be the projection map. Then we denote the \textit{Thom transformation} \[ \Sigma_f:= p_{\#} \circ \delta_*. \]
		By the base change theorem with respect to $f_!$ (\cref{thmmain2}) and the fact that $\alpha_f$ is an equivalence for proper morphisms (\cref{alphaext}), we can construct the transformation as one does on the level of schemes (\cite[Section 2.4.20]{Cisinski_2019})
		\[ \rho_f: f_{\#} \to f_! \circ \Sigma_f.  \]
		Let us check that $\Sigma_f$ and $\rho_f$ are equivalences. When base changed to the level of schemes by choosing an atlas, these natural transformations are equivalences (\cite[Theorem 9.4.37]{Robalothesis}). As the pullback functor along an atlas is conservative (\cref{pullbackisconservative}), this implies that these natural transformations are equivalences.\\
	\end{proof}
	
	The above proposition gives us the relation between $f^*$ and $f^!$ via the self equivalence $\op{Tw}_f:= (\Sigma_f)^{-1}$. We want to have a precise description of the self equivalence $\Sigma_f$ as one gets on the level of schemes via deformation to normal cone. We prove a similar result in the context of $\Shext(-)$.\\
	At first we define the notion of smooth closed pairs in this context as one does on the level of schemes.
	
	\begin{definition}
		Let $\Y$ be an algebraic stack in $\Nst$. A \textit{smooth closed pair} over $\Y$ is a pair $(\X,\Z)$ where: 
		\begin{enumerate}
			\item $\X,\Z$ are stacks over $\Y$ in $\Nst$ such that the projection maps to $\Y$ are smooth.
			\item $\Z \hookrightarrow \X$ is a closed substack of $\X$.
		\end{enumerate}
		A morphism of smooth closed pairs $(\X,\Z) \to (\X',\Z')$ is a representable morphism of algebraic stacks $ f: \X \to \X'$ such that $f^{-1}(\Z') =\Z$ as a set. 
	\end{definition}
	
	\begin{notation}\label{puritynot}
		\begin{enumerate}
			\item For a smooth closed pair $(\X,\Z)$ over $\Y$, we denote  \begin{equation}
				\frac{\X}{\X-\Z}:= p_{\#}(i_{\Z*}(1_{\X})) \in \Shext(\Y) 
			\end{equation}
			where $ p: \X \to \Y$. \\
			
			\item Let $p:\V \to \Y$ be a vector bundle over the algebraic stack $\Y$ where $\V,\Y \in \Nst$. Let $s: \Y \to \V$ be the zero section. Then the pair is $(\V,\Y)$ a smooth closed pair where $\Y$ is realized as a closed substack of $\V$ via the zero section. 
		\end{enumerate}
	\end{notation}
	\begin{remark}
		A morphism of smooth closed pairs $ (\X,\Z) \to (\X',\Z')$ induces a map \[ \frac{\X}{\X-\Z} \to \frac{\X'}{\X'-\Z'} ~~~(\text{see}~\cite[2.4.32]{Cisinski_2019}). \]
	\end{remark}
	We now state and prove the homotopy purity theorem. Recall that for a closed substack $\Z \hookrightarrow \X$, we denote the normal cone by $N_{\Z}(\X)$ and 
	the deformation to the normal cone by $D_{\Z}(\X)$ (see \cref{blowupdeforstacknotation}).
	\begin{proposition}\label{hompurityext}
		Let $(\X,\Z)$ be a smooth closed pair over $\Y$.
		Then the canonical morphisms of smooth closed pairs 
		\begin{equation}
			(\X,\Z) \xleftarrow{p_0} (D_{\Z}(\X),\A^1_{\Z}) \xrightarrow{p_1} (N_{\Z}\X,\Z) 
		\end{equation}
		induces an equivalence 
		\begin{equation}
			\frac{\X}{\X-\Z} \cong \frac{D_{\Z}\X}{D_{\Z}\X - \A^1_{\Z}} \cong \frac{N_{\Z}\X}{N_{\Z}\X-\Z} 
		\end{equation}
		
	\end{proposition}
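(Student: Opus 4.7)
The plan is to reduce the assertion to the classical homotopy purity theorem on schemes via the conservativity of pullback along an atlas (\cref{pullbackisconservative}), combined with the compatibility of the constructions involved with smooth and proper base change.

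First, fix an atlas $y: Y \to \Y$ admitting Nisnevich-local sections, so that $y^*: \SHe(\Y) \to \SH(Y)$ is conservative. Assuming (as is implicit in the definition $\frac{\X}{\X-\Z} := p_{\#}i_{\Z *}(1_{\X})$ in \cref{puritynot}) that the projection $p: \X \to \Y$ is representable, the base change $X := \X \times_{\Y} Y$ is a scheme, and $Z := \Z \times_{\Y} Y$ is a smooth closed subscheme of $X$. By the compatibility of the blowup and deformation to the normal cone with pullback recorded in \cref{blowupdeforstacknotation}, the base changes of $D_{\Z}\X$ and $N_{\Z}\X$ along $y$ coincide with $D_Z X$ and $N_Z X$ respectively, and the maps $p_0, p_1$ pull back to the analogous scheme-level maps of the deformation diagram.

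Next, combining the smooth base change formula \cref{smoothbasechangeext} for $p_{\#}$ with the proper base change for the closed immersion $i_{\Z}$ (which follows from the base change for $(-)_{!}$ in \cref{thmmain2} together with the identification $i_{!} \cong i_{*}$ via \cref{alphaext}), I obtain canonical equivalences identifying $y^*$ applied to $\frac{\X}{\X-\Z}$, $\frac{D_{\Z}\X}{D_{\Z}\X - \A^1_{\Z}}$ and $\frac{N_{\Z}\X}{N_{\Z}\X-\Z}$ with the scheme-level objects $\frac{X}{X-Z}$, $\frac{D_Z X}{D_Z X - \A^1_Z}$ and $\frac{N_Z X}{N_Z X - Z}$. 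These equivalences are natural in morphisms of smooth closed pairs, so the morphisms on the stack level induced by $p_0$ and $p_1$ pull back along $y$ to the canonical morphisms of the scheme-level deformation diagram. The classical homotopy purity theorem on schemes (see e.g. \cite[Theorem 2.4.35]{Cisinski_2019}) implies that the latter are equivalences in $\SH(Y)$, and conservativity of $y^*$ then forces the original morphisms in $\SHe(\Y)$ to be equivalences.

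The main obstacle is the naturality step: one must verify that the morphism of smooth closed pairs at the stack level pulls back, after passing through the compatibility isomorphisms above, to precisely the canonical morphism at the scheme level. This reduces to tracking how units and counits of the $(p_{\#}, p^*)$ and $(i^*, i_{*})$ adjunctions interact with the base change exchange morphisms --- a routine but somewhat involved bookkeeping argument resting on the functoriality established in \cref{thmmain} and \cref{thmmain2}.
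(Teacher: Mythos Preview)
Your proposal is correct and follows essentially the same approach as the paper: choose an atlas $y:Y\to\Y$, base change the deformation diagram to schemes, invoke the scheme-level homotopy purity theorem, and conclude via conservativity of $y^*$ (\cref{pullbackisconservative}). If anything, your version is more careful than the paper's in spelling out which base change isomorphisms are being used and in flagging the naturality verification, which the paper absorbs into the single remark that ``the construction of morphisms $p_{0*}$ commutes with pullbacks.''
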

	
	\begin{proof}
		By \cref{blownordefnisloc}, we know that the algebraic stacks $D_{\Z}(\X)$ and $N_{\Z}(\X)$ are in $\op{Nis-locSt}$ when $\X \in \op{Nis-locSt}$.\\
		The morphisms of smooth of closed pairs in $\Nst$ 
		\begin{equation}
			(\X,\Z) \xleftarrow{p_{0}} (D_{\Z}(\X),\A^1_{\Z}) \xrightarrow{p_{1}} (N_{\Z}\X,\Z) 
		\end{equation}
		on base changed to an atlas $y: Y \to \Y$ yields us morphisms 
		\begin{equation}
			(X,Z) \xleftarrow{p_{0,0}} (D_Z(X),\A^1_Z) \xrightarrow{p_{1,0}} (N_Z(X),Z_\Z) 
		\end{equation}
		By \cref{dfm}, we have an equivalence:
		\begin{equation}
			\frac{X}{X-Z} \xleftarrow[\cong]{p_{{0,0}*}} \frac{D_Z(X)_{\X}}{D_{Z}(X)-\A^1_Z} \xrightarrow[\cong]{p_{{1,0}*}}
			\frac{N_{Z}(X)}{N_{Z}(X)-Z} 
		\end{equation}
		The construction of morphisms $p_{0*}$ commutes with pullbacks. Thus we have $y*^*(p_{0*}) \cong p_{{0,0}*}$ and $y^*(p_{1*}) \cong p_{{0,1}*}$. By \cite[Theorem 9.4.34]{Robalothesis}, we see that $p_{{0,0}*}$ and $p_{{1,0}*}$ are equivalences. As $y^*$ is conservative (\cref{pullbackisconservative}), we get that $p_{0*}$ and $p_{1*}$ are equivalences.	
	\end{proof}
	
	The above equivalence gives us an explicit description of the Thom transformation $\Sigma_f$ in terms of the Thom space of the normal bundle of $f$. 
	\begin{notation}
		Let $f: \X \to \Y$ be a smooth representable morphism of algebraic stacks. Then the Thom space of the normal bundle is  defined as \[  \op{Th}(N_f):= \frac{N_{\X}(\X \times_{\Y} \X)} {{N_{\X}(\X \times_{\Y} \X) -\Y}}.  \]
		This definition is analog to the one defined on the level of schemes (\cite[Definition 9.4.27]{Robalothesis}).
	\end{notation}

	\begin{corollary}
		Let $f: \X \to \Y$ be separated of finite type, smooth representable morphism of algebraic stacks. Consider the commutative diagram of algebraic stacks 
		\begin{equation}
			\begin{tikzcd}
				\X \arrow[dr,"\delta"] & {} & {}\\
				{} & \X \times_{\Y} \X \arrow[r,"p"] \arrow[d,"q"] & \X \arrow[d,"f"]\\
				{} & \X \arrow[r,"f"] & \Y
			\end{tikzcd}
		\end{equation}
		Then we have:
		\begin{equation}
			\Sigma_f(-):= p_{\#}\delta_*(-) \cong -\otimes \op{Th}(N_f).
		\end{equation}

	\end{corollary}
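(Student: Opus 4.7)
The strategy is to first identify $\Sigma_f(1_\X)$ with $\op{Th}(N_f)$ via homotopy purity, and then to propagate this identification to arbitrary $E$ using two projection formulas. Since $p \circ \delta = \op{id}_\X$, for any $E \in \SHe(\X)$ we have $\delta^* p^* E \simeq E$. Because $\delta$ is a closed immersion (hence representable, proper, and of finite type), we have $\delta_* \simeq \delta_!$ by \cref{lowershriekforopenproper}, and the projection formula of \cref{thmmain2} applied to $\delta_!$ yields
\begin{equation*}
\delta_*(E) \;\simeq\; \delta_*(1_\X \otimes \delta^* p^* E) \;\simeq\; \delta_*(1_\X) \otimes p^* E.
\end{equation*}
Applying $p_\#$ together with the smooth projection formula for $p_\#$ (discussed below) then gives
\begin{equation*}
\Sigma_f(E) \;=\; p_\# \delta_*(E) \;\simeq\; p_\#\bigl(\delta_*(1_\X) \otimes p^* E\bigr) \;\simeq\; p_\# \delta_*(1_\X) \otimes E \;=\; \Sigma_f(1_\X) \otimes E.
\end{equation*}

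It remains to identify $\Sigma_f(1_\X)$ with $\op{Th}(N_f)$. The pair $(\X \times_\Y \X, \X)$, with $\X$ embedded via $\delta$ and structure morphism $p$, is a smooth closed pair over $\X$: the diagonal $\delta$ is a closed immersion because $f$ is representable and separated, and $p$ is smooth because $f$ is. By the definition introduced in \cref{puritynot},
\begin{equation*}
\Sigma_f(1_\X) \;=\; p_\# \delta_*(1_\X) \;=\; \frac{\X \times_\Y \X}{\X \times_\Y \X - \X}.
\end{equation*}
Applying the homotopy purity theorem \cref{hompurityext} to this pair delivers the equivalences
\begin{equation*}
\frac{\X \times_\Y \X}{\X \times_\Y \X - \X} \;\simeq\; \frac{N_\X(\X \times_\Y \X)}{N_\X(\X \times_\Y \X) - \X} \;=\; \op{Th}(N_f),
\end{equation*}
which, combined with the symmetry of $\otimes$ from the first step, completes the proof.

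The one point that will require care is the smooth projection formula for $p_\#$ at the stack level: although \cref{smoothbasechangeext} constructs $p_\#$ and establishes smooth base change, the projection formula is not recorded there. I would derive it by descent. Pick an atlas $x \colon X \to \X$ admitting Nisnevich-local sections; on every stage of the corresponding \v{C}ech nerve the scheme-level smooth projection formula holds, and smooth pullback commutes with both $\otimes$ and with $p_\#$ (the latter by smooth base change). The equivalence therefore descends to $\X$ using the conservativity of $x^*$ from \cref{pullbackisconservative}. Once this is in place, both steps of the argument above are formal.
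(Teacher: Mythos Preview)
Your proposal is correct and follows essentially the same route as the paper's proof: rewrite $E \simeq \delta^* p^* E$, apply the proper projection formula for $\delta_*$, then the smooth projection formula for $p_\#$, and finally invoke homotopy purity (\cref{hompurityext}) to identify $p_\#\delta_*(1_\X)$ with $\op{Th}(N_f)$. Your version is in fact more careful than the paper's, which simply cites ``projection formula'' twice without distinguishing the proper and smooth cases and does not address the extension of the smooth projection formula to the stack level; your descent argument via conservativity of $x^*$ is the right way to fill that gap.
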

	\begin{proof}
		The proof is exactly is same as one does for $\SH$ (\cite[Eq. 9.4.88]{Robalothesis}), we have 
		\begin{align*}
			\Sigma_f(-) & = p_{\#}\delta_*(-)\\
			& =p_{\#}\delta_*((1_{\Y} \otimes \delta^*p^*(-)) \\
			& \cong p_{\#}(\delta_*(1_{\Y}) \otimes p^*(-)) ~~ (\text{projection formula})\\
			& \cong p_{\#}\delta_*(1_{\Y})\otimes(-) (\text{projection formula}) \\
			& \cong \op{Th}(N_f) \otimes (-) (\cref{hompurityext}) \\
		\end{align*}
	\end{proof}
	
	\begin{remark}
		It is possible to define the Tate object $1_{\X}(1) \in \Shext(\X)$ as $\Omega^2(K)$ where $K$ is defined as cofiber of the map $ 1_{\Y} \to p_{\#}p^*1_{\X}$ and $\Omega$ is the inverse of the suspension functor on the stable $\infty$-category $\Shext(\X)$. Then for a smooth representable morphism $f: \Y \to \X$, we have  \[ \op{Th}(N_f) \cong 1_{\Y}(d)[2d] \]
		where $d$ is the relative dimension of the morphism $f$ (and $1_X(d)$ is the $d$th iterated tensor product of $1_X(1)$). This follows from the fact that such a description holds on the level of schemes. Thus combining the description of the Thom space of normal bundle of $f$ with the purity isomorphism, we get that: 
		
		\[ f_{\#}(-) \cong f_!(1_{\X}(d)[2d] \otimes (-)). \]
		
	\end{remark}
	
	\subsection{Summarizing the results.}
	
	In this subsection, we summarize the results that we have proved in the previous sections in a single theorem.  
	
	\begin{theorem}\label{thmsixopext}
		The stable homotopy functor $\Sho: N(\Schf)^{op} \to \op{CAlg}(\op{Pr}^L_{stb})$ extends to a functor \begin{equation}
			\Shext: \Nst^{op} \to \op{CAlg}(\op{Pr}^L_{stb})
		\end{equation}
		such that for a morphism $f: \Y \to \X$ we have the following functors:
		\begin{enumerate}
			\item $f^*:  \Shext(\X) \to \Shext(\Y)$.
			\item $f_*: \SHe(\Y) \to \SHe(\X)$.
			\item $f_!: \SHe(\X) \to \SHe(\Y)$ when $f$ is representable, separated and of finite type.
			\item $f^!: \SHe(\Y) \to \SHe(\X)$ when $f$ is representable, separated and of finite type.
			\item $ - \otimes -: \SHe(\X) \times \SHe(\X)  \to \SHe(\X)$.
			\item  $ \op{Hom}_{\SHe(\X)}(-,-): \SHe(\X) \times \SHe(\X) \to \Shext(\X)$.
			
		\end{enumerate}
		The functor $\Shext$ along with the functors $(f^*,f_*,f_!,f^!,-\otimes-,\op{Hom}(-,-))$ satisfy the following properties:
		\begin{enumerate}
			\item (Monoidality) $f^*$ is monoidal, i.e. there exists an equivalence
			\begin{equation}
				f^*(E \otimes E') \cong f^*(E)\otimes f^*(E')
			\end{equation}
			for $E,E'\in \Shext(\X)$
			\item (\textit{Projection Formula}) For  $ E,E' \in \SHe(\X)$ and $ B \in \SHe(\Y)$, we have the following equivalences:
			\begin{enumerate}
				\item  \begin{equation}
					f_!(B \otimes f^*(E)) \cong (f_!B \otimes E)
				\end{equation}
				\item \begin{equation}
					f^!\op{Hom}_{\SHe(X)}(E,E') \cong \op{Hom}_{\SHe(Y)}(f^*E,f^!E')
				\end{equation}
			\end{enumerate}
			\item (\textit{Base Change}) If 
			\begin{equation}
				\begin{tikzcd}
					\X' \arrow[r,"f'"] \arrow[d,"g'"] & \Y' \arrow[d,"g"]\\
					\X \arrow[r,"f"] & \Y
				\end{tikzcd}
			\end{equation}
			is a cartesian square of base schemes with $g$ being representable, separated and of finite type, we have the following equivalences:
			\begin{enumerate}
				\item \begin{equation}
					f'^* g'_! \cong g_!f^*  
				\end{equation}
				\item
				\begin{equation}
					f'_*g'^{!} \cong g^!f_*
				\end{equation}
			\end{enumerate}
			\item (Proper pushforward) If $f:\X \to \Y$ is a compactifiable morphism, then there exists a natural transformation:
			\begin{equation} 
				\alpha_f: f_! \to f_* 
			\end{equation}
			which is an equivalence if $f$ is proper.
			
			\item (Purity) For $f$ to be  representable, smooth,  and separated of finite type, there exists a self equivalence $\op{Tw}_f$ and an equivalence 
			\[ \op{Tw}_f \circ f^! \cong f^* \]
			\item (Localization) For $i: \U \to X$ to be an open immersion and $j: \Z:= \X -\U \to \X$ to be the closed immersion from the complement of $\U$, we have the cofiber sequences:
			\begin{enumerate}
				\item \begin{equation}
					j_!j^! \to \op{id} \to i_*i^* 
				\end{equation}
				\item \begin{equation}
					i_!i^! \to  \op{id} \to j_*j^*
				\end{equation}
			\end{enumerate}
			\item (Homotopy Invariance) Let $\pi: \A^1_{\X} \to \X$ be the projection map. Then $\pi^*$ is fully faithful.
			\item $\Shext$ satisfies descent with respect to Nisnevich-local sections in $\Nst$. 
			
		\end{enumerate}
	\end{theorem}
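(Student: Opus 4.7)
The plan is to assemble Theorem \ref{thmsixopext} as a compilation of the results proved in Sections 2, 3, and 4; essentially every item is already established, and the task is to collect references in the correct logical order and verify that the six functors we have constructed match up between the sections.

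First I would record the construction of $\Shext$ itself: this is exactly \cref{Shstckdef}, obtained by applying the extension principle \cref{thmmain} twice (once from $\Schf$ to quasi-separated algebraic spaces, once from algebraic spaces to $\op{Nis-locSt}$), noting that $\op{CAlg}(\op{Pr}^L_{stb})$ admits small limits. The functor $f^*$ is then $\Shext(f)$ on underlying $\infty$-categories; its monoidality is automatic since $\Shext$ lands in $\op{CAlg}(\op{Pr}^L_{stb})$. The right adjoint $f_*$ exists by the adjoint functor theorem \cite[Corollary 5.5.2.9]{HTT} because $f^*$ preserves colimits. The internal hom $\op{Hom}_{\SHe(\X)}(-,-)$ exists by \cref{closedunderlim}, and the tensor product comes from the symmetric monoidal structure packaged into $\Shext(\X)$. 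This covers the existence of all six functors and item (1), as well as Nisnevich descent (item 8) which is the sheaf property built into $\Shext$ by \cref{thmmain}.

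Next I would handle the exceptional functors and the two formulas $(2)$ and $(3)$ simultaneously: by \cref{thmmain2}, the assignments $f \mapsto f_!$ and $f \mapsto f^!$ are defined for representable, separated morphisms of finite type, and the enhanced operation map $\op{EO}(\Shext)$ encodes both the projection formula $f_!(B \otimes f^*(E)) \cong f_!(B) \otimes E$ and the base change equivalence $f'^* g'_! \cong g_! f^*$. The dual statements in $(2)(b)$ and $(3)(b)$ for $\op{Hom}$ and $f^!$ are obtained by passing to right adjoints via the equivalence $(\op{Pr}^L_{stb})^{op} \cong \op{Pr}^R_{stb}$ exactly as in the remark following \cref{thmmain2}. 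The natural transformation $\alpha_f: f_! \to f_*$ for compactifiable morphisms and the fact that it is an equivalence on proper morphisms are \cref{alphaext}, giving item (4). Item (5) is \cref{purityext}, item (6) is \cref{localizationext}, and item (7) is \cref{hominvext}.

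The main thing to verify, rather than prove, is that the various constructions agree. Specifically, $f_!$ built through the enhanced operation map must match $f_{\#}$ for open immersions and $f_*$ for representable proper morphisms in order for $\alpha_f$ to make sense and for the localization cofiber sequences to read correctly; this is recorded in \cref{lowershriekforopenproper}. Similarly, the left adjoint $f_{\#}$ for smooth representable $f$ produced in \cref{smoothbasechangeext} must agree with the $f_!$ obtained from \cref{thmmain2} up to the twist $\op{Tw}_f$, which is precisely the content of purity \cref{purityext}. Once these compatibilities are invoked, every clause of \cref{thmsixopext} is a direct translation of a previously established proposition, and the proof amounts to assembling the correct citations.
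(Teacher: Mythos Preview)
Your proposal is correct and takes essentially the same approach as the paper: the paper's own proof is a single sentence citing \cref{thmmain}, \cref{thmmain2}, \cref{localizationext}, \cref{hominvext}, \cref{alphaext}, and \cref{purityext}, so the theorem is by design a summary of previously established results. Your write-up is in fact more thorough than the paper's, since you also record the auxiliary references (\cref{Shstckdef}, \cref{closedunderlim}, \cref{lowershriekforopenproper}, \cref{smoothbasechangeext}) and spell out the compatibility checks between the different constructions of the six functors, all of which are implicit in the paper's one-line proof.
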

	\begin{proof}
		The theorem follows from \cref{thmmain}, \cref{thmmain2}, \cref{localizationext}, \cref{hominvext}, \cref{alphaext} and \cref{purityext}. 
	\end{proof}

	\bibliography{thesisarchive.bib}

\end{document}